\def \W {\mathcal{W}}
\def \N {\mathcal{N}}
\def \PP {\mathcal{P}}
\def \XX {\mathcal{X}}
\def \dist {{\rm dist}}
\DeclareMathOperator{\Rm}{Rm}
\DeclareMathOperator{\Ric}{Ric}
\DeclareMathOperator{\Vol}{Vol}
\DeclareMathOperator{\tr}{tr}
\newcommand*{\rom}[1]{\rm {\expandafter\@slowromancap\romannumeral #1@}}
\def \tg {\tilde{g}}
\def \bg {\overline{g}}
\def \R {\mathfrak{R}}
\def \V {\mathcal{V}}
\def \div {{\rm div}}
\numberwithin{equation}{section}
\newtheorem{Theorem}{Theorem}[section]
\newtheorem{Proposition}[Theorem]{Proposition}
\newtheorem{Lemma}[Theorem]{Lemma}
\newtheorem{Corollary}[Theorem]{Corollary}
\theoremstyle{definition}
\title{On Ricci flows with closed and smooth tangent flows}
\author{Pak-Yeung Chan, Zilu Ma, and Yongjia Zhang}
\begin{document}

\maketitle

\begin{abstract}
    In this paper, we consider Ricci flows admitting closed and smooth tangent flows in the sense of Bamler \cite{Bam20c}. The tangent flow in question can be either a tangent flow at infinity for an ancient Ricci flow, or a tangent flow at a singular point for a Ricci flow developing a finite-time singularity. Among other things, we prove: (1) that in these cases the tangent flow must be unique, 
    (2) that if a Ricci flow with finite-time singularity has a closed singularity model, then the singularity is of Type I and the singularity model is the tangent flow at the singular point; this answers a question proposed in \cite{RFV3}, (3) a dichotomy theorem that characterizes ancient Ricci flows admitting a closed and smooth backward sequential limit.
\end{abstract}
\setcounter{tocdepth}{1}
\tableofcontents

\section{Introduction}

In \cite{Bam20a}---\cite{Bam20c}, Bamler has established a theory for the weak limits ($\mathbb{F}$-limits) of Ricci flows on closed manifolds. These limit spaces are metric flows called \emph{singular spaces}. The theory of singular spaces bears many similarities with several other geometric analysis theories such as the minimal surface, the Brakke flow, and the Cheeger-Colding theory. Among many other things, one important result proved in \cite{Bam20c} is the codimension-$4$ theorem for the singular space. Similar to the theorem of Cheeger-Naber \cite{CN15}, the singularity of a singular space also has parabolic (space-time) Minkowski codimension no smaller than $4$ \cite[Theorem 2.3]{Bam20c}.

Another important theme in the study of Bamler's singular space is the uniqueness of \emph{tangent flow}, as in the cases of the mean curvature flow, the minimal surface, and the Cheeger-Colding theory (in the latter two cases, the counterpart of tangent flow is called \emph{tangent cone}). In the field of minimal surface, White \cite{W83} proved that a $2$-dimensional minimizing current has unique tangent cone everywhere, Allard-Almgren \cite{AA81} proved the uniqueness of tangent cone under some integrability condition for the cross section, Simon \cite{S83} proved that if a tangent cone at a point on a minimal surface is smooth, then it is the unique tangent cone at that point. In Cheeger-Colding theory, Colding-Minicozzi \cite{CM14} proved that a tangent cone at a point on a 
noncollapsed
Ricci limit space is the unique tangent cone at that point if the cone has 
a smooth
cross section. In the mean curvature flow, Colding-Minicozzi \cite{CM15} proved that if a tangent flow at a point is a cylinder, then it is the unique tangent flow at that point. We would like to draw the reader's attention to a recent post of Colding-Minicozzi \cite{CM21}. Their method, as is claimed in that paper, can be applied to show that cylindrical singularities in the Ricci flow are isolated singularities, and they will prove that this type of blow-up limits must be unique in a forthcoming paper. 

In all these fields, it is also interesting to consider the tangent cone/flow at infinity of a noncompact minimal surface/Ricci limit space or an ancient mean curvature flow/singular space. For instance, Cheeger-Tian \cite{CT94} proved the uniqueness of tangent cone at infinity for Ricci flat manifolds with Euclidean volume growth under certain curvature
and integrability assumptions. This result was later strengthened by
Colding-Minicozzi \cite{CM14}. For an ancient singular space, Bamler \cite{Bam20c} has considered the tangent flow at infinity and has shown that it is, as in the case of the tangent flow at one point, a shrinking metric soliton. The authors \cite{CMZ21} proved that if Bamler's tangent flow at infinity is smooth, then it is not different from Perelman's asymptotic shrinking gradient Ricci soliton.

In this article, we shall study both the tangent flow at infinity for an ancient Ricci flow and the tangent flow at a singular point for a Ricci flow developing a finite-time singularity. One of our main results is that, in either of the two cases above, if the tangent flow is smooth and closed, then this tangent flow is the unique one, and hence independent of the choice of the scaling factors or the base point (in the case of a Ricci flow forming a finite-time singularity, the base point should really be understood as the choice of the \emph{singular conjugate heat kernel}; see Section 2 for the definition).

Let us consider a metric flow $\displaystyle\left(\XX,\mathfrak{t},(\dist_t)_{t\in I},(\nu_{x\,|\, s})_{x\in \XX,s\in I,s\leq\mathfrak{t}(x)}\right)$  over $I$ induced by a smooth Ricci flow $(M^n,g(t))_{t\in I}$ (see \cite[Definition 3.2]{Bam20b}), where $M$ is a closed manifold and $I\subset \mathbb{R}$ is an interval. For any $\lambda>0$, we shall use the notation $\XX^{0,\lambda}$ to represent the scaled metric flow:
$$\XX^{0,\lambda}=\left(\XX,\lambda^{2}\mathfrak{t},(\lambda\dist_{\lambda^{-2}t})_{t\in \lambda^2 I},(\nu_{x\,|\, \lambda^{-2}s})_{x\in \XX,s\in \lambda^2 I,s\leq\mathfrak{t}(x)}\right),\quad t\in \lambda^2 I.$$
\begin{enumerate}[(1)]
    \item If $(M,g(t))$ is ancient, then we assume that $I=(-\infty,0]$ and hence $\XX=M^n\times(-\infty,0]$. Let us fix a point $(p_0,0)\in M\times\{0\}=\XX_0$ and let $\mu_t:=\nu_{p_0,0\,|\,t}$ be the conjugate heat kernel based at $(p_0,0)$. By Bamler's compactness theorems \cite{Bam20b}, for any sequence $\{\tau_i\}_{i=1}^\infty$ with $\tau_i\nearrow\infty$, we have
    \begin{eqnarray}\label{the_F_convergence}
\left(\XX^{0,(\tau_i)^{-\frac{1}{2}}},\left(\mu_{\tau_it}\right)_{t\in(-\infty,0]}\right)
\xrightarrow{\makebox[1cm]{$\mathbb{F}$}}
\left(\XX^\infty,(\mu^\infty_t)_{t\in -(\infty,0)}\right)
\end{eqnarray}
after passing to a subsequence. Here $\XX^\infty$ is a metric flow over $(-\infty,0]$. If the original ancient solution $(M,g(t))$ has uniformly bounded Nash entropy, that is, 
\begin{eqnarray}\label{Nash_bound}
\text{there exist $(p,t)\in M^n\times(-\infty,0]$ and  $Y<\infty$ such that}\quad \N_{p,t}(\tau)\geq -Y\text{ for all }\tau>0,
\end{eqnarray}
then the singular set of $\XX^\infty$ has parabolic Minkowski codimension no smaller than $4$, and $\left(\XX^\infty,(\mu^\infty_t)_{t\in -(\infty,0)}\right)$ is a metric soliton; see \cite{Bam20c}.
\item If $(M,g(t))$ has finite-time singularity, then we assume that $I=[-T,0)$ and $t=0$ is the singular time. In this case, the metric flow $\XX$ is not defined at $t=0$. We will let $(\mu_t)_{t\in I}$ be a singular conjugate heat kernel based at $t=0$ as defined in \cite{MM15}. By Bamler's compactness theorem \cite{Bam20b} again, for any sequence $\{\tau_i\}_{i=1}^\infty$ with $\tau_i\searrow 0$, we also have that (\ref{the_F_convergence}) holds after passing to a subsequence. In this case, the sequence is automatically noncollapsed, and the limit $\left(\XX^\infty,(\mu^\infty_t)_{t\in -(\infty,0)}\right)$ is a metric soliton whose singular set has parabolic Minkowski codimension no smaller than $4$ \cite{Bam20c}.
\end{enumerate}

Our first main results are:

\begin{Theorem}[Uniqueness of smooth and closed tangent flow at infinity]\label{Thm_main}
Let $(M^n,g_t)_{t\in(-\infty,0]}$ be an ancient solution satisfying (\ref{Nash_bound}), where $M$ is a closed manifold. Assume that there is a sequence of scaling factors $\{\tau_i\}_{i=1}^\infty$ with $\tau_i\nearrow\infty$, such that the limit metric flow $\XX^\infty$ in (\ref{the_F_convergence}) is a smooth Ricci flow on a closed manifold. Then for any sequence $\{\tau_i'\}_{i=1}^\infty$ with $\tau_i'\nearrow\infty$ and for any $(p_1,t_1)\in M\times(-\infty,0]$, we have
\[
\left((M,\tau_i'^{-1}g_{\tau_i' t})_{t\in -(\infty,0]},(\nu_{p_1,t_1|\tau_i' t})_{t\in -(\infty,0]}\right)
\xrightarrow{\makebox[1cm]{$\mathbb{F}$}}
\left(\XX^\infty,(\mu^\infty_t)_{t\in -(\infty,0)}\right).
\]
In other words, the tangent flow at infinity is independent of the sequence of scaling factors and the base point.
\end{Theorem}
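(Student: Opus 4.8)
The plan is to first upgrade the $\mathbb{F}$-convergence along $\{\tau_i\}$ to smooth Cheeger--Gromov convergence — which identifies $\XX^\infty$ with a fixed compact shrinking gradient Ricci soliton — and then to run a \L{}ojasiewicz--Simon argument for the rescaled Ricci flow near that soliton, in the spirit of Sun--Wang's treatment of finite-time singularities, in order to promote the subsequential convergence to genuine convergence as the scale tends to infinity. \emph{Step 1 (smooth convergence).} Since $\XX^\infty$ is a smooth Ricci flow on a closed manifold, its regular part $\R$ is all of $\XX^\infty$, so by Bamler's partial regularity theory \cite{Bam20b, Bam20c} the convergence \eqref{the_F_convergence} is smooth on $\R=\XX^\infty$; as $\XX^\infty$ is compact in space and the sequence is noncollapsed (by \eqref{Nash_bound} and Bamler's estimates), $M$ is diffeomorphic to the underlying manifold $\bar M$ of $\XX^\infty$ for large $i$, and, writing $\XX^\infty=(\bar M,\bar g_t)_{t\in(-\infty,0)}$, there are diffeomorphisms $\phi_i$ with $\tau_i^{-1}\phi_i^*g_{\tau_it}\to\bar g_t$ in $C^\infty_{\mathrm{loc}}$ and $\phi_i^*\mu_{\tau_it}\to\mu^\infty_t$. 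Being a metric soliton, $(\bar M,\bar g):=(\bar M,\bar g_{-1})$ is a compact shrinking gradient Ricci soliton $\Ric_{\bar g}+\nabla^2\bar f=\tfrac12\bar g$, and $(\mu^\infty_t)$ is its canonical conjugate heat kernel measure. I would then pass to the backward rescaled Ricci flow $\hat g(\sigma):=e^{-\sigma}g(-e^{\sigma})$, $\sigma\in\mathbb{R}$, which solves $\partial_\sigma\hat g=2\Ric(\hat g)-\hat g$ and whose fixed points modulo diffeomorphism are exactly the shrinking gradient solitons; the preceding gives $\hat g(\sigma_i)\to[\bar g]$ in $C^\infty$ along $\sigma_i:=\log\tau_i\to\infty$. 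By the scale-invariance of Perelman's entropy $\bW(g):=\inf_{f,\tau}\W(g,f,\tau)$ (his $\nu$-entropy) and its monotonicity under Ricci flow, $\sigma\mapsto\bW(\hat g(\sigma))=\bW(g(-e^\sigma))$ is non-increasing, and, together with the subconvergence and the continuity of $\bW$ under smooth convergence of metrics on $\bar M$, this forces $\bW(\hat g(\sigma))\searrow\bW(\bar g)$ as $\sigma\to\infty$.

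\emph{Step 2 (\L{}ojasiewicz--Simon near the soliton).} Next I would fix a slice for the $\mathrm{Diff}(\bar M)$-action at $\bar g$. Because $(\bar M,\bar g,\bar f)$ is a compact shrinker, the minimizer $(f_g,\tau_g)$ of $\W(g,\cdot,\cdot)$ under Perelman's constraint is nondegenerate and depends analytically on $g$ near $\bar g$, so $\bW$ restricts to a real-analytic functional of the metric alone on a $C^k$-neighborhood $U$ of $[\bar g]$, and, modulo diffeomorphisms and time-reparametrization, the rescaled Ricci flow is the $L^2$-gradient flow of $\bW$. This yields the \L{}ojasiewicz--Simon inequality
\[
\bigl|\bW(g)-\bW(\bar g)\bigr|^{1-\theta}\ \le\ C\,\bigl\|\nabla\bW(g)\bigr\|_{L^2(g)}\qquad\text{for all }g\in U
\]
for some $\theta\in(0,\tfrac12]$ and $C<\infty$, where $\nabla\bW$ denotes the $L^2$-gradient. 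This gauged \L{}ojasiewicz--Simon inequality for Perelman's entropy is due to Sun--Wang (see also Kr\"oncke), an application of the abstract theory of Simon \cite{S83} and Feehan--Maridakis to the real-analytic functional $\bW$.

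\emph{Step 3 (bootstrap and conclusion).} Finally, choosing $\sigma_0$ among the $\sigma_i$ so large that $\hat g(\sigma_0)\in U$ and $\bW(\hat g(\sigma_0))-\bW(\bar g)$ is as small as desired, a continuity argument shows $\hat g(\sigma)\in U$ for all $\sigma\ge\sigma_0$: while the trajectory remains in $U$, combining $\tfrac{d}{d\sigma}\bW(\hat g(\sigma))=-\|\nabla\bW(\hat g(\sigma))\|^2$ with the \L{}ojasiewicz inequality and integrating gives $\int_{\sigma_0}^{\sigma}\|\partial_s\hat g(s)\|\,ds\le C\theta^{-1}\bigl(\bW(\hat g(\sigma_0))-\bW(\bar g)\bigr)^{\theta}$, which is less than the $C^k$-radius of $U$ once $\sigma_0$ is large, while Bamler's $\varepsilon$-regularity and parabolic smoothing let one keep converting this weak ($L^1$-in-time) bound on the displacement back into the $C^k$-control defining $U$. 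Hence $\hat g(\sigma)$ stays in $U$, has finite length, and converges in $C^\infty$ modulo diffeomorphisms, as $\sigma\to\infty$, to a shrinking gradient soliton — necessarily $[\bar g]$, since $\hat g(\sigma_i)\to[\bar g]$. Consequently, for any $\{\tau_i'\}$ with $\tau_i'\nearrow\infty$ the rescaled flows $(M,\tau_i'^{-1}g_{\tau_i't})$ converge smoothly (up to diffeomorphism) to $(\bar M,\bar g_t)=\XX^\infty$; and for any base point $(p_1,t_1)$ the rescaled conjugate heat kernels $\nu_{p_1,t_1|\tau_i't}$ converge to $\mu^\infty_t$, since on a Ricci flow converging smoothly to the compact shrinking soliton a backward conjugate heat kernel based at any fixed point, carried far back in time and rescaled, tends to the soliton's canonical measure — equivalently, the tangent flow at infinity of a compact shrinking soliton, from any base point, is the soliton itself with its canonical measure. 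As smooth convergence of the flows together with convergence of the conjugate heat kernels implies $\mathbb{F}$-convergence of the corresponding metric flow pairs \cite{Bam20b}, the asserted convergence to $\XX^\infty$ follows; the finite-time singularity case is handled identically, with $\sigma\to+\infty$ now corresponding to the singular time.

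\emph{Main obstacle.} Steps 1 and 2 are at heart invocations of existing theory (Bamler's partial regularity; the gauged \L{}ojasiewicz--Simon inequality). The real work is Step 3: carrying out the gauge fixing so that the diffeomorphism-invariant functional $\bW$ is compatible with the gradient-flow estimate (controlling the drift of the gauge using compactness of $\mathrm{Isom}(\bar g)$), and — the crux — repeatedly upgrading the weak smallness that the \L{}ojasiewicz estimate produces to the smooth smallness needed to reapply it, which is exactly where Bamler's $\varepsilon$-regularity is indispensable.
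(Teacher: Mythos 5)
Your proposal follows essentially the same route as the paper: upgrade $\mathbb{F}$-convergence to smooth Cheeger--Gromov convergence of the Type I rescalings, pass to the logarithmically rescaled backward flow, gauge-fix to a modified (backward) Ricci flow near the compact shrinker, and run the Sun--Wang \L{}ojasiewicz--Simon argument to promote subsequential convergence to full convergence at a polynomial rate, then finish with uniqueness of the normalized potential (for the heat-kernel part). One small but worth noting technical difference: you invoke the \L{}ojasiewicz inequality for the full $\nu$-entropy $\bW(g)=\inf_{f,\tau}\W(g,f,\tau)$, whereas the paper (following Sun--Wang, Theorem~\ref{Lojaciewicz}) fixes the scale $\tau=1$ and works with $\mu_g=\mu(g,1)$; the fixed-scale version is technically cleaner because the backward modified Ricci flow is then literally the negative gradient flow of $\mu_g$, and one need not handle nondegeneracy of the joint minimization in $(f,\tau)$ nor mod out by the scaling symmetry that $\nu$ carries. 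Your Step 3 glosses over exactly what the paper proves in Lemmas~\ref{shorttimestability}, \ref{longtime}, \ref{longtimecontinuity} --- converting the $L^1$-in-$\sigma$ displacement bound from the \L{}ojasiewicz estimate back into the $C^{k,\gamma}$ smallness needed to stay in the regular neighborhood, for which the paper uses Bamler's backward pseudolocality (Theorem~\ref{thm: backward pseudo}) together with Shi's estimates and interpolation rather than an $\varepsilon$-regularity statement --- but you correctly flag this as the crux and your plan for it is sound.
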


\begin{Theorem}[Uniqueness of smooth and closed tangent flow at singular point]\label{Thm_main_1}
Let $(M^n,g_t)_{t\in[-T,0)}$ be a Ricci flow on a closed manifold $M$ which develops a finite-time singularity at $t=0$. Assume that there exist a sequence $\{\tau_i\}_{i=1}^\infty$ with $\tau_i\searrow 0$ such that the limit flow $\XX^\infty$ in (\ref{the_F_convergence}) is a smooth Ricci flow on a closed manifold. Then for any sequence $\{\tau_i'\}_{i=1}^\infty$ with $\tau_i'\searrow 0$ and any singular conjugate heat kernel $(\mu'_t)_{t\in[-T,0)}$, we have
\[
\left((M,\tau_i'^{-1}g_{\tau_i' t})_{t\in [-T\tau_i'^{-1},0)},(\mu'_{\tau_i' t})_{t\in[-T\tau_i'^{-1},0)}\right)
\xrightarrow{\makebox[1cm]{$\mathbb{F}$}}
\left(\XX^\infty,(\mu^\infty_t)_{t\in -(\infty,0)}\right).
\]
\end{Theorem}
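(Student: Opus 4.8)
The plan is to reduce the statement to a uniqueness problem for a Ricci flow on the \emph{fixed} closed manifold underlying $\mathcal{X}^\infty$, by first upgrading the $\mathbb{F}$-convergence along $\{\tau_i\}$ to smooth convergence, then extracting from the monotone pointed Nash entropy a single ``limiting entropy'' value that is forced on every tangent flow, and finally running a {\L}ojasiewicz--Simon argument for the shrinker entropy to promote convergence along one sequence to convergence along every sequence and for every base point.

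\textbf{Step 1: smooth convergence and identification of $\mathcal{X}^\infty$.} Since $\mathcal{X}^\infty$ is a smooth Ricci flow on a closed manifold and the $\mathbb{F}$-convergence in (\ref{the_F_convergence}) is automatically noncollapsed in the finite-time case, Bamler's partial-regularity and compactness theory forces the convergence $(M,\tau_i^{-1}g_{\tau_i t})\to(N,h(t))$ to be smooth in the Cheeger--Gromov sense, uniformly on every compact subinterval of $(-\infty,0)$; in particular $M\cong N$ for $i$ large and $\tau_i^{-1}g_{\tau_i\cdot}$ converges smoothly to $h(\cdot)$ on, say, $t\in[-2,-\tfrac12]$. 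Because $\mathcal{X}^\infty$ is simultaneously a metric soliton and a smooth closed Ricci flow, it is the canonical flow of a shrinking gradient Ricci soliton $(N,h,f)$, and its constant pointed Nash entropy equals Perelman's entropy $\mu_\infty$ of that shrinker.

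\textbf{Step 2: shared entropy, then {\L}ojasiewicz--Simon.} The pointed Nash entropy $\tau\mapsto\mathcal{N}_{x_0}(\tau)$ based at a fixed singular conjugate heat kernel $x_0$ is nonincreasing and bounded below by no-local-collapsing, so $N_0:=\lim_{\tau\to0^+}\mathcal{N}_{x_0}(\tau)$ exists, is finite, and is \emph{independent of the approximating sequence}. By continuity of the Nash entropy under $\mathbb{F}$-convergence, $N_0=\mu_\infty$, and likewise any subsequential $\mathbb{F}$-limit along another sequence $\tau_i'\searrow 0$ is a metric soliton with pointed Nash entropy exactly $N_0=\mu_\infty$. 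To upgrade ``same entropy'' to ``isometric'' I would pass to the self-similarly rescaled and reparametrized flow $\widehat g(s)=e^{s}g_{-e^{-s}}$, $s=-\log(-t)\to+\infty$, which, modulo the diffeomorphisms generated by $\nabla f$, is the negative gradient flow of the shrinker entropy and has $(N,h,f)$ as a (possibly degenerate) critical point; Step 1 says $\widehat g(s_i)\to h$ smoothly modulo diffeomorphism with $s_i=-\log\tau_i\to\infty$. A {\L}ojasiewicz--Simon inequality for the shrinker entropy near $(N,h,f)$ on the closed manifold $N$, in the spirit of Sun--Wang and Colding--Minicozzi, together with the entropy monotonicity that makes the total ``drift length'' finite, then yields that $\widehat g(s)$ converges as $s\to\infty$ to a \emph{unique} limit shrinker; since one subsequence converges to $h$, the full limit is $h$, so every sequence $\tau_i'$ produces $\mathcal{X}^\infty$. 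Finally, once $\widehat g(s)\to h$ smoothly, the conjugate heat kernel measures based at any point or associated with any singular conjugate heat kernel must converge, after rescaling, to the unique soliton measure $(4\pi)^{-n/2}e^{-f}\,dV_h$ carried by $\mathcal{X}^\infty$ (equivalently, a smooth closed singularity model forces the singularity to be globally Type~I, whence the singular conjugate heat kernel is unique), which gives the asserted independence of the base point.

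\textbf{Main obstacle.} The hardest part will be the dynamical side of the {\L}ojasiewicz--Simon step: showing that once some slice $\widehat g(s_i)$ is $C^k$-close to the shrinker, the flow cannot escape the {\L}ojasiewicz neighborhood before $s\to\infty$, so that the estimate integrates to a finite drift and a genuine (not merely subsequential) limit. This forces a careful interplay between the {\L}ojasiewicz exponent, the monotonicity of $\mathcal{N}_{x_0}$, and keeping \emph{all} intermediate scales in the regular (smooth, noncollapsed, uniformly curvature-bounded) regime; here Bamler's almost-selfsimilarity and $\varepsilon$-regularity estimates, fed by the smallness of $\mathcal{N}_{x_0}(\tau_1)-\mathcal{N}_{x_0}(\tau_2)$ for $\tau_1,\tau_2$ near $0$, must be invoked to rule out bubbling or collapse developing between consecutive scales $\tau_{i+1}$ and $\tau_i$.
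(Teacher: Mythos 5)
Your proposal follows essentially the same route as the paper: upgrade the $\mathbb{F}$-convergence in~\eqref{the_F_convergence} to smooth Cheeger--Gromov convergence via Bamler's partial regularity, identify the limit as the canonical form of a compact shrinker using Perelman's monotonicity, then run the Sun--Wang {\L}ojasiewicz--Simon argument along the dynamically rescaled flow $\widehat g(s)=e^{s}g_{-e^{-s}}$ (gauge-fixed by diffeomorphisms to the modified Ricci flow) to promote subsequential convergence to full convergence, and finally conclude that any singular conjugate heat kernel rescales to the unique normalized soliton measure. One small caveat: your parenthetical ``whence the singular conjugate heat kernel is unique'' is not what the paper uses, and in fact the paper explicitly cautions that singular conjugate heat kernels at a singular time need not be unique; the correct point is that once $\widehat g(s)\to h$ smoothly, every singular conjugate heat kernel, after rescaling, converges to $(4\pi)^{-n/2}e^{-f}\,dV_h$ because a closed shrinker has a unique normalized potential --- and also, for your ``main obstacle'', the paper controls intermediate scales via the forward/backward pseudolocality theorems and Shi's estimates rather than $\varepsilon$-regularity or almost-selfsimilarity.
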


In regard of the ancient Ricci flow in Theorem \ref{Thm_main}, one may rightly ask whether the ancient solution in question is the same as the asymptotic soliton itself. In fact, this question is answered by Kr\"oncke \cite{Kr15}: it depends on the stability of the asymptotic soliton. If the closed asymptotic soliton is stable, then the ancient Ricci flow must be the same as this soliton. However, there exists nontrivial (that is, non-self-similar) ancient solution flowing out of an unstable closed soliton. For other results related to Theorem \ref{Thm_main_1}, refer to \cite{Ac12} and \cite{Se06}. 

Our major technique of proving the above two results is a \L ojaciewicz inequality proved by Sun-Wang \cite{SW15}. This \L ojaciewicz inequality shows that, once the Riemannian metric evolving by a \emph{modified} (forward or backward) Ricci flow becomes too close to a shrinking gradient Ricci soliton, then it will stay close to this soliton for all the time; if this soliton is also a sequential limit, then the modified flow converges to the soliton at polynomial rate. The following corollaries follow immediately from this technique.

\begin{Corollary}\label{Coro_main}
Under the assumptions of Theorem \ref{Thm_main}, the limit flow $\left(\XX^\infty,(\mu^\infty_t)_{t\in -(\infty,0)}\right)$ is the canonical form of a shrinking gradient Ricci soliton $(M^n,g_o,f_o)$, where $\mu_t^\infty$ is the conjugate heat flow constructed using $f_o$. Furthermore, the modified backward Ricci flow converges to $g_o$ at polynomial rate. More precisely, for all $t\ll -1$, there is a self-diffmorphism $\phi_t:M^n\to M^n$, evolving smoothly in $t$, such that
$$\left\|\,\tfrac{1}{|t|}\phi_t^*g_t-g_o\right\|_{C^{k,\gamma}_{g_o}}\leq C(\log(-t))^{-\beta},$$ where $k\gg 1$, $\gamma, \beta\in (0,1)$, and $C<\infty$.
\end{Corollary}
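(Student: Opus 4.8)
The plan is to combine Theorem \ref{Thm_main} (which provides the $\mathbb{F}$-convergence to a fixed smooth closed limit $\XX^\infty$, independent of scaling sequence) with the Sun--Wang \L ojasiewicz inequality \cite{SW15} applied to the modified backward Ricci flow. First I would recall that, by the discussion preceding Theorem \ref{Thm_main} and the results of \cite{Bam20c}, the limit metric flow $\XX^\infty$ is a metric soliton; since we are now assuming it is a smooth Ricci flow on a closed manifold $M^n$, it must be the canonical form of a shrinking gradient Ricci soliton $(M^n, g_o, f_o)$, and the limiting conjugate heat flow $\mu^\infty_t$ is the one generated by $f_o$ via the standard soliton ansatz (i.e. $d\mu^\infty_t = (4\pi|t|)^{-n/2} e^{-f_o}\,dV_{g_o}$ up to the self-similar rescaling/diffeomorphism that defines the canonical form). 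This identification is essentially the rigidity part of Bamler's metric soliton classification in the smooth closed case, combined with \cite{CMZ21}.

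Next I would translate the $\mathbb{F}$-convergence into smooth convergence. Because $\XX^\infty$ is smooth and closed, Bamler's partial regularity theory \cite{Bam20b,Bam20c} upgrades the $\mathbb{F}$-convergence of $(M,\tau_i^{-1}g_{\tau_i t})$ to $\XX^\infty$ to smooth Cheeger--Gromov convergence on compact time intervals: there are diffeomorphisms identifying, for $t$ in a fixed window near $-1$, the rescaled flow with $g_o$ up to small $C^{k,\gamma}$ error along the subsequence $\tau_i$. Rescaling, this says that along a sequence of times $t_j \searrow -\infty$ (times of the form $-\tau_j$, say, after reparametrizing the backward flow logarithmically) the normalized metric $\tfrac{1}{|t|}g_t$, pulled back by suitable diffeomorphisms, is arbitrarily $C^{k,\gamma}_{g_o}$-close to $g_o$. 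In particular, for $j$ large the flow enters any prescribed small $C^{k,\gamma}$-neighborhood of $g_o$ in the space of metrics modulo diffeomorphism.

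Now I would invoke the Sun--Wang \L ojasiewicz inequality for Ricci flow near a shrinking gradient Ricci soliton \cite{SW15}: once a solution of the (rescaled) modified backward Ricci flow is sufficiently $C^{k,\gamma}$-close to $g_o$, it stays in a small neighborhood for all subsequent (i.e. more negative) times, and moreover converges to a limit soliton at a polynomial-in-log rate. Since Theorem \ref{Thm_main} already guarantees that $g_o$ itself is the backward limit (the tangent flow at infinity is unique and equals $\XX^\infty$), the only possible limit of the modified flow is $g_o$, so the \L ojasiewicz estimate forces
\[
\left\|\,\tfrac{1}{|t|}\phi_t^*g_t - g_o\right\|_{C^{k,\gamma}_{g_o}} \le C(\log(-t))^{-\beta}
\]
for all $t \ll -1$, with $\phi_t$ the family of diffeomorphisms produced by solving the gauge-fixing (DeTurck-type / soliton-modified) ODE along the flow, evolving smoothly in $t$. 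The exponent $\beta \in (0,1)$ and the loss of one power in $k$ are exactly as in \cite{SW15}.

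The main obstacle I expect is the bookkeeping needed to pass from Bamler's $\mathbb{F}$-convergence (a statement about metric flows and conjugate heat kernels) to the hypothesis actually required by \cite{SW15}, namely $C^{k,\gamma}$-closeness of honest Riemannian metrics on a fixed closed manifold after a well-chosen diffeomorphism and after fixing the soliton gauge. This requires: (i) using the smoothness and closedness of $\XX^\infty$ to rule out collapsing and concentration of the conjugate heat kernel, so that the convergence is genuinely smooth on all of $M$; (ii) choosing the base point / conjugate heat kernel consistently so that the rescaled flows share the same underlying manifold as $g_o$ for large $i$; and (iii) matching Bamler's time-parametrization and the self-similar scaling built into the canonical form with the ``modified backward Ricci flow'' normalization in which the \L ojasiewicz inequality is stated. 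Once these identifications are in place, the convergence rate is an immediate consequence of the Sun--Wang result, and uniqueness of the limit is guaranteed by Theorem \ref{Thm_main}. A secondary technical point is to confirm that the diffeomorphisms $\phi_t$ can be chosen to depend smoothly on $t$ and to be defined for all $t \ll -1$ (not just along the subsequence), which follows from the standard continuation argument once the solution is trapped near $g_o$.
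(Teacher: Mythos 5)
Your outline follows the same route as the paper: upgrade the $\mathbb{F}$-convergence to smooth Cheeger--Gromov convergence using the smoothness and compactness of $\XX^\infty$ (via \cite[Theorem 2.5]{Bam20c}), identify $g_o$ as a shrinker, and then run the Sun--Wang \L ojasiewicz machinery for the backward modified Ricci flow to get trapping plus polynomial decay. Two points, however, are glossed over in a way that leaves real gaps. First, the backward stability/convergence theorem (Theorem \ref{thm:backward convergence}, the backward analogue of \cite[Lemma 3.2]{SW15}) has the hypothesis $\mu_{g_t}\geq \mu_{g_o}$ for all $t$; without it the ``once close, stays close'' statement is simply not available. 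You never verify this. The paper does so in Lemma \ref{longtime}: $\mu_{\bg_s}=\mu(g_t,|t|)$ is monotone in $t$ by Perelman's monotonicity, and its limit along the subsequence equals $\mu(g_o,1)=\mu_{g_o}$ by the smooth convergence, whence $\mu_{\bg_s}\geq\mu_{g_o}$ everywhere. Second, the ``standard continuation argument'' for long-time existence of the gauge-fixed flow is not automatic: the modified flow $\bg_s=\psi_s^*\tg_s$ is only defined while the metric stays inside the regular neighborhood $\mathcal U$ where the minimizer $f_g$ is unique and $g\mapsto f_g$ is analytic, so one must combine the \L ojasiewicz trapping with backward pseudolocality to extend the flow a definite time $\eta$ beyond any $\bar s<T$, with $\eta$ independent of $\bar s$; this is the content of Lemma \ref{longtime} and is where the paper spends most of its effort.

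A smaller stylistic difference: you identify the limit of the modified flow with $g_o$ by invoking the uniqueness statement of Theorem \ref{Thm_main}. That is legitimate if the corollary is read as conditional on the theorem, but the paper instead proves both simultaneously through Proposition \ref{Pro_13} and identifies the limit by the elementary two-limits argument of Lemma \ref{differbydiffeo} ($\bg_{s_i}$ converges both to $g_\infty$ by the polynomial estimate and to $g_o$ by hypothesis, so they differ by a diffeomorphism). This avoids any appearance of circularity and is worth adopting. With the entropy verification and the continuation argument filled in, your proof would be essentially the paper's.
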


\begin{Corollary}\label{Coro_main_1}
Under assumptions of Theorem \ref{Thm_main_1}, the limit flow $\left(\XX^\infty,(\mu^\infty_t)_{t\in -(\infty,0)}\right)$ is the canonical form of a shrinking gradient Ricci soliton $(M^n,g_o,f_o)$, where $\mu^\infty_t$ is the conjugate heat flow constructed using $f_o$. Furthermore, the modified Ricci flow converges to $g_o$ at a polynomial rate. More precisely, for all $t$ close enough to $0$, there is a self-diffmorphism $\phi_t:M^n\to M^n$, evolving smoothly in $t$, such that
$$\left\|\,\tfrac{1}{|t|}\phi_t^*g_t-g_o\right\|_{C^{k,\gamma}_{g_o}}\leq C(-\log(-t))^{-\beta},$$ where $k\gg 1$, $\gamma, \beta\in (0,1)$, and $C<\infty$.
\end{Corollary}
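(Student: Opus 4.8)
The plan is to run the finite-time-singularity version of the argument behind Corollary~\ref{Coro_main}, with the rescaling parameter now $s=-\log(-t)$, which increases to $+\infty$ as $t\nearrow0$. The two inputs are Theorem~\ref{Thm_main_1}---together with the fact, due to Bamler \cite{Bam20b}, that $\mathbb{F}$-convergence to a smooth closed limit promotes to smooth Cheeger--Gromov convergence---and the \L ojasiewicz--Simon inequality of Sun--Wang \cite{SW15}.

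First I would identify $\XX^\infty$. As a smooth Ricci flow on a closed manifold that is also, by \cite{Bam20c}, a metric soliton, $\XX^\infty$ is self-similar. Since its time-slices near $t=-1$ are smooth and closed, the $\mathbb{F}$-convergence in Theorem~\ref{Thm_main_1} upgrades to smooth Cheeger--Gromov convergence in a neighborhood of $t=-1$; in particular the underlying manifold of $\XX^\infty$ is diffeomorphic to $M$, with which we identify it. Writing $g_o$ for the $t=-1$ time-slice, self-similarity provides $f_o\in C^\infty(M)$ with $\Ric_{g_o}+\nabla^2 f_o=\tfrac12 g_o$, and $(\mu^\infty_t)$ is then the Gaussian conjugate heat flow determined by $f_o$ (suitably normalized), so that $\XX^\infty$ is the canonical form of $(M^n,g_o,f_o)$. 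Moreover, by Theorem~\ref{Thm_main_1} the tangent flow is unique, so for \emph{every} $\tau\searrow0$ there are diffeomorphisms $\psi_\tau$ of $M$ with $\tfrac1\tau\psi_\tau^*g_{-\tau}\to g_o$ in $C^\infty(M)$.

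Next I would pass to the rescaled flow $\hat g(s):=\tfrac1{-t}g_t$, $s=-\log(-t)$, which evolves by $\partial_s\hat g=-2\Ric_{\hat g}+\hat g$ and for which $g_o$ is stationary modulo the diffeomorphisms generated by $\nabla f_o$; after the usual time-dependent gauge fixing this becomes the modified Ricci flow of \cite{SW15}, which has $g_o$ as a genuine fixed point. By the previous paragraph, once $s$ is large enough (equivalently, $t$ close enough to $0$) the gauge-fixed flow lies in the $C^{k,\gamma}_{g_o}$-neighborhood of $g_o$ on which the Sun--Wang inequality holds; its conclusion then yields a shrinking gradient Ricci soliton $g_\infty$ on $M$ and a family of diffeomorphisms $\phi_t$ of $M$, varying smoothly in $t$ (they solve an ODE with smooth coefficients along the modified flow), with $\bigl\|\tfrac1{|t|}\phi_t^*g_t-g_\infty\bigr\|_{C^{k,\gamma}_{g_o}}\le C\,s^{-\beta}=C(-\log(-t))^{-\beta}$ for suitable $k\gg1$ and $\gamma,\beta\in(0,1)$. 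Finally, since $g_o$ itself occurs as the $C^\infty$-limit of the rescalings $\tfrac1{-\tau}g_{-\tau}$, the soliton $g_\infty$ must agree with $g_o$ up to an automorphism of the soliton, which we absorb into $\phi_t$.

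The step I expect to be the main obstacle is the passage from the $\mathbb{F}$-closeness furnished by Theorem~\ref{Thm_main_1} to honest $C^{k,\gamma}$-closeness, with diffeomorphisms, on a whole time interval, since that is what actually drives the Sun--Wang argument: this rests on Bamler's partial-regularity and structure theory for $\mathbb{F}$-limits with smooth closed time-slices, together with parabolic smoothing, and one must verify that the gauge-fixing diffeomorphisms built into the modified flow can be chosen compatibly with the $\psi_\tau$ above. Reconciling the various normalizations (the vector field defining the modified flow, the normalization of $f_o$, and the diffeomorphism identifying the limit manifold with $M$) is routine but must be carried out consistently; with that done, the estimate follows exactly as in \cite{SW15} and the proof of Theorem~\ref{Thm_main_1}, simply rewriting $s^{-\beta}$ as $(-\log(-t))^{-\beta}$.
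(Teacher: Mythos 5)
Your proposal is correct and follows essentially the same route as the paper's proof via Proposition~\ref{Pro_24}: first upgrade the $\mathbb{F}$-convergence of Theorem~\ref{Thm_main_1} to smooth Cheeger--Gromov convergence (so that the scaled time-slices converge to a closed shrinker $g_o$), then pass to the exponentially rescaled flow $\hat g(s)=e^{s}g_{-e^{-s}}$ with $s=-\log(-t)$, gauge-fix by the flow of $-\nabla f_{\hat g(s)}$ to obtain the forward modified Ricci flow, and invoke the Sun--Wang \L ojasiewicz--Simon machinery (Theorem~\ref{neighborhoods}) to get polynomial convergence, with a final identification of the Sun--Wang limit $g_\infty$ with $g_o$ up to a diffeomorphism. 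Your "main obstacle" discussion correctly flags the two technical points that the paper addresses explicitly---long-time existence of the gauge-fixed flow via pseudolocality (the analogue of Lemma~\ref{longtime}) and the entropy monotonicity $\mu_{\bg_s}\leq\mu_{g_o}$ required by Theorem~\ref{neighborhoods}---though these deserve to be checked rather than deferred as "routine."
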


A more general and difficult question is whether one can prove the uniqueness of the tangent flow (either at a point or at infinity) without the assumptions in Theorem \ref{Thm_main} or Theorem \ref{Thm_main_1}. In fact, by definition, it is easy to observe that the \emph{uniqueness of tangent flow at infinity} consists of two statements: (1) that the tangent flow is independent of the base point, (2) that the tangent flow is independent of the sequence of scaling factors. As mentioned earlier, Bamler's tangent flow at infinity resembles the asymptotic cone of some static spaces. It is well-known that the asymptotic cone of a boundedly compact metric space does not depend on the base point; see, e.g., \cite[Proposition 8.2.8]{BBI01}. 
Here we present a parallel result, which also generalizes the authors' previous result \cite[Theorem 8.1]{CMZ21}. Due to the technicality of the $\mathbb{F}$-convergence, the proof of the this theorem is significantly more complicated than the parallel case of asymptotic cone, although they are in the same spirit. We are not yet able to show that the tangent flow at infinity is independent of the scaling factors.

\begin{Theorem}
\label{thm: rough tangent}
The tangent flow at infinity of an ancient $H$-concentrated (c.f. \cite[Definition 3.30]{Bam20b})  metric flow does not depend on the base point.
\end{Theorem}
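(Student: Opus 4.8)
The plan is to fix two base points $z_0,z_1\in\XX$ and show that the associated blow-down limits (tangent flows at infinity) coincide. Write $a_j=\mathfrak{t}(z_j)$, assume without loss of generality $a_1\le a_0$, and set $\mu^j_t=\nu_{z_j\,|\,t}$, so $\mu^0$ is defined on $(-\infty,a_0]$ and $\mu^1$ on $(-\infty,a_1]$. For a sequence $\tau_i\nearrow\infty$ I would apply Bamler's $\mathbb{F}$-compactness to the \emph{single} sequence of rescaled metric flows $\XX^{0,\tau_i^{-1/2}}$ carrying the \emph{two} sequences of rescaled measures $(\mu^0_{\tau_it})_t$ and $(\mu^1_{\tau_it})_t$; after passing to a subsequence one obtains a single correspondence in which $\XX^{0,\tau_i^{-1/2}}\to\XX^\infty$ and $(\mu^j_{\tau_i\,\cdot})\to\mu^{\infty,j}$ for $j=0,1$. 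The limit metric flow $\XX^\infty$ is then literally the same for both base points, so the theorem reduces to proving $\mu^{\infty,0}=\mu^{\infty,1}$ as conjugate heat flows on $\XX^\infty$. By the structure theory of blow-down limits each $\mu^{\infty,j}$ is a conjugate heat kernel $\nu^{\XX^\infty}_{x^j_\ast\,|\,\cdot}$ based at a point $x^j_\ast\in\XX^\infty_0$ (the rescaled base point $z_j$, whose base time $\tau_i^{-1}a_j\to0$, survives in the limit as a time-$0$ point), so it suffices to prove $x^0_\ast=x^1_\ast$.

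The quantitative input is $H$-concentration. For $t\le a_1$, using the semigroup identity $\mu^0_t=\int_{\XX_{a_1}}\nu_{x\,|\,t}\,d\mu^0_{a_1}(x)$, the convexity property of Bamler's variance functional under averaging of marginals, and the defining inequality of $H$-concentration for kernels based at the common time $a_1$, one obtains
\[
\mathrm{Var}\big(\mu^0_t,\mu^1_t\big)\le\int_{\XX_{a_1}}\mathrm{Var}\big(\nu_{x\,|\,t},\nu_{z_1\,|\,t}\big)\,d\mu^0_{a_1}(x)\le\int_{\XX_{a_1}}d_{a_1}^2(x,z_1)\,d\mu^0_{a_1}(x)+H(a_1-t),
\]
where $C:=\int_{\XX_{a_1}}d_{a_1}^2(x,z_1)\,d\mu^0_{a_1}(x)<\infty$ by the Gaussian concentration estimates for conjugate heat kernels on $H$-concentrated metric flows. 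Rescaling by $\lambda_i=\tau_i^{-1/2}$ multiplies both variance and elapsed time by $\tau_i^{-1}$, so for fixed $t'<0$ and $i$ large enough that $\tau_it'\le a_1$,
\[
\mathrm{Var}^{\XX^{0,\lambda_i}}\!\big(\mu^0_{\tau_it'},\mu^1_{\tau_it'}\big)\le\tau_i^{-1}C+H\big(\tau_i^{-1}a_1-t'\big),
\]
whose right-hand side converges to $H|t'|$ as $i\to\infty$. Passing to the $\mathbb{F}$-limit inside the common correspondence — the step that requires uniform Gaussian tail bounds on the kernels, so that the unbounded integrand $d^2$ is uniformly integrable and $\mathrm{Var}$ is (lower semi-)continuous under the convergence — yields $\mathrm{Var}^{\XX^\infty}\!\big(\nu_{x^0_\ast\,|\,t'},\nu_{x^1_\ast\,|\,t'}\big)\le H|t'|$ for every $t'<0$.

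Finally I would let $t'\nearrow0$. Since $\nu^{\XX^\infty}_{x^j_\ast\,|\,t'}\to\delta_{x^j_\ast}$ as $t'\to\mathfrak{t}(x^j_\ast)=0$, with a quantitative rate of order $\sqrt{H|t'|}$ supplied by $H$-concentration, the $d^2$-moments converge and $\mathrm{Var}^{\XX^\infty}\!\big(\nu_{x^0_\ast\,|\,t'},\nu_{x^1_\ast\,|\,t'}\big)\to\mathrm{Var}\big(\delta_{x^0_\ast},\delta_{x^1_\ast}\big)=d_0^2(x^0_\ast,x^1_\ast)$. Together with the bound of the previous step this forces $d_0(x^0_\ast,x^1_\ast)=0$, hence $x^0_\ast=x^1_\ast$, hence $\mu^{\infty,0}=\mu^{\infty,1}$, and the two blow-down limits agree as metric flow pairs.

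I expect three points to require genuine care. First, organizing Bamler's compactness so that both measure-sequences live in one correspondence over one limit $\XX^\infty$ — conceptually routine but notationally heavy. Second, the identification of $\mu^{\infty,j}$ with the conjugate heat kernel based at a bona fide time-$0$ point $x^j_\ast\in\XX^\infty_0$ and the convergence of the rescaled base point to that point; this is exactly where the $\mathbb{F}$-geometry is much harder than the elementary fact that the base point washes out under rescaling for asymptotic cones, and I regard it as the main obstacle. Third, the limit-passage in the variance estimate, which leans on the uniform Gaussian bounds to upgrade $W_1$-convergence to convergence of $d^2$-moments. Modulo these, the argument is the metric-flow incarnation of Proposition~8.2.8 of \cite{BBI01}.
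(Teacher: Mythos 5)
Your proposal takes a genuinely different route from the paper's. The paper proves two quantitative $\mathbb{F}$-continuity results \emph{before} any limits are taken: a translation continuity (Proposition~\ref{prop: translation continuity}), showing that a small time shift $\sigma$ changes the $\mathbb{F}$-distance of a metric flow pair by $o(1)$; and a local continuity (Proposition~\ref{prop: local continuity}), showing that moving the base point inside a small $W_1$-parabolic ball changes the $\mathbb{F}$-distance by $o(1)$. The theorem then follows because $y_0\in\PP^*(x_0;\rho)$ for some fixed $\rho$, and parabolic rescaling by $\lambda_j\to 0$ shrinks this to $\PP^*_{\tilde\XX^j}(x_0;\lambda_j\rho)$, so the rescaled pairs based at $x_0$ and $y_0$ become $\mathbb{F}$-close to each other and therefore share the same set of subsequential limits. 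No structure on the limit flow is needed at all. Your approach instead passes to the limit first and argues inside $\XX^\infty$: a variance bound from $H$-concentration and the semigroup identity is established before rescaling, rescaling kills the base-point term and leaves $\mathrm{Var}(\mu^{\infty,0}_t,\mu^{\infty,1}_t)\le H|t|$, and you pinch $t\to 0^-$. What the paper's route buys is robustness (it never has to say anything about the limit object), plus independently useful continuity propositions; what yours would buy, if it closed, is the shorter conceptual story you describe in the last sentence.

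But the gap you flag as the ``main obstacle'' is real and, as written, unresolved at two levels. First, the claim that ``the limit metric flow $\XX^\infty$ is then literally the same for both base points'' is not free: $\mathbb{F}$-convergence is a convergence of metric flow \emph{pairs}, and the limit metric flow is only determined on the support of the accompanying conjugate heat flow. A correspondence carrying both measure sequences (\`a la \cite[Theorem 7.6]{Bam20b}) gives a common ambient object, but whether the two limit conjugate heat flows live on ``the same'' $\XX^\infty$ in a way that makes $\mathrm{Var}^{\XX^\infty}(\mu^{\infty,0}_t,\mu^{\infty,1}_t)$ meaningful is already part of what must be proved. Second, even granting that both $\mu^{\infty,j}$ are conjugate heat kernels $\nu_{x^j_*|\cdot}$ at honest time-$0$ points, your final step
\[
\mathrm{Var}^{\XX^\infty}\!\big(\nu_{x^0_\ast\,|\,t'},\nu_{x^1_\ast\,|\,t'}\big)\longrightarrow d_0^2(x^0_\ast,x^1_\ast)\quad (t'\nearrow 0)
\]
is exactly a \emph{future continuity} statement at the endpoint $t=0$, and $\mathbb{F}$-limits of metric flows are not automatically future continuous; the monotonicity \cite[Proposition 3.24(b)]{Bam20b} only gives that the $W_1$-distances are nondecreasing in $t'$ and bounded above by $d_0(x^0_*,x^1_*)$, not that they attain it. $H$-concentration gives $\mathrm{Var}(\nu_{x^j_*|t'})\le H|t'|\to 0$, so each kernel concentrates, but concentration alone does not identify the accumulation point with $x^j_*$ nor identify $d_0$ as the limit of the $W_1$-distances. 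Closing either gap would require importing the forward-continuity structure of the limit from Bamler's theory (e.g., passing to the future-continuous representative and checking it is compatible with both kernels), and that is precisely the type of postprocessing the paper avoids by working before the limit.
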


Under the assumptions of Theorem \ref{Thm_main} or Theorem \ref{Thm_main_1}, the limit in (\ref{the_F_convergence}) is smooth, and hence the $\mathbb{F}$-convergence is automatically smooth convergence according to \cite[Theorem 2.5]{Bam20c}. Note that the scaling in (\ref{the_F_convergence}) is a Type I scaling. We are indeed able to further weaken this condition by a noncollapsing argument of Bamler \cite[Theorem 6.2]{Bam20a}. More precisely, we show that any Ricci flow admitting a closed forward/backward limit must be a forward/backward Type I Ricci flow, and hence satisfies all the results established thus far. In particular, we \emph{do not} assume that the scaled limit is obtained from a Type I scaling process. The following two theorems are stronger versions of all the results stated above, hence we will be focusing on them in most part of this paper.

\begin{Theorem}[Rigidity of closed backward limit]\label{Thm_backwardrigidity}
Let $(M^n,g_t)_{t\in (-\infty,0]}$ be an ancient Ricci flow satisfying (\ref{Nash_bound}), where $M$ is a closed manifold. Assume that there exist $t_i\searrow-\infty$ and $Q_i>0$ such that
\begin{eqnarray}\label{C2convergence0}
\left(M^n,Q_ig_{t_i}\right)\longrightarrow(M^n,g_o),
\end{eqnarray}
in the $C^2$ Cheeger-Gromov sense, where $g_o$ is a $C^2$ Riemannian metric on $M^n$. Then $g_o$ is a shrinker metric and $(M,g_t)$ is a Type I ancient solution satisfying Theorem \ref{Thm_main} and Corollary \ref{Coro_main}
with the unique tangent flow at infinity $\left(\XX^\infty,(\mu^\infty_t)_{t\in -(\infty,0)}\right)$ being the canonical form of $(M,g_o,f_o)$, where $f_o$ is the potential function of $g_o$ and $\mu^\infty_t$ is the conjugate heat flow constructed using $f_o$.
\end{Theorem}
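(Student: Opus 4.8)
The plan is to reduce Theorem \ref{Thm_backwardrigidity} to Theorem \ref{Thm_main} and Corollary \ref{Coro_main} by showing that the hypothesis (\ref{C2convergence0}) forces the ancient flow to be of Type I, so that the Type I rescalings that appear in (\ref{the_F_convergence}) can actually be realized and their $\mathbb{F}$-limit identified with the canonical form of $(M,g_o,f_o)$. First I would upgrade the $C^2$ Cheeger--Gromov convergence in (\ref{C2convergence0}) to smooth convergence: since the diffeomorphisms $\phi_i:M\to M$ pull back $Q_ig_{t_i}$ to metrics $C^2$-converging to $g_o$ on the fixed closed manifold $M$, and since each $g_{t_i}$ is a time-slice of a smooth Ricci flow with curvature bounds on compact time-intervals of $(-\infty,0]$, Shi-type local derivative estimates (applied on the parabolic neighborhoods of $t_i$, rescaled by $Q_i$) give uniform $C^k$ bounds for all $k$ on the rescaled flows $(M,Q_ig_{Q_i^{-1}t+t_i})$ over a uniform time interval around $0$. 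Hence $g_o$ is smooth and, after passing to a subsequence, $(M,Q_ig_{Q_i^{-1}t+t_i})$ converges smoothly to a limit Ricci flow $(M,g_\infty(t))$ with $g_\infty(0)=g_o$; this also shows the limit is noncollapsed, so (\ref{Nash_bound}) together with Bamler's theory applies.

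Next I would argue that $g_o$ is a shrinking gradient Ricci soliton. The key input is that a backward limit of an ancient flow satisfying (\ref{Nash_bound}), taken along a subsequence $t_i\searrow-\infty$, must be self-similar: by Bamler's monotonicity of the Nash entropy along the conjugate heat kernel, the pointed Nash entropy $\N_{p,t}(\tau)$ has a limit as $\tau\to\infty$, and the Type I-type rescalings in (\ref{the_F_convergence}) converge to a metric soliton; I would match the ``geometric'' backward limit $(M,g_\infty(t))$ in (\ref{C2convergence0}) with Bamler's $\mathbb{F}$-limit $\XX^\infty$ by noting that on a closed manifold $C^2$/smooth Cheeger--Gromov convergence implies $\mathbb{F}$-convergence of the induced metric flows (together with the associated conjugate heat kernels, by parabolic regularity), using that both sides are smooth and closed and invoking \cite[Theorem 2.5]{Bam20c}. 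This identifies $g_\infty(t)$ with the canonical form of a shrinking gradient Ricci soliton, so in particular $g_o$ is a shrinker metric with potential $f_o$. Once we know \emph{one} Type-I-normalized backward blow-up limit is a closed smooth shrinker, Theorem \ref{Thm_main} applies verbatim (its hypothesis is precisely that some backward $\mathbb{F}$-limit is a smooth closed Ricci flow), giving uniqueness of the tangent flow at infinity and its identification with the canonical form of $(M,g_o,f_o)$; Corollary \ref{Coro_main} then gives the polynomial convergence of the modified backward flow.

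Finally I would deduce the Type I property of $(M,g_t)_{t\le 0}$ itself. Here I would use the noncollapsing/curvature-bound argument of Bamler \cite[Theorem 6.2]{Bam20a}, as flagged in the paragraph preceding the theorem: having established that every backward blow-up sequence subconverges to the \emph{same} shrinking soliton (a smooth closed metric flow), one bootstraps a uniform curvature bound of the form $|\Rm|(\cdot,t)\le C/|t|$ for $t\le -1$ (and hence for all $t\le 0$ after adjusting constants), because a failure of such a bound would produce, along a suitable rescaling, a backward limit that is either singular or noncompact or non-shrinking, contradicting the uniqueness just proved. I expect the main obstacle to be exactly this last step — turning the uniqueness of the (rescaled) backward limit into a pointwise Type I curvature bound — since it requires a contradiction/compactness argument that must carefully track the interplay between the $\mathbb{F}$-convergence, the smoothness of the limit (via \cite[Theorem 2.5]{Bam20c}), and Perelman-type no-local-collapsing, rather than any single clean estimate; the preliminary steps (Shi estimates, matching $C^2$-convergence with $\mathbb{F}$-convergence, and applying the already-proved theorems) are comparatively routine.
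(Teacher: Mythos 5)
There is a genuine gap at the crux of your reduction: you never establish that the scaling factors in \eqref{C2convergence0} are of Type I, that is, that $c \le Q_i|t_i| \le C$. The hypothesis of Theorem~\ref{Thm_backwardrigidity} allows \emph{arbitrary} $Q_i>0$, while the hypothesis of Theorem~\ref{Thm_main} requires the $\mathbb{F}$-limit along Type~I rescalings $\tau_i^{-1}g_{\tau_i t}$ to be smooth and closed. Without knowing $Q_i\sim |t_i|^{-1}$, the $C^2$-Cheeger--Gromov limit $(M,Q_ig_{t_i})\to(M,g_o)$ cannot be identified with Bamler's tangent flow at infinity: if $Q_i|t_i|\to\infty$, then $\mathrm{Vol}(M,|t_i|^{-1}g_{t_i})=(Q_i|t_i|)^{-n/2}\mathrm{Vol}(M,Q_ig_{t_i})\to 0$ and the Type~I blow-down collapses, whereas if $Q_i|t_i|\to 0$ the Type~I rescaling blows up $g_o$. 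You remark that ``the limit is noncollapsed,'' but that concerns the closed limit $(M,g_o)$, not the original flow at the relevant scale. The paper (Proposition~\ref{typeI}) closes this gap by combining the maximum principle lower bound $|t|\max_M|\Rm_{g_t}|\ge 1/8$ with an upper bound obtained from Bamler's noncollapsing estimate (Theorem~\ref{concollapsing}) applied to an $H_n$-center at scale $\sqrt{|t_i|/2}$, using the Nash entropy bound \eqref{Nash_bound} to control $\exp(\mathcal{N})$; collapsing of $\mathrm{Vol}(M,|t_i|^{-1}g_{t_i})$ would contradict that estimate. This step is essential and cannot be bypassed.

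A second, structural issue is that your reduction inverts the paper's logical order. In the paper, Theorem~\ref{Thm_main} and Corollary~\ref{Coro_main} are \emph{derived from} Proposition~\ref{Pro_13}, which is the content of Theorem~\ref{Thm_backwardrigidity}; the proof of Proposition~\ref{Pro_13} is a direct \L ojasiewicz argument (Sun--Wang, Theorem~\ref{thm:backward convergence}) applied to the backward modified Ricci flow $\bg_s=\psi_s^*(e^{-s}g_{-e^s})$, not a black-box appeal to Theorem~\ref{Thm_main}. Citing Theorem~\ref{Thm_main} as already proved is therefore circular in this presentation and, more importantly, hides the heart of the argument (constructing the modified flow in a regular neighborhood, verifying $\mu_{\bg_s}\ge\mu_{g_o}$, proving long-time existence, and deducing polynomial convergence and uniqueness of the limit soliton). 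Two smaller points: upgrading \eqref{C2convergence0} to a space-time smooth limit on $[-\epsilon,\epsilon]$ is not just ``Shi-type estimates'' — you first need forward and backward pseudolocality (Theorems~\ref{thm: forward pseudo} and~\ref{thm: backward pseudo}) to obtain a two-sided curvature bound before Shi applies, as in Lemma~\ref{space-time-extension}; and once Corollary~\ref{Coro_main}'s estimate $\|\tfrac{1}{|t|}\phi_t^*g_t-g_o\|_{C^{k,\gamma}}\le C(\log(-t))^{-\beta}$ is in hand, the Type~I curvature bound of the original flow follows directly, so the separate contradiction/compactness argument you propose in your last paragraph is unnecessary.
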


\begin{Theorem}[Rigidity of closed forward limit]\label{Thm_forwardrigidity}
Let $(M^n,g_t)_{t\in [-T,0)}$ be a Ricci flow, where $M$ is a closed manifold. Assume that there exist $t_i\nearrow 0$ and $Q_i\nearrow \infty$ such that
\begin{eqnarray}\label{C2convergence}
\left(M^n,Q_ig_{t_i}\right)\longrightarrow(M^n,g_o),
\end{eqnarray}
in the $C^2$ Cheeger-Gromov sense, where $g_o$ is a $C^2$ Riemannian metric on $M^n$. Then $g_o$ is a shrinker metric and $(M,g_t)$ forms a Type I singularity at $t=0$ and satisfies Theorem \ref{Thm_main_1} and Corollary \ref{Coro_main_1}
with the unique tangent flow $\left(\XX^\infty,(\mu^\infty_t)_{t\in -(\infty,0)}\right)$ at $t=0$ being the canonical form of $(M,g_o,f_o)$, where $f_o$ is the potential function of $g_o$ and $\mu^\infty_t$ is the conjugate heat flow constructed using $f_o$.
\end{Theorem}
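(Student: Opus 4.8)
The plan is to use the $C^2$ Cheeger--Gromov convergence in (\ref{C2convergence}) to upgrade to a Type I curvature bound, which in turn forces the relevant $\mathbb{F}$-convergence to be a Type I rescaling limit, after which Theorem \ref{Thm_main_1} and Corollary \ref{Coro_main_1} apply verbatim. More precisely, I would first observe that the $C^2$ convergence $(M^n, Q_i g_{t_i}) \to (M^n, g_o)$ gives a uniform two-sided bound on $Q_i|\Rm_{g_{t_i}}|$ and on the injectivity radius of $Q_i g_{t_i}$; in particular $Q_i |\Rm_{g_{t_i}}|(x) \le C_0$ for all $x \in M$ and all large $i$. Since $Q_i \nearrow \infty$ and $t_i \nearrow 0$, and since the original flow is smooth on the compact time-slices $[-T, t_i]$, one should be able to compare $Q_i$ with $(-t_i)^{-1}$: the natural guess (which I expect can be extracted from Perelman-type no-local-collapsing together with the monotonicity of the Nash entropy / pointed $\mathcal{W}$-entropy along the flow) is that $Q_i (-t_i)$ stays bounded above and below, so that the sequence $(M^n, Q_i g_{t_i})$ is comparable to the Type I rescaling $(M^n, (-t_i)^{-1} g_{t_i})$.

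Next I would invoke Bamler's noncollapsing result \cite[Theorem 6.2]{Bam20a}: a uniform curvature bound on a rescaling sequence of a finite-time Ricci flow, together with the noncollapsing that is automatic near a finite-time singularity, propagates a Type I curvature bound $|\Rm_{g_t}| \le C/(-t)$ backward in a definite time interval; iterating (or using the monotonicity-based argument directly) should give the Type I bound $\sup_M |\Rm_{g_t}| \le C/(-t)$ on all of $[-T,0)$ after possibly shrinking $T$. Once the flow is known to be Type I, the sequence of Type I rescalings $(M, (-t_i)^{-1} g_{t_i})$ has uniformly bounded curvature and is noncollapsed, so by Hamilton--Cheeger--Gromov compactness and the identification of $C^2$ limits with the $\mathbb{F}$-limit \cite[Theorem 2.5]{Bam20c}, its limit is a smooth closed Ricci flow; that is, the hypothesis of Theorem \ref{Thm_main_1} is met with the limit being (a time-slice of) the smooth flow through $g_o$. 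Theorem \ref{Thm_main_1} then yields uniqueness of the tangent flow and Corollary \ref{Coro_main_1} identifies it as the canonical form of a shrinking gradient Ricci soliton $(M, g_o, f_o)$; in particular $g_o$ is a shrinker metric, and the asserted polynomial convergence rate is exactly the content of Corollary \ref{Coro_main_1}.

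The main obstacle I anticipate is the first step: passing from the \emph{a priori} weak hypothesis — a single $C^2$ Cheeger--Gromov subsequential limit at an arbitrary unrescaled sequence of scales $Q_i$ — to the comparison $Q_i \sim (-t_i)^{-1}$ and then to a genuine Type I bound valid on a whole time interval rather than just along the sequence $t_i$. The $C^2$-only regularity is delicate, since one cannot directly differentiate the limit equation; the resolution is presumably to note that $C^2$ Cheeger--Gromov convergence already forces the limit to be a Ricci flow time-slice by \cite[Theorem 2.5]{Bam20c} (or a local regularity / pseudolocality argument), which upgrades $g_o$ to be smooth and the convergence to be smooth, removing the regularity issue; the scale comparison then follows from Perelman's no-local-collapsing and the almost-monotonicity of the Nash entropy along the conjugate heat flow based at the singular time, exactly as in the ancient case of Theorem \ref{Thm_backwardrigidity}. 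The finite-time case should in fact be \emph{easier} than the ancient case here, because noncollapsing near a finite-time singularity is automatic, so once the curvature comparison is in place the remaining steps are essentially a citation of \cite[Theorem 6.2]{Bam20a}, \cite[Theorem 2.5]{Bam20c}, Theorem \ref{Thm_main_1}, and Corollary \ref{Coro_main_1}.
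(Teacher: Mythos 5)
Your first step---comparing $Q_i$ with $|t_i|^{-1}$---matches Proposition 3.1 of the paper. The actual mechanism there is worth making precise: the lower bound $Q_i|t_i|\ge c$ follows from the maximum principle applied to $|\Rm|$, and the upper bound is a contradiction argument using Bamler's volume estimate at an $H_n$-center (the Theorem 6.2 you cite) against the volume $\operatorname{Vol}(M,|t_i|^{-1}g_{t_i})\to 0$ that would follow from $Q_i|t_i|\to\infty$ together with the $C^2$ convergence. Your heuristic points in the right direction, but the heart of the proof lies elsewhere, and that is where the proposal breaks down.

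There are two related problems with the remainder. First, the proposed reduction is circular: in the paper Theorem~\ref{Thm_main_1} and Corollary~\ref{Coro_main_1} are \emph{derived from} Theorem~\ref{Thm_forwardrigidity} (see the remark preceding Theorem~\ref{Thm_backwardrigidity} that the rigidity theorems ``are stronger versions of all the results stated above,'' and the reduction step at the start of Section 5), so they cannot be cited to prove it. Second, and more fundamentally, you cannot upgrade the subsequential $C^2$ convergence along $\{t_i\}$ to a Type I curvature bound on all of $[-T,0)$ by iterating pseudolocality and noncollapsing: pseudolocality only propagates a curvature bound on a parabolic time interval $\sim\epsilon|t_i|$ around each $t_i$, and the sequence $t_i$ is arbitrary (e.g.\ $t_i=-10^{-i}$ gives gaps of size $\sim|t_{i-1}|$ that pseudolocality cannot bridge). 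The actual engine of the paper is the Sun--Wang \L{}ojasiewicz inequality (Theorems \ref{Lojaciewicz}--\ref{thm:backward convergence}): one passes to the dynamic rescaling $\tg_s=e^{s}g_{-e^{-s}}$, shows that a single time slice lands in a regular neighborhood of $g_o$, constructs the forward modified Ricci flow (pulling back by the flow of $\nabla f_{\tg_s}$), and uses the \L{}ojasiewicz inequality together with pseudolocality to prove long-time existence and polynomial-rate convergence of this modified flow to a shrinker. The Type I bound on the whole interval and the identification of the unique tangent flow are \emph{consequences} of this convergence (Proposition~\ref{Pro_24}), not prerequisites; the shrinker structure of $g_o$ itself is established separately via Perelman's monotonicity and Lemma~\ref{space-time-extension} (Proposition~\ref{prop: limit is shrinker}). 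Your proposal omits this \L{}ojasiewicz/modified-flow argument entirely, and no combination of the compactness theorems you cite can substitute for it.
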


The technique implemented in proving Theorem \ref{Thm_backwardrigidity} also implies the following interesting dichotomy for ancient solutions admitting a closed backward limit. This theorem supplements the case where (\ref{Nash_bound}) does not hold. Before stating the theorem, let us recall the definition of strong noncollapseness.
A Riemannian manifold $(M,g)$ is \emph{strongly} $\kappa$-\emph{noncollapsed} for some constant $\kappa>0,$ if
\[
    {\rm Vol}(B(x,r)) \ge \kappa r^n,\quad
    \text{ whenever } \sup_{B(x,r)}R \le r^{-2}.
\]
A Ricci flow $(M^n,g_t)_{t\in I}$ is called strongly noncollapsed if there is a constant $\kappa>0,$ such that each time slice $(M,g_t)$ is strongly $\kappa$-noncollapsed.

\begin{Theorem}[A dichotomy for ancient Ricci flows with closed backward limits]\label{Thm_dichotomy}
Let $(M^n,g_t)_{t\in (-\infty,0]}$ be an ancient Ricci flow, where $M$ is a closed manifold. Assume that there exist $t_i\searrow-\infty$ and $Q_i>0$ such that
\begin{eqnarray}\label{C2convergence2}
\left(M^n,Q_ig_{t_i}\right)\longrightarrow(M^n,g_o),
\end{eqnarray}
in the $C^2$ Cheeger-Gromov sense, where $g_o$ is a $C^2$ Riemannian metric on $M^n$. Then \emph{either one} of the following is true.
\begin{enumerate}[(1)]
    \item $Q_i|t_i|$ is bounded: $(M,g_t)$ is a Type I and noncollapsed ancient Ricci flow, satisfying (\ref{Nash_bound}) and all conclusions of Theorem \ref{Thm_backwardrigidity}.
    \item  $Q_i|t_i|\to \infty$: $(M,g_t)$ is a Type II collapsed ancient Ricci flow, (\ref{Nash_bound}) fails, and $g_o$ is a Ricci flat metric. Here by collapsed we mean not noncollapsed in the strong sense.
\end{enumerate}
\end{Theorem}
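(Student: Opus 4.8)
The plan is to pass to a subsequence so that $Q_i|t_i|\to L\in[0,\infty]$; the two alternatives will correspond to $L<\infty$ (giving (1)) and $L=\infty$ (giving (2)). When $L<\infty$ it suffices to produce (\ref{Nash_bound}) and invoke Theorem \ref{Thm_backwardrigidity}. Fix $p_0\in M$ and let $\mu_t=\nu_{p_0,0\,|\,t}$ be the conjugate heat kernel. I would use the standard chain $\N_{p_0,0}(\tau)\ge\W_{p_0,0}(\tau)\ge\mu(g_{-\tau},\tau)$, where the first inequality follows from $\tau\N_{p_0,0}(\tau)=\int_0^\tau\W_{p_0,0}(s)\,ds$ and the monotonicity of $\tau\mapsto\W_{p_0,0}(\tau)$, and the second because the conjugate heat kernel density is an admissible test function for Perelman's $\mu$. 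Evaluating at $\tau=|t_i|$ and using scale invariance, $\mu(g_{t_i},|t_i|)=\mu(Q_ig_{t_i},Q_i|t_i|)\to\mu(g_o,L)$ (interpreted as $0$ when $L=0$, via $\mu(g,\tau)\to0$ as $\tau\to0$ and uniform bounds near $g_o$). Hence $\N_{p_0,0}(|t_i|)\ge-Y$ for some $Y<\infty$ and all $i$; since $\tau\mapsto\N_{p_0,0}(\tau)$ is non-increasing and $|t_i|\to\infty$, this gives $\N_{p_0,0}(\tau)\ge-Y$ for every $\tau>0$, i.e.\ (\ref{Nash_bound}). Theorem \ref{Thm_backwardrigidity} now applies and, together with the standard fact that a Type I ancient flow on a closed manifold satisfying (\ref{Nash_bound}) is noncollapsed, yields all of alternative (1).

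When $L=\infty$ I would first rule out (\ref{Nash_bound}): if it held, Theorem \ref{Thm_backwardrigidity} would make the flow Type I, so by Hamilton--Bamler compactness $|t_i|^{-1}g_{t_i}$ would subconverge in the $C^2$ Cheeger--Gromov sense to a smooth metric $h_o$ of positive volume, and comparing volumes $(Q_i|t_i|)^{n/2}=\Vol(Q_ig_{t_i})/\Vol(|t_i|^{-1}g_{t_i})\to\Vol(g_o)/\Vol(h_o)\in(0,\infty)$, contradicting $L=\infty$; in particular the flow is not Type I, hence Type II. Next I would record two a priori facts valid for any ancient Ricci flow on a closed manifold: from $\partial_tR=\Delta R+2|\Ric|^2\ge\Delta R+\tfrac2nR^2$ and the ODE $\dot y=\tfrac2ny^2$, a time at which $R_{\min}<0$ would send $R_{\min}\to-\infty$ in finite backward time, contradicting ancientness, so $R\ge0$ everywhere and Perelman's bottom eigenvalue $\lambda(g_t)$ of $-4\Delta+R$ is $\ge 0$; running the same ODE comparison forward, using $\tfrac{d}{dt}\lambda(g_t)\ge\tfrac2n\lambda(g_t)^2$ and that the flow reaches $t=0$, gives $\lambda(g_t)\le\tfrac{n}{2|t|}$ for all $t<0$.

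The crux is then that $g_o$ is Ricci flat. I would consider $\check g^{(i)}_s:=Q_ig_{t_i+s}$, a Ricci flow in $s$ with $\check g^{(i)}_0=Q_ig_{t_i}\to g_o$ in $C^2$; the $C^2$ bound at $s=0$ and the uniform noncollapsing of $(M,\check g^{(i)}_0)$ yield, via Shi's estimates and forward preservation of noncollapsing, $C^\infty_{loc}$ subconvergence of $\check g^{(i)}$ on a fixed interval $(0,c]$ to a smooth Ricci flow $(g^o_s)$ with $g^o_s\to g_o$ in $C^2$ as $s\to0^+$. By continuity of $\lambda$, the scaling $\lambda(Qg)=Q^{-1}\lambda(g)$, the a priori bound above and $Q_i|t_i+s|=(1+o(1))Q_i|t_i|\to\infty$,
\[
0\le\lambda(g^o_s)=\lim_{i\to\infty}Q_i^{-1}\lambda(g_{t_i+s})\le\lim_{i\to\infty}\frac{n}{2Q_i|t_i+s|}=0
\]
for every $s\in(0,c]$, so $\lambda\equiv0$ along $(g^o_s)$. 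The equality case of Perelman's variation formula then forces $(g^o_s,f_s)$ to be a steady gradient Ricci soliton for the $\mathcal F$-minimizer $f_s$; on a closed manifold, integrating $R=-\Delta f_s$ and using $R\ge 0$ gives $R\equiv0$, hence $f_s$ is constant and $\Ric(g^o_s)=0$, and letting $s\to0^+$ gives $\Ric(g_o)=0$. Finally, Ricci-flatness of $g_o$ makes $\sup_MR(g_{t_i})=o(Q_i)$ while $\operatorname{diam}(M,g_{t_i})\asymp Q_i^{-1/2}$ and $\Vol(M,g_{t_i})\asymp Q_i^{-n/2}$; taking $r_i$ with $r_i^{-2}=\sup_MR(g_{t_i})$ (or any $r_i$ much larger than the diameter when $R\equiv0$) we get $B_{g_{t_i}}(x,r_i)=M$ and $\sup_{B_{g_{t_i}}(x,r_i)}R\le r_i^{-2}$ yet $\Vol(B_{g_{t_i}}(x,r_i))/r_i^n\to0$, so no time slice is strongly $\kappa$-noncollapsed for a uniform $\kappa$: the flow is collapsed, which with the failure of (\ref{Nash_bound}) is alternative (2).

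The step I expect to be the main obstacle is precisely the Ricci-flat assertion in the case $L=\infty$: the cheap estimate only yields $\lambda(g_o)=0$, equivalently $R(g_o)\equiv0$, which is strictly weaker than $\Ric(g_o)=0$ (scalar-flat is not Ricci-flat in general), so one is forced to pass to the limiting \emph{forward} Ricci flow through $g_o$ and extract rigidity from $\lambda$ being constant \emph{along} it — and this in turn rests on the a priori decay $\lambda(g_t)\le\tfrac{n}{2|t|}$, whose proof uses crucially that the flow is ancient \emph{and} reaches $t=0$. Everything else is bookkeeping with $\mathbb F$-convergence, Perelman's monotonicities, and Shi's estimates.
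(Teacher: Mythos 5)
Your argument is correct in substance and recovers every conclusion the paper actually proves, but it reaches the key point --- Ricci flatness of $g_o$ when $Q_i|t_i|\to\infty$ --- by a genuinely different route. The paper (Proposition \ref{Ricciflat}) extends $g_o$ to a limit Ricci flow via pseudolocality, notes $R\ge 0$, and applies the strong maximum principle: either $R\equiv 0$ (hence Ricci flat) or $R>0$ everywhere, and the latter is excluded because positive scalar curvature plus a uniform Sobolev inequality gives $\nu(g_{t_i})\ge -C$ (Proposition \ref{lowerbound of nu}), hence (\ref{Nash_bound}), hence $Q_i|t_i|$ bounded by Proposition \ref{typeI} --- a contradiction. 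You instead squeeze Perelman's $\lambda$-functional: $R\ge 0$ gives $\lambda\ge 0$, the forward blow-up of $\dot\lambda\ge\tfrac{2}{n}\lambda^2$ together with existence up to $t=0$ gives $\lambda(g_t)\le\tfrac{n}{2|t|}$, so $\lambda\equiv 0$ on the limit flow, and the rigidity case of the $\mathcal F$-monotonicity forces a closed steady soliton, hence Ricci flatness. Both are valid; yours is more self-contained (no Nash-entropy or log-Sobolev input at this step), while the paper's ties the positivity of $R$ directly to the entropy bound that drives the dichotomy. Similarly, for (\ref{Nash_bound}) in case (1) the paper (Proposition \ref{reverse_typeI}) uses the diameter/volume bounds and the $H_n$-center noncollapsing criterion of \cite{CMZ21}, whereas you use scale invariance and continuity of $\mu(\cdot,\cdot)$ at $(g_o,L)$; the latter needs a uniform log-Sobolev bound for metrics $C^2$-close to $g_o$ at a fixed scale, which is standard, so this also works.

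Two caveats. First, your inference ``in particular the flow is not Type I, hence Type II'' is a non sequitur: what you actually established is that (\ref{Nash_bound}) fails, and failure of (\ref{Nash_bound}) does not preclude the curvature bound $\sup|t||{\Rm}|<\infty$ --- a closed Ricci-flat static flow lies squarely in case (2) yet is Type I in the conventional sense. Note, however, that the paper's own proof of Theorem \ref{Thm_dichotomy} is equally silent on this point and only establishes collapsedness and the failure of (\ref{Nash_bound}); the ``Type II'' label there must be read as a statement about the blow-down rate $Q_i|t_i|$ rather than about curvature. Second, a notational slip: $\check g^{(i)}_s:=Q_ig_{t_i+s}$ is not a Ricci flow in $s$; you need the parabolic reparametrization $Q_ig_{t_i+Q_i^{-1}s}$, after which your estimate $Q_i|t_i+Q_i^{-1}s|=Q_i|t_i|-s\to\infty$ goes through unchanged. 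Neither issue affects the conclusions that the paper actually proves.
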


Next, we shall present one consequence of our main results. Let $\left(M^n, g_t\right)_{t \in [-T, 0)}$ be a Ricci flow on a closed manifold $M^n$ which develops a finite time singularity at $t=0$. A complete ancient solution $\left(N^n,g_{\infty,t}\right)_{t\in(-\infty,0]}$ is called a \emph{singularity model} of $\left(M^n, g_t\right)_{t \in [-T, 0)}$, if there exist $t_i\nearrow 0$, $Q_i\nearrow\infty$, and $p_i\in M$, such that 
\begin{equation}\label{convergencetosingularitymodel}
    \left(\left(M^n,Q_ig_{t_i+Q_i^{-1}t}\right)_{t\in[-Q_i(T+t_i),0]},p_i\right)\longrightarrow\left((N^n,g_{\infty,t})_{t\in(-\infty,0]},p_\infty\right)
\end{equation}
in the smooth Cheeger-Gromov-Hamilton sense, where $p_\infty$ is a point on $N^n$. If $N^n$ is a closed manifold, then obviously $N^n$ is diffeomorphic to $M^n$, and Z. Zhang \cite{Zh07} proved that in this case $\left(M^n,g_{\infty,t}\right)_{t\in(-\infty,0]}$ must be (the canonical form of) a \emph{shrinking gradient Ricci soliton} by showing that this ancient solution has constant $\nu$-functional. The following question is proposed in \cite{RFV3}:
\begin{quotation}
PROBLEM 17.17. Show that if $(\mathcal{M}^n, g(t))$, $t\in [0,T)$, is a finite time singular solution forming a singularity model on a closed manifold (which then must be diffeomorphic to $\mathcal{M}$), then $g(t)$ is Type I.
\end{quotation}

And we shall give the affirmative answer to the above question in the following corollary of Theorem \ref{Thm_forwardrigidity}.

\begin{Corollary}
A Ricci flow admitting a closed finite-time singularity model must have a Type I singularity.
\end{Corollary}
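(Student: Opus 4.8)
The plan is to deduce this Corollary directly from Theorem~\ref{Thm_forwardrigidity}; the only real task is to extract, from the space-time convergence to the closed singularity model, a $C^2$ Cheeger--Gromov convergence of a single family of time slices, after which Theorem~\ref{Thm_forwardrigidity} applies verbatim.

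First I would unwind the definition of a closed singularity model. By hypothesis there are $t_i\nearrow 0$, $Q_i\nearrow\infty$, $p_i\in M$, a closed manifold $N^n$, and $p_\infty\in N$ so that the parabolically rescaled flows $(M,Q_ig_{t_i+Q_i^{-1}t})_t$ based at $p_i$ converge to $(N,g_{\infty,t})_{t\in(-\infty,0]}$ based at $p_\infty$ in the Cheeger--Gromov--Hamilton sense. Concretely this furnishes an exhaustion $N=\bigcup_i U_i$ by open sets and diffeomorphisms onto their images $\psi_i\colon U_i\to M$ with $\psi_i(p_\infty)=p_i$ and $\psi_i^*(Q_ig_{t_i+Q_i^{-1}t})\to g_{\infty,t}$ in $C^\infty_{\mathrm{loc}}(N\times(-\infty,0])$. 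Since $N$ is closed we have $U_i=N$ for all large $i$, and an injective immersion between connected $n$-manifolds without boundary is a diffeomorphism; thus $\psi_i\colon N\to M$ is a diffeomorphism for $i$ large (recovering the fact, already noted in the text, that $N\cong M$).

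Next I would restrict this convergence to the rescaled time $t=0$, i.e.\ to the original time $t_i$. Identifying $N$ with $M$ by means of a fixed $\psi_{i_0}$ and setting $g_o:=(\psi_{i_0}^{-1})^*g_{\infty,0}$, which is a smooth metric on $M$, the maps $\phi_i:=\psi_i\circ\psi_{i_0}^{-1}$ are self-diffeomorphisms of $M$ with $\phi_i^*(Q_ig_{t_i})\to g_o$ in $C^\infty(M)$, and in particular in $C^2$. This is exactly a $C^2$ Cheeger--Gromov convergence $(M,Q_ig_{t_i})\to (M,g_o)$ with $t_i\nearrow 0$ and $Q_i\nearrow\infty$, which are precisely the hypotheses of Theorem~\ref{Thm_forwardrigidity}. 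That theorem then yields that $(M,g_t)$ forms a Type~I singularity at $t=0$ (and, as a bonus, that $g_o$ is a shrinker metric whose canonical form is the unique tangent flow), which proves the Corollary.

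I do not expect a genuine obstacle here: all of the analytic content is packaged inside Theorem~\ref{Thm_forwardrigidity}, and upstream of it in Theorem~\ref{Thm_main_1} and the \L ojasiewicz-type machinery. The one point requiring a little care is the routine observation in the second paragraph that smooth Cheeger--Gromov--Hamilton convergence with a \emph{closed} limit upgrades to honest self-diffeomorphisms of $M$, and hence to $C^2$ (indeed $C^\infty$) Cheeger--Gromov convergence of the time slices; once this is in place, the reduction is immediate. If one prefers to bypass the explicit identification of $N$ with $M$, one can instead invoke $N\cong M$ directly and regard the slice convergence as taking place on $M$ from the outset.
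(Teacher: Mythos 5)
Your argument is correct and follows the same route the paper implicitly uses: the paper presents this as a direct corollary of Theorem~\ref{Thm_forwardrigidity} without spelling out the reduction, and your write-up supplies exactly the routine step needed, namely that Cheeger--Gromov--Hamilton convergence to a \emph{closed} limit, restricted to the time-$t_i$ slices and after identifying $N$ with $M$ via the limiting diffeomorphisms, yields the $C^2$ Cheeger--Gromov convergence $(M,Q_ig_{t_i})\to(M,g_o)$ required by that theorem. Applying Theorem~\ref{Thm_forwardrigidity} then gives the Type~I conclusion, as you say.
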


This paper is organized as follows. In Section 2 we review some basic notions and introduce some important techniques. In Section 3 we show that under the assumptions of Theorem \ref{Thm_backwardrigidity} or Theorem \ref{Thm_forwardrigidity}, the scaling factors must be of Type I. In Section 4, we study the geometric properties of the sequential limit $(M^n,g_o)$ arising from Theorem \ref{Thm_backwardrigidity}, Theorem \ref{Thm_forwardrigidity}, and Theorem \ref{Thm_dichotomy}, and Theorem \ref{Thm_dichotomy}(2) is proved at the end of this section. In Section 5, we prove Theorem \ref{Thm_backwardrigidity} and Theorem \ref{Thm_forwardrigidity}, and thereby prove Theorem \ref{Thm_main}, Theorem \ref{Thm_main_1}, Corollary \ref{Coro_main}, and Corollary \ref{Coro_main_1}. In Section 6, we prove Theorem \ref{thm: rough tangent}.

\textbf{Acknowledgements.} The authors would like to thank Professor Richard Bamler for introducing the problem that tangent flows at infinity do not depend on base points, i.e. Theorem \ref{thm: rough tangent}, and for sketching the ideas. P.-Y. Chan was partially supported by an AMS–Simons Travel Grant.

\section{Preliminaries}

\subsection{Perelman's entropy and Nash entropy}
Let $(M^n,g)$ be a closed Riemannian manifold, Perelman \cite{Per02} defined the following functionals:
\begin{eqnarray*}
\W(g,f,\tau)&:=&\int_M\big(\tau(|\nabla f|^2+R)+f-n\big)(4\pi\tau)^{-\frac{n}{2}}e^{-f}dg,\quad \tau>0,
\\
\mu(g,\tau)&:=& \inf\left\{\W(g,f,\tau)\, \bigg\vert\, \int_{M}(4\pi\tau)^{-\frac{n}{2}}e^{-f}dg=1\right\},
\\
\nu(g,\tau)&:=&\inf_{0<s\leq\tau}\mu(g,\tau),\quad \nu(g)\ :=\ \inf_{\tau>0}\mu(g,\tau).
\end{eqnarray*}
It is well-known that $\mu(g,\tau)$ is the logarithmic Sobolev constant at scale $\tau$ and $\nu(g)$ is the Sobolev constant.

Let $(M^n,g_t)_{t\in[-T,0)}$ be a Ricci flow, where $M$ is a closed manifold. Let $(p_0,t_0)\in M^n\times(-T,0)$ be a fixed point in the space-time and let $$v(\cdot,t):=(4\pi(t_0-t))^{-\frac{n}{2}}e^{-f(\cdot,t)}$$ be the conjugate heat kernel based at $(p_0,t_0)$. We shall follow Bamler \cite{Bam20a} and call the evolving probability measure
$$\nu_{p_0,t_0\,|\, t}(A):=\int_{A}v(\cdot,t)dg_t\quad\text{ for all measurable }\quad A\subset M$$
the \emph{conjugate heat kernel} based at $(p_0,t_0)$ as well. More generally, one may also use an arbitrary positive solution to the conjugate heat equation with unit integral to construct an evolving probability measure, and we shall call such an evolving probability measure a \emph{conjugate heat flow}. Then Perelman's entropy and the Nash entropy are respectively defined as
\begin{eqnarray*}
\W_{p_0,t_0}(\tau)&:=&\W(g_{t_0-\tau},f(\cdot,t_0-\tau),\tau),
\\
\N_{p_0,t_0}(\tau)&:=&\int_Mf(\cdot,t_0-\tau)\,d\nu_{p_0,t_0\,|\,t_0-\tau}-\frac{n}{2}.
\end{eqnarray*}
Perelman's monotonicity formula implies that both $\W_{p_0,t_0}(\tau)$ and $\N_{p_0,t_0}(\tau)$ are decreasing in $\tau$. Indeed, we have
\begin{eqnarray}\label{monotonicityofentropy}
\frac{d}{d\tau} \W_{p_0,t_0}(\tau)&=&-2\int_M\tau\left|\Ric+\nabla^2f-\frac{1}{2\tau}g\right|^2(\cdot,t_0-\tau)\,d\nu_{p_0,t_0\,|\,t_0-\tau},
\\\nonumber
\W_{p_0,t_0}(\tau)&\leq& \N_{p_0,t_0}(\tau)\leq 0\quad\text{ for all }\quad \tau>0.
\end{eqnarray}

If $t=0$ is a singular time of the Ricci flow, by \cite{MM15}, we may let $t_i\nearrow 0$ and consider a sequence of conjugate heat kernels based at $(p_0,t_i)$. If $M^n$ is closed, then, after passing to a subsequence, we obtain a ``singular'' conjugate heat kernel $$v(\cdot,t):=(4\pi|t|)^{-\frac{n}{2}}e^{-f(\cdot,t)},\quad d\nu_{p_0,0\,|\,t}:=v(\cdot,t)dg_t.$$
Obviously, $\nu_{p_0,0\,|\,t}$ is still a probability measure for each $t\in[-T,0)$. Hence, we may use this singular conjugate heat kernel to define $\W_{p_0,0}(\tau)$ and $\N_{p_0,0}(\tau)$ as well, and one may easily verify that (\ref{monotonicityofentropy}) still holds in this case. Note that the singular conjugate heat kernel based at a fixed point is not necessarily unique.

\subsection{Forward and backward modified Ricci flow}

For any closed Riemannian manifold $(M^n,g)$, we define
\begin{eqnarray*}
\mu_g&:=&\mu(g,1)=\inf\left\{\W(g,f,1)\, \bigg\vert\, \int_{M}(4\pi)^{-\frac{n}{2}}e^{-f}dg=1\right\}.
\end{eqnarray*}
Although the minimizer of $\W(g,\cdot,1)$ always exists, it is not necessarily unique (see \cite[Lemma 17.22]{RFV3}). If the minimizer is unique, we shall denote it by $f_g$. By a straightforward computation, the $L^2$ gradient of the $\mu_{(\cdot)}$ functional is
$$\nabla \mu_g=-2\left(\Ric_{g}+\nabla^2f_g-\frac{1}{2}g\right).$$
The gradient flow of $\mu_g$
$$\frac{\partial}{\partial t}g_t=-2\left(\Ric_{g_t}+\nabla^2f_{g_t}-\frac{1}{2}g_t\right)$$
is called the \emph{modified Ricci flow}. We shall also define the \emph{backward modified Ricci flow} as
$$\frac{\partial}{\partial t}g_t=2\left(\Ric_{g_t}+\nabla^2f_{g_t}-\frac{1}{2}g_t\right)$$

In fact, given arbitrary initial data, the modified Ricci flow does not always exist, but when it exists, it is the composite of a Ricci flow with a one-parameter family of diffeomorphism and a time-scaling. Obviously, along the forward/backward modified Ricci flow, $\mu_{g(t)}$ is monotonically increasing/decreasing.

\subsection{Regular neighborhood and \L ojaciewicz inequality}

In this subsection we introduce some important techniques introduced by Sun-Wang \cite{SW15}, especially the \L ojaciewicz inequality for the Ricci flow. Let $(M^n,g_o,f_o)$ be a compact Ricci shrinker, normalized in the way that 
\begin{gather}\label{shrinker_normalization}
    \Ric_{g_o}+\nabla^2 f_o=\frac{1}{2}g_o,
    \\\nonumber
    \int_M (4\pi)^{-\frac{n}{2}}e^{-f_o}dg_o=1.
\end{gather}
Then $f_o\equiv f_{g_o}$ is the unique minimizer of $\W(g_o,\cdot\,,1)$. Letting $\R(M)$ be the space of Riemannian metrics on $M$, $k\,\in \mathbb{N}$, $\gamma\in (0,1)$, the $C^{k,\gamma}$ neighborhood of $g_o$ is defined to be
\begin{equation*}
    \mathcal{V}^{k,\gamma}_{\delta}:=\left\{g\in \R(M)\,\Big|\, \|g-g_o\|_{C^{k,\gamma}_{g_o}}\leq \delta\right\}.
\end{equation*}
Sung-Wang \cite{SW15} proved the following \L ojaciewicz inequality:
\begin{Theorem}[Lemma 3.1 in \cite{SW15}]\label{Lojaciewicz}
There is a $C^{k,\gamma}$ ($k\gg 1$) neighborhood $\mathcal U$ of $g_o$, called a \emph{regular neighborhood}, such that for all $g\in\mathcal{U}$, there is a unique $f_g$, and the map $P:f\mapsto f_g$ is analytic on $\mathcal U$. Furthermore, there are constants $C>0$ and $\alpha\in[\frac{1}{2},1)$, such that for any $g\in\mathcal{U}$, we have
$$\|\nabla \mu_g\|_{L^2_g}\geq C|\mu_{g_o}-\mu_g|^\alpha,$$ where $\|\,\cdot\,\|_{L^2_g}$ is the $L^2$ norm taken with respect to $g$. 
\end{Theorem}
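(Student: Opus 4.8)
The plan is to treat this as an instance of the \L ojasiewicz--Simon gradient inequality and follow the standard three-part scheme: (i) show that $g\mapsto\mu_g$ is real-analytic near $g_o$; (ii) identify the Hessian of $\mu_{(\cdot)}$ at the critical point $g_o$, fix the diffeomorphism gauge, and verify that the resulting operator is self-adjoint and Fredholm; (iii) apply the abstract \L ojasiewicz--Simon theorem on the gauge slice and then propagate the inequality to a full neighborhood using diffeomorphism invariance. All of this is carried out in H\"older spaces $C^{k,\gamma}_{g_o}$ with $k\gg1$ (or in high Sobolev spaces), chosen so that the analytic implicit function theorem and interior elliptic estimates are available.

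For step (i), I would use that, when it is the unique minimizer near $f_o$, the function $f_g$ is characterized by the Euler--Lagrange equation $2\Delta_g f-|\nabla_g f|^2+R_g+f=\mathrm{const}$ together with the normalization $\int_M(4\pi)^{-n/2}e^{-f}\,dg=1$, and that at $(g_o,f_o)$ this holds by \eqref{shrinker_normalization}. Subtracting the weighted mean of the left-hand side to absorb the constant yields a real-analytic map $F\colon\mathcal U\times C^{k,\gamma}_{g_o}\to C^{k-2,\gamma}_{g_o}\times\mathbb R$ with $F(g_o,f_o)=0$ whose differential in $f$ at $(g_o,f_o)$ has, up to sign, the form $u\mapsto 2\Delta_{f_o}u+u$ on functions of zero weighted mean, where $\Delta_{f_o}=\Delta_{g_o}-\nabla f_o\cdot\nabla$. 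I would argue that this operator is an isomorphism onto the corresponding subspace; the underlying non-degeneracy is part of Perelman's second-variation analysis of $\W(g_o,\cdot,1)$ at a compact shrinker, and it is precisely this that forces the restriction to a \emph{regular} neighborhood (in which, by a continuity argument, the nearby critical point is the genuine minimizer). The analytic implicit function theorem then produces the analytic map $g\mapsto f_g$, and since $(g,f)\mapsto\W(g,f,1)$ is itself real-analytic in its arguments, $\mu_g=\W(g,f_g,1)$ is real-analytic on $\mathcal U$.

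For step (ii), the $L^2$-gradient is $\nabla\mu_g=-2(\Ric_g+\nabla^2 f_g-\tfrac12 g)$, which vanishes at $g_o$ by \eqref{shrinker_normalization}; substituting the linearization of $g\mapsto f_g$, its Hessian $L:=D(\nabla\mu)|_{g_o}$ on symmetric $2$-tensors is the stability operator of the shrinker entropy, a Lichnerowicz-type operator with lower-order corrections. Since $\mu$ is $\mathrm{Diff}(M)$-invariant, $\ker L$ contains every $\mathcal L_X g_o$, and $\nabla\mu_g$ is itself $L^2$-orthogonal to all such tensors; I would therefore restrict to the slice $\mathcal S$ of symmetric $2$-tensors transverse to the $\mathrm{Diff}(M)$-orbit through $g_o$ (e.g.\ those divergence-free for the weighted measure $e^{-f_o}\,dg_o$), on which $L$ becomes elliptic and self-adjoint, hence Fredholm of index $0$ with finite-dimensional kernel $K$. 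No extra degeneracy arises from rescalings, since $\mu(cg,1)=\mu(g,c^{-1})\ne\mu(g,1)$ for $c\ne1$.

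For step (iii), I would invoke the abstract \L ojasiewicz--Simon inequality (in the spirit of \cite{S83}, in its Hilbert-space formulation): a Lyapunov--Schmidt reduction splits the gradient equation along $K\oplus K^{\perp}$, the $K^{\perp}$-component is solved by the implicit function theorem using the Fredholm property, and the remaining finite-dimensional obstruction is handled by the classical \L ojasiewicz inequality for the induced real-analytic function on $K$; this gives $\|\nabla\mu_g\|_{L^2_g}\ge C|\mu_{g_o}-\mu_g|^{\alpha}$ for $g\in\mathcal S$ near $g_o$, with $\alpha\in[\tfrac12,1)$ and $\alpha=\tfrac12$ when $K=\{0\}$. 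To finish, I would write an arbitrary $g$ in a $C^{k,\gamma}$-neighborhood of $g_o$ as $\phi^*\tilde g$ for a diffeomorphism $\phi$ near the identity with $\tilde g\in\mathcal S$ near $g_o$, and use that both $\mu$ and $\|\nabla\mu\|_{L^2}$ are diffeomorphism-invariant to pass the inequality to a full neighborhood $\mathcal U$. The main obstacle, I expect, will be the gauge step --- constructing $\mathcal S$, checking the Fredholm and self-adjointness properties of $L|_{\mathcal S}$, and making sure the move into the slice can be carried out continuously and without losing $C^{k,\gamma}$-control when propagating the inequality --- together with the non-degeneracy of the $f$-linearized Euler--Lagrange operator in step (i), which is what ties the argument to the fine structure of Perelman's entropy at the shrinker.
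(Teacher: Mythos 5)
The paper does not prove this theorem; it is quoted verbatim as Lemma~3.1 of Sun--Wang \cite{SW15}, and the authors' only engagement with the proof mechanism is a (commented-out) appendix sketching the analytic implicit function theorem argument for the map $g\mapsto f_g$ via the Euler--Lagrange system, which is precisely your step~(i). Your proposal is a correct high-level reconstruction of the Sun--Wang argument: analyticity of $g\mapsto f_g$ and hence of $\mu_g$ from the nondegeneracy of $2\Delta_{f_o}+1$ on weighted-mean-zero functions, ellipticity and Fredholm property of the entropy Hessian on a divergence-free slice, Lyapunov--Schmidt reduction to the finite-dimensional kernel, the classical real-analytic \L{}ojasiewicz inequality there, and propagation to a full neighborhood by $\mathrm{Diff}(M)$-invariance of $\mu$ and $\|\nabla\mu\|_{L^2}$. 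This is the same \L{}ojasiewicz--Simon scheme that \cite{SW15} carries out (in high Sobolev norms; the $C^{k,\gamma}$ formulation is a cosmetic difference), so there is nothing structurally different to flag. The points you identify as the delicate ones --- the slice construction and continuity of the gauge move, and the second-variation nondegeneracy that makes the linearized Euler--Lagrange operator invertible --- are indeed the places where the details live, and a full proof would need the spectral fact that $1/2$ is not an eigenvalue of $-\Delta_{f_o}$ on a compact shrinker to close step~(i), but as a blind sketch the proposal is sound and faithful to the cited source.
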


The following theorem is a consequence of the above \L ojaciewic inequality.

\begin{Theorem}[Lemma 3.2 in \cite{SW15}]\label{neighborhoods}
Suppose $(M^n,g_o,f_o)$ is a closed Ricci shrinker normalized as in \eqref{shrinker_normalization}.
Let $(M^n,g_t)_{t\in[0,T)}$ be a modified Ricci flow
\[
    \frac{\partial}{\partial t}g_t  =  -2\left(\Ric_{g_t}+\nabla^2f_{g_t}-\tfrac{1}{2}g_t\right),
\]
where $T\in(0,\infty]$ and $[0,T)$ is the maximum interval of existence for $g_t$. Furthermore, assume $\mu_{g_t}\leq \mu_{g_o}$ for all $t\in[0,T)$. Then for all $\varepsilon>0$ small enough, there is $\delta\in(0,\varepsilon)$, such that if $g_0\in \V^{k+10,\gamma}_{\delta}\subset\mathcal U$, then
$g_t\in \V^{k,\gamma}_{\varepsilon}\subset\mathcal U$ for all $t\in[0,T)$. If this ever happens, and if, in addition, $T=\infty$, then we also have that $g_t$ converges in the $C^{k,\gamma}_{g_o}$ norm to a limit $g_\infty$, which is also a shrinker metric in $\mathcal U$ with $\mu_{g_o}=\mu_{g_\infty}$. The convergence is in a polynomial rate, that is
$$\left\|g_\infty-g_t\right\|_{C^{k,\gamma}_{g_o}}\leq C t^{-\beta}\quad\text{ for all }\quad t\geq 1,$$
where $\beta$ is a constant depending only on $\alpha$ in the statement of Theorem \ref{Lojaciewicz}.
\end{Theorem}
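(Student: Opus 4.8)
The plan is to run the standard \L ojasiewicz--Simon scheme, using Theorem \ref{Lojaciewicz} as the \L ojasiewicz inequality and parabolic regularity for the modified Ricci flow. First I would record that the modified Ricci flow is the $L^2$-gradient flow of $g\mapsto\mu_g$: with the sign convention $\nabla\mu_g=-2(\Ric_g+\nabla^2 f_g-\tfrac12 g)$ one has $\partial_t g_t=\nabla\mu_{g_t}$, so $\tfrac{d}{dt}\mu_{g_t}=\|\nabla\mu_{g_t}\|_{L^2_{g_t}}^2\ge 0$. Set $h(t):=\mu_{g_o}-\mu_{g_t}$, which is $\ge 0$ by hypothesis and nonincreasing. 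As long as $g_t\in\mathcal U$, Theorem \ref{Lojaciewicz} gives $\|\nabla\mu_{g_t}\|_{L^2_{g_t}}\ge C\,h(t)^\alpha$; combining this with $-h'(t)=\|\nabla\mu_{g_t}\|_{L^2_{g_t}}^2=\|\partial_t g_t\|_{L^2_{g_t}}^2$ and dividing by $h^\alpha$ yields the differential inequality
\[
-\frac{d}{dt}\,h(t)^{1-\alpha}\;\ge\;(1-\alpha)\,C\,\|\partial_t g_t\|_{L^2_{g_t}},
\]
hence $\int_{t_1}^{t_2}\|\partial_t g_t\|_{L^2_{g_t}}\,dt\le \tfrac{1}{(1-\alpha)C}\,h(t_1)^{1-\alpha}$. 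Since $\mu$ is continuous on $\R(M)$, the right-hand side is a modulus $\Psi(\delta)$ with $\Psi(\delta)\to0$ as $\delta\to0$, whenever $g_0\in\V^{k+10,\gamma}_\delta$.

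Next I would upgrade this $L^2$ path-length bound to a bound on the $C^{k,\gamma}_{g_o}$ path length. So long as $g_t$ stays in a fixed $C^0$-neighborhood of $g_o$ inside $\mathcal U$, the modified flow --- being a Ricci flow precomposed with time-dependent diffeomorphisms and a time reparametrization, with $f_{g_t}$ depending analytically on $g_t$ by Theorem \ref{Lojaciewicz} --- enjoys uniform interior parabolic estimates: $\|\partial_t g_t\|_{C^{k+10}_{g_o}}\le C_*$ for $t\ge\tau_0$ for some fixed $\tau_0$, and on $[0,\tau_0]$ one controls the flow directly from the $\delta$-smallness of $g_0$ together with short-time stability. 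An interpolation inequality of the form $\|u\|_{C^{k,\gamma}_{g_o}}\le C\|u\|_{L^2_{g_o}}^{\theta}\|u\|_{C^{k+10}_{g_o}}^{1-\theta}$, applied to $u=\partial_s g_s$ (the extra $10$ derivatives in the hypothesis providing the needed room), then converts the previous estimate into $\int_0^t\|\partial_s g_s\|_{C^{k,\gamma}_{g_o}}\,ds\le\widetilde\Psi(\delta)$ with $\widetilde\Psi(\delta)\to0$ as $\delta\to0$. With this in hand the containment statement follows by continuity: fix $\varepsilon>0$ small so that $\V^{k,\gamma}_{\varepsilon}\subset\mathcal U$ and choose $\delta\in(0,\varepsilon)$ so small that $\delta+\widetilde\Psi(\delta)<\varepsilon$ (and small enough for the short-time estimate); then the set $J$ of $t\in[0,T)$ with $g_s\in\V^{k,\gamma}_{\varepsilon}$ for all $s\le t$ is closed, contains $0$, and on $J$ one has $\|g_t-g_o\|_{C^{k,\gamma}_{g_o}}\le\|g_0-g_o\|_{C^{k,\gamma}_{g_o}}+\int_0^t\|\partial_s g_s\|_{C^{k,\gamma}_{g_o}}\,ds\le\delta+\widetilde\Psi(\delta)<\varepsilon$, which keeps $g_t$ strictly inside $\V^{k,\gamma}_{\varepsilon}$ and makes $J$ open; hence $J=[0,T)$.

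Finally, assume $T=\infty$. The finite total length $\int_0^\infty\|\partial_s g_s\|_{C^{k,\gamma}_{g_o}}\,ds<\infty$ makes $g_t$ Cauchy in $C^{k,\gamma}_{g_o}$, so $g_t\to g_\infty\in\V^{k,\gamma}_{\varepsilon}\subset\mathcal U$; the uniform higher-order bounds force $\partial_t g_t\to0$, hence $\nabla\mu_{g_\infty}=0$, i.e. $g_\infty$ is a shrinker metric, and then Theorem \ref{Lojaciewicz} at $g_\infty$ gives $0=\|\nabla\mu_{g_\infty}\|_{L^2}\ge C|\mu_{g_o}-\mu_{g_\infty}|^\alpha$, so $\mu_{g_\infty}=\mu_{g_o}$. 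For the rate, $h'=-\|\nabla\mu_{g_t}\|_{L^2}^2\le-C^2h^{2\alpha}$ with $2\alpha\in[1,2)$ integrates to $h(t)\le C' t^{-1/(2\alpha-1)}$ when $\alpha>\tfrac12$ (and to exponential decay when $\alpha=\tfrac12$); feeding this into the tail of the length bound gives $\|g_\infty-g_t\|_{L^2_{g_o}}\le\tfrac{1}{(1-\alpha)C}h(t)^{1-\alpha}\le C'' t^{-(1-\alpha)/(2\alpha-1)}$, and interpolating once more against the uniform $C^{k+10}_{g_o}$ bound yields $\|g_\infty-g_t\|_{C^{k,\gamma}_{g_o}}\le C t^{-\beta}$ with $\beta=\beta(\alpha)$.

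The hard part will be the second step: converting $L^2$-control of the flow trajectory into $C^{k,\gamma}$-control. This is where one must invoke parabolic smoothing for the modified flow uniformly in $T$, treat the short initial interval $[0,\tau_0]$ separately, and absorb the derivative loss using the $+10$ in the hypothesis; the rest is the formal \L ojasiewicz ODE bookkeeping carried out above.
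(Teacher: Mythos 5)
Your overall architecture is the right one and is essentially the scheme the paper itself uses: the paper does not prove Theorem \ref{neighborhoods} directly (it cites Sun--Wang) but proves the backward analogue, Theorem \ref{thm:backward convergence}, in the appendix by exactly this \L ojasiewicz--Simon argument, with short-time stability coming from pseudolocality plus Shi's estimates and an open--closed argument on the maximal interval of containment. Your step 1 (the $L^2$ path-length bound $\int\|\partial_s g_s\|_{L^2}\,ds\le \tfrac{1}{(1-\alpha)C}h(t_1)^{1-\alpha}$), the identification of the limit as a shrinker with $\mu_{g_\infty}=\mu_{g_o}$, and the decay $h(t)\le C t^{-1/(2\alpha-1)}$ are all correct.

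The genuine gap is in your second step --- precisely the one you flag as the hard part. You assert that the interpolation inequality $\|u\|_{C^{k,\gamma}_{g_o}}\le C\|u\|_{L^2_{g_o}}^{\theta}\|u\|_{C^{k+10}_{g_o}}^{1-\theta}$ applied to $u=\partial_s g_s$ ``converts'' the bound on $\int\|\partial_s g_s\|_{L^2}\,ds$ into a bound on $\int\|\partial_s g_s\|_{C^{k,\gamma}}\,ds$. It does not: since $\theta<1$, integrability of $\|\partial_s g_s\|_{L^2}$ over an infinite time interval gives no control of $\int\|\partial_s g_s\|_{L^2}^{\theta}\,ds$ (take $\|\partial_s g_s\|_{L^2}=s^{-3/2}$ and $\theta=1/2$). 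The correct mechanism, which is what the paper's appendix does, is to combine the pointwise bound $\|\partial_t g_t\|_{C^{k,\gamma}}\le C\|\partial_t g_t\|_{L^2}^{\beta}$ with a re-run of the \L ojasiewicz ODE carrying the exponent $\beta$: one shows $-\tfrac{d}{dt}\big(\mu_{g_o}-\mu_{g_t}\big)^{1-(2-\beta)\alpha}\ge c\,\|\partial_t g_t\|_{L^2}^{\beta}$, and integrating \emph{this} yields $\int\|\partial_t g_t\|_{C^{k,\gamma}}\,dt\le C h(0)^{1-(2-\beta)\alpha}$. This requires $1-(2-\beta)\alpha>0$, i.e. $\beta>2-1/\alpha$; since $\alpha$ can be close to $1$, the interpolation exponent must be taken arbitrarily close to $1$, which forces interpolation against $W^{N,2}$ (or $C^{N}$) norms with $N=N(k,\alpha)$ possibly far larger than $k+10$. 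Those high norms are supplied by pseudolocality plus Shi's estimates (all derivatives of curvature are bounded once $t$ is bounded away from $0$), not by the $+10$ in the hypothesis, whose only role --- as the paper remarks --- is to guarantee forward short-time stability on an initial interval $[0,\eta]$. (One can instead sidestep the exponent constraint by interpolating the \emph{differences} $g_{t_2}-g_{t_1}$, whose $L^2$ norm your step 1 controls and whose high norms are uniformly bounded; your final rate estimate for $g_\infty-g_t$ in fact does exactly this, but the containment and Cauchy claims as written rest on the unestablished $C^{k,\gamma}$ path-length bound.)
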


Note that the above theorem is slightly weaker than the original statement of \cite[Theorem 3.2, Theorem 3.3]{SW15}, since we have stated it in the way corresponding to the backward version below, which can be proved by using the same technique, i.e. the \L ojasiewicz argument. The proof of Theorem \ref{thm:backward convergence} is not essentially different from that of Theorem \ref{neighborhoods}. Nevertheless, for the convenience of the reader, we have included its proof in the appendix. Note that in Theorem \ref{neighborhoods}, it is necessary to assume $g_0\in\mathcal{V}^{k+10,\gamma}_{\delta}$ in stead of $g_0\in\mathcal V^{k,\gamma}_{\delta}$, since the forward short-time stability (i.e. $g_t\in\mathcal V^{k,\gamma}_{\varepsilon}$ for a short time) requires higher regularity of $g_0$.

\begin{Theorem}[Backward version]\label{thm:backward convergence}
Suppose that $(M^n,g_o,f_o)$ is a closed Ricci shrinker normalized as in \eqref{shrinker_normalization}.
Let $(M^n,g_t)_{t\in[0,T)}$ be a backward modified Ricci flow
\[
    \frac{\partial}{\partial t}g_t  =  2\left(\Ric_{g_t}+\nabla^2f_{g_t}-\tfrac{1}{2}g_t\right),
\]
where $T\in(0,\infty]$ and $[0,T)$ is the maximum interval of existence for $g_t$. Furthermore, assume $\mu_{g_t}\geq \mu_{g_o}$ for all $t\in[0,T)$. Then for all $\varepsilon>0$ small enough, there is $\delta\in(0,\varepsilon)$, such that if $g_0\in \V^{k,\gamma}_{\delta}\subset\mathcal U$, then
$g_t\in \V^{k,\gamma}_{\varepsilon}\subset\mathcal U$ for all $t\in[0,T-\varepsilon)$. If this ever happens, and if, in addition, $T=\infty$ (in which case we set $T-\varepsilon=\infty$ by default), then we also have that $g_t$ converges in $C^{k,\gamma}_{g_o}$ norm to a limit $g_\infty$ which is also a shrinker metric in $\mathcal U$  with $\mu_{g_o}=\mu_{g_\infty}.$ The convergence is in a polynomial rate, that is
$$\left\|g_\infty-g_t\right\|_{C^{k,\gamma}_{g_o}}\leq C t^{-\beta}\quad\text{ for all }\quad t\geq 1,$$
where $\beta$ is a constant depending only on $\alpha$ in the statement of Theorem \ref{Lojaciewicz}.
\end{Theorem}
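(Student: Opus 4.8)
The plan is to carry out the \L ojasiewicz--Simon argument of Sun--Wang, with the signs reversed to reflect that the backward modified Ricci flow \emph{decreases} $\mu$, and with extra care for its anti-parabolic character. By the formula for $\nabla\mu_g$ recorded above, the backward modified Ricci flow is precisely the negative $L^2$-gradient flow $\partial_t g_t=-\nabla\mu_{g_t}$; hence, as long as $g_t\in\mathcal U$ (where $g\mapsto f_g$, and therefore $\mu$, is analytic by Theorem \ref{Lojaciewicz}), $\mu_{g_t}$ is nonincreasing and $\tfrac{d}{dt}\mu_{g_t}=-\|\nabla\mu_{g_t}\|_{L^2_{g_t}}^2=-\|\partial_t g_t\|_{L^2_{g_t}}^2$. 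Put $\rho(t):=\mu_{g_t}-\mu_{g_o}$, which is nonnegative by hypothesis, and $h(t):=\rho(t)^{1-\alpha}$. Inserting the \L ojasiewicz inequality $\|\nabla\mu_{g_t}\|_{L^2}\ge C\rho(t)^{\alpha}$ into $\rho'(t)=-\|\nabla\mu_{g_t}\|_{L^2}^2$ gives, whenever $g_t\in\mathcal U$,
\[
h'(t)=(1-\alpha)\rho(t)^{-\alpha}\rho'(t)\le -(1-\alpha)\,C\,\|\partial_t g_t\|_{L^2_{g_t}},
\]
and integrating over any interval on which $g_s\in\mathcal U$ yields the finite-length bound $\int_{t_1}^{t_2}\|\partial_s g_s\|_{L^2_{g_s}}\,ds\le \tfrac{1}{(1-\alpha)C}\,\rho(t_1)^{1-\alpha}$. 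Since $g\mapsto\mu_g$ is continuous near $g_o$, this length tends to $0$ as $\|g_0-g_o\|_{C^{k,\gamma}_{g_o}}\to 0$; this is the only way in which the closeness of $g_0$ enters.

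For the confinement statement I would argue by continuity: if $t_0\in[0,T-\varepsilon)$ were the first time $g_t$ leaves $\V^{k,\gamma}_{\varepsilon}$, then $g_t\in\V^{k,\gamma}_{\varepsilon}\subset\mathcal U$ on $[0,t_0]$, so the finite-length bound applies and $\|g_{t_0}-g_0\|_{L^2_{g_o}}$ can be made as small as we please by shrinking $\delta$. To convert this $L^2$-smallness into the $C^{k,\gamma}_{g_o}$-smallness that would contradict an exit at $t_0$, I would interpolate it against a uniform higher-order bound $\sup_{[0,t_0]}\|g_t\|_{C^{K}_{g_o}}\le\Lambda$ ($K\gg k$), via an inequality $\|g_{t_0}-g_0\|_{C^{k,\gamma}_{g_o}}\le C\Lambda^{1-\theta}\|g_{t_0}-g_0\|_{L^2_{g_o}}^{\theta}$ for some $\theta\in(0,1)$. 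Following Sun--Wang, the bound $\Lambda$ is produced by reading the backward modified Ricci flow backwards in time, where it becomes an ordinary forward-parabolic modified Ricci flow, and applying interior parabolic smoothing: the confinement to $\V^{k,\gamma}_{\varepsilon}$ furnishes the curvature bounds these estimates require, and $\Lambda$ depends only on $\varepsilon,k,K,n$ and $g_o$, not on the particular solution. Crucially, this smoothing is available only at a definite distance from the maximal existence time $T$, which is exactly why the conclusion is stated on $[0,T-\varepsilon)$ and not on $[0,T)$; it is also the reason that, in contrast to the forward Theorem \ref{neighborhoods}, one need not assume extra derivatives on $g_0$.

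If moreover $T=\infty$, the confinement just established gives $g_t\in\V^{k,\gamma}_{\varepsilon}\subset\mathcal U$ for all $t\ge 0$, so the finite-length bound holds on $[0,\infty)$. From $\rho'\le -C^2\rho^{2\alpha}$ and $\rho\ge 0$ one gets $\rho(t)\to 0$, hence $\int_0^{\infty}\|\partial_s g_s\|_{L^2}\,ds<\infty$; so $(g_t)$ is Cauchy and converges in $L^2_{g_o}$ to some $g_\infty$, and together with the uniform $C^{K}_{g_o}$ bound (now available for every $t\ge 0$ by Shi-type estimates for the time-reversed flow) and Arzel\`a--Ascoli this upgrades to $C^{k,\gamma}_{g_o}$-convergence, whence $g_\infty\in\mathcal U$. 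Since $\int_0^{\infty}\|\nabla\mu_{g_t}\|_{L^2}^2\,dt=\rho(0)<\infty$, we have $\|\nabla\mu_{g_{t_j}}\|_{L^2}\to 0$ along some $t_j\to\infty$; passing to the limit, $\nabla\mu_{g_\infty}=0$, i.e. $\Ric_{g_\infty}+\nabla^2 f_{g_\infty}=\tfrac12 g_\infty$, so $g_\infty$ is a shrinker metric, and the \L ojasiewicz inequality applied at this critical point forces $\mu_{g_\infty}=\mu_{g_o}$. For the rate, integrating $\rho'\le -C^2\rho^{2\alpha}$ gives $\rho(t)\le C' t^{-1/(2\alpha-1)}$ when $\alpha\in(\tfrac12,1)$ and exponential decay when $\alpha=\tfrac12$ (which is stronger), so by the finite-length bound $\|g_\infty-g_t\|_{L^2_{g_o}}\le \tfrac{1}{(1-\alpha)C}\rho(t)^{1-\alpha}$ decays like a fixed power of $t$, and one more interpolation against the uniform $C^{K}_{g_o}$ bound produces $\|g_\infty-g_t\|_{C^{k,\gamma}_{g_o}}\le C t^{-\beta}$ for $t\ge 1$ with $\beta$ depending only on $\alpha$.

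The genuinely delicate step is the second paragraph. Because the backward modified Ricci flow is ill-posed in forward time there is no forward smoothing, so all the higher-regularity control---needed both to keep the flow inside the regular neighborhood and, afterwards, to upgrade $L^2$-convergence to $C^{k,\gamma}$-convergence---has to be squeezed out of the time-reversed, forward-parabolic flow, and only away from the maximal existence time; making this uniform, in particular independent of the solution, while keeping $\delta$ uniform is what requires care, and it is what forces the $T-\varepsilon$ truncation. Everything else is the by-now-routine \L ojasiewicz bookkeeping already carried out in \cite{SW15} for the forward flow.
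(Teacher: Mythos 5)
Your \L ojasiewicz bookkeeping (the finite-length bound, the differential inequality $\rho'\le -C^2\rho^{2\alpha}$, the identification of the limit as a critical point with $\mu_{g_\infty}=\mu_{g_o}$, and the polynomial rate) matches the paper's argument in substance; organizing the interpolation on the displacement $g_{t_0}-g_0$ rather than on the velocity $\partial_t g_t$, as the paper does in Lemma \ref{longtimecontinuity}, is a harmless variant. The genuine gap is in your second paragraph, in the source of the uniform higher-order bound $\Lambda$. Parabolic smoothing of the time-reversed flow runs the wrong way for what you need: Shi-type estimates give derivative bounds at a backward-flow time $t$ only from curvature bounds on an interval $[t,t+\sigma]$ of backward time, i.e., on an interval of ordinary Ricci flow time \emph{preceding} the slice you want to control. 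At the putative exit time $t_0$ of your continuity argument, the confinement hypothesis supplies curvature bounds only on $[0,t_0)$, so the reversed flow has no room on the side where Shi needs it; mere existence of the flow on $[t_0,t_0+\sigma]$ does not suffice, since Shi requires an a priori curvature bound there, and that bound is exactly what is in question. The same problem already blocks the very first step: one must show that $g_t$ stays in $\mathcal U$ for a short initial time (so that $f_{g_t}$, hence the flow and the \L ojasiewicz inequality, make sense at all), and forward smoothing of the reversed flow says nothing about times $t>0$.

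The paper closes this gap with Bamler's backward pseudolocality theorem (Theorem \ref{thm: backward pseudo}): good geometry (curvature, volume, diameter) at a \emph{single} time slice $t_1$ close to $g_o$, together with mere \emph{existence} of the backward flow on $[t_1,t_1+r^2]$ --- which is where the restriction to $[0,T-\varepsilon)$ actually enters --- yields a curvature bound on $[t_1,t_1+(\epsilon r)^2]$, after which Shi's estimates do apply and give all derivatives with constants depending only on $g_o$ and $\varepsilon$. This is the content of Lemmas \ref{shorttimestability} and \ref{longtimecontinuity} and is the one genuinely non-routine input; without it, or an equivalent a priori estimate in the ill-posed direction, neither the short-time stability nor the continuation step of your open--closed argument can be closed with a $\delta$ independent of the solution. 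Incidentally, this also corrects your side remark: the reason the backward version needs only $g_0\in\V^{k,\gamma}_{\delta}$ while the forward Theorem \ref{neighborhoods} needs $g_0\in\V^{k+10,\gamma}_{\delta}$ is that backward pseudolocality plus Shi smooth instantaneously in the direction the backward flow is running, not that the time-reversed flow smooths.
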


\subsection{A noncollapsing estimate of Bamler}

The following theorem of Bamler shows that for a Ricci flow with bounded Nash entropy, the volume of a disk around an $H_n$-center cannot become too collapsed. Let $(M^n,g_t)_{t\in I}$ be a Ricci flow, where $M$ is a closed manifold. Let $(\nu_{x_0,t_0\,|\,t})_{t<t_0}$ be the conjugate heat kernel based at $(x_0,t_0)$.
An $H_n$-center $(z,t)$ of $(x_0,t_0)$, where $t<t_0$, can be understood as the point around which $\nu_{x_0,t_0\,|\, t}$ accumulates its mass. An important fact we shall used concerning an $H_n$-center is that \emph{it always exists}; see \cite{Bam20a} for more details.

\begin{Theorem}[Theorem 6.2 in \cite{Bam20a}]\label{concollapsing}
Let $(M^n,g_t)_{t\in I}$ be a Ricci flow on a closed manifold. Let $t\in I$ and $r>0$ such that $[t-r^2,t]\subset I$. Assume that $(z,t-r^2)\in M\times I$ is an $H_n$-center of some $(x,t)\in M\times I$. If $R_{g_{t-r^2}}\geq R_{\operatorname{min}}$, then we have
$$\operatorname{Vol}_{g_{t-r^2}}\left(B_{g_{t-r^2}}\left(z,\sqrt{2H_n}r\right)\right)\geq c(R_{\operatorname{min}}r^2)\exp(\mathcal{N}_{x,t}(r^2))r^n,$$ where $c(R_{\operatorname{min}}r^2)=c(n)\exp(-2(n-2R_{\operatorname{min}}r^2)^{\frac{1}{2}})$ and $H_n:=\frac{(n-1)\pi^2}{2}+4$.
\end{Theorem}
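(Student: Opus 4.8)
The plan is to estimate the volume of the geodesic ball $B_{g_{t-r^2}}(z,\sqrt{2H_n}\,r)$ from below by integrating the conjugate heat kernel density over that ball and controlling both (i) how much mass of $\nu_{x,t\,|\,t-r^2}$ is concentrated there, and (ii) a pointwise upper bound for the density $v(\cdot,t-r^2)$ on that ball. By parabolic rescaling we may assume $r=1$ and $t=0$, so that $[-1,0]\subset I$ and $(z,-1)$ is an $H_n$-center of $(x,0)$. Write $d\nu_{x,0\,|\,-1}=v(\cdot,-1)\,dg_{-1}$ with $v(\cdot,-1)=(4\pi)^{-n/2}e^{-f(\cdot,-1)}$. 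Then
\[
1 \;=\; \int_M v(\cdot,-1)\,dg_{-1}
\;\geq\; \int_{B_{g_{-1}}(z,\sqrt{2H_n})} v(\cdot,-1)\,dg_{-1},
\]
and conversely, since by definition of an $H_n$-center the variance $\operatorname{Var}(\delta_{(x,0)},\nu_{x,0\,|\,-1})\le H_n$, Chebyshev's inequality gives
\[
\nu_{x,0\,|\,-1}\!\left(B_{g_{-1}}(z,\sqrt{2H_n})\right)\;\ge\;\tfrac12 .
\]
So at least half of the mass sits inside the ball, and it remains to bound $v(\cdot,-1)$ from above on that ball so as to convert a mass bound into a volume bound: $\operatorname{Vol}_{g_{-1}}(B)\ge \tfrac12 \big(\sup_{B} v(\cdot,-1)\big)^{-1}$.

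The key estimate is therefore a Gaussian-type upper bound on the conjugate heat kernel at its $H_n$-center. I would invoke the heat-kernel bound of the form $v(y,-1)\le (4\pi)^{-n/2}\exp\big(-f(y,-1)\big)$ together with a lower bound for $f$ on $B_{g_{-1}}(z,\sqrt{2H_n})$ coming from two ingredients: first, the identification $\mathcal N_{x,0}(1)=\int_M f(\cdot,-1)\,d\nu_{x,0\,|\,-1}-\tfrac n2$, which controls the \emph{average} of $f$ on the ball (using that most of the mass is there, the average of $f$ over $B$ is not much larger than $\mathcal N_{x,0}(1)+\tfrac n2$ up to an error controlled by the tail, which in turn is controlled by the variance bound); and second, Perelman-type differential/gradient estimates for $f$ (of the kind $|\nabla f|^2 \le$ something involving $f$ and the scalar curvature lower bound $R_{\min}$) which upgrade the average control to a pointwise upper bound for $f$ — hence a pointwise lower bound for $v$ — on a definite ball around the point where $f$ is close to its average. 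The scalar curvature lower bound $R_{g_{-1}}\ge R_{\min}$ enters precisely here, through the Perelman-type estimate $\tau(2\Delta f-|\nabla f|^2+R)+f-n\le 0$ specialized at $\tau=1$ and the resulting bound $|\nabla\sqrt{f+c}|\le C$ away from the minimum, which is where the factor $c(R_{\min})=c(n)\exp(-2(n-2R_{\min})^{1/2})$ originates.

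Putting these together: one shows $v(y,-1)\ge c(n)\exp(-2(n-2R_{\min})^{1/2})$ on a ball of radius comparable to $1$ around a suitable point, and then the inequality $\operatorname{Vol}(B)\ge \tfrac12(\sup v)^{-1}$ (applied on the ball $B_{g_{-1}}(z,\sqrt{2H_n})$, after noting such a ball contains the smaller ball on which the pointwise bound holds) yields $\operatorname{Vol}_{g_{-1}}(B_{g_{-1}}(z,\sqrt{2H_n}))\ge c(n)\exp(-2(n-2R_{\min})^{1/2})\exp(\mathcal N_{x,0}(1))$, which is the claim after undoing the rescaling (the $r^n$ and $R_{\min}r^2$ appearing by scaling invariance of the Nash entropy and the scaling weight $R_{\min}r^2$ of the scalar curvature term). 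The main obstacle I expect is step (ii): turning the integral/average control of $f$ given by the Nash entropy into a clean \emph{pointwise} upper bound on $f$ on a definite-size ball, which requires carefully combining the variance bound at the $H_n$-center with Perelman's gradient estimate for the conjugate heat kernel potential and keeping explicit track of how $R_{\min}$ enters the constant; the mass-concentration half of the argument (step (i)) is comparatively soft, being just Chebyshev applied to the defining property of an $H_n$-center.
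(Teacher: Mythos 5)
The paper does not prove this theorem; it is stated as a direct citation of [Bam20a, Theorem~6.2], so there is no internal proof to compare your proposal against. Evaluating your outline on its own terms: the first, soft step is correct --- by definition of an $H_n$-center, $\int_M \dist_{t-r^2}^2(z,\cdot)\,d\nu_{x,t\,|\,t-r^2} \le H_n r^2$, and Markov's inequality gives $\nu_{x,t\,|\,t-r^2}\bigl(B_{g_{t-r^2}}(z,\sqrt{2H_n}\,r)\bigr) \ge 1/2$ (you should write the variance relative to $\delta_z$ at time $t-r^2$, not $\delta_{(x,0)}$). The trouble is in the second step, where the sign is reversed. From $\nu(B)\ge 1/2$ together with $\nu(B)\le \Vol(B)\cdot\sup_B v$, the lower volume bound requires an \emph{upper} bound on $v$ on $B$, equivalently a \emph{lower} bound on $f$ there. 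You aim instead for ``a pointwise upper bound for $f$ --- hence a pointwise lower bound for $v$,'' and then assert $v(y,-1)\ge c(n)\exp(-2(n-2R_{\min})^{1/2})$ on a sub-ball. But a lower bound on $v$ only makes $\sup_B v$ larger, which weakens $\Vol(B)\ge \tfrac12(\sup_B v)^{-1}$; the displayed conclusion does not follow from it. What is actually needed is $\sup_B v \le C(n)\exp(2(n-2R_{\min})^{1/2})\exp(-\N_{x,0}(1))$, i.e.\ $f\ge \N_{x,0}(1)+C'(n,R_{\min})$ on $B$.

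Beyond this sign confusion, the hard step you flag --- upgrading $\int_M f\,d\nu = \N_{x,0}(1)+n/2$ and the mass concentration into a pointwise \emph{lower} bound on $f$ on a definite ball around the $H_n$-center --- is not actually carried out. Perelman's Harnack $\tau(2\Delta f-|\nabla f|^2+R)+f-n\le 0$ is indeed the relevant tool, but applied at the minimum of $f$ it yields $f_{\min}\le n-R_{\min}\tau$, which is an \emph{upper} bound on $f_{\min}$ and hence again points in the wrong direction; producing the required lower bound on $f$ near the $H_n$-center, with the stated dependence $c(n)\exp(-2(n-2R_{\min}r^2)^{1/2})$, is precisely the substance of Bamler's argument, which your sketch does not reproduce. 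As written, the proposal supplies only the Chebyshev half of the proof.
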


\subsection{Bamler's Tangent flows}

\def \IR {\mathbb{R}}
\def \YY {\mathcal{Y}}
\def \TT {\mathcal{T}}

When introducing the notion of tangent flow, we implement the same notations as in \cite{Bam20b}, to which the reader is encouraged to refer for more details. For a metric flow $\XX$ over $I\subset \IR,$ we denote by
$\XX^{-t_0,\lambda}$ the metric flow obtained by first applying a $-t_0$ time shift to $\XX$ and then a parabolic rescaling by factor $\lambda$ (as the notation in (\ref{the_F_convergence})). Let $\XX$ be a metric flow over
$I$ and $|(-\infty,0]\setminus I|=0.$
For any $x_0\in \XX_{t_0},$
we call a metric flow pair (that is, a metric flow coupled with a conjugate heat flow; see \cite[Section 5]{Bam20b}) 
$(\XX^\infty, (\nu^\infty_{x_{\max}\,|\,t})_{t\in I'^{,\infty}})$ a \textbf{tangent flow at infinity based at}  $x_0$ if
there is a sequence $\lambda_j\searrow 0$, such that, 
\[
\left(\XX^{-t_0,\lambda_j}_{[-T,0]},
\left(\nu_{x_0\,|\,t}^{-t_0,\lambda_j}\right)_{t\in \lambda_j^2(I-t_0)\cap [-T,0]}\right)
\xrightarrow{\makebox[1cm]{$\mathbb{F}$}}
\left(\XX^\infty_{[-T,0]}, \left(\nu^\infty_{x_{\max}\,|\,t}\right)_{t\in  I'^{,\infty}\cap[-T,0]}\right).
\]
for any $T<\infty$. We can then define
\[
    \TT_{x_0}^\infty
    :=\{\text{ tangent flows at infinity based at }x_0\},
\]
which is nonempty by Bamler's compactness theory in \cite[Section 7]{Bam20b}. Now we may restate Theorem \ref{thm: rough tangent} more precisely as follows. Note that in the statement of Theorem \ref{thm: tangent flows}, the base points $x_0$ and $y_0$ need not lie in the same time-slice.
\begin{Theorem} \label{thm: tangent flows}
Suppose that $\XX$ is an $H$-concentrated (\cite[Definition 3.30]{Bam20b}) 
metric flow over $(-\infty,0]$.
Then for any $x_0$, $y_0\in \XX,$ we have, 
up to isometry, 
\[
    \TT_{x_0}^\infty=\TT_{y_0}^\infty.
\]
\end{Theorem}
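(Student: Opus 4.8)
The plan is to show that the tangent flow at infinity does not depend on the base point by first reducing to the case where $x_0$ and $y_0$ lie in the same time-slice, and then using the $H$-concentration hypothesis to control how far apart the rescaled conjugate heat flows based at $x_0$ and $y_0$ can be in the $\mathbb{F}$-distance after rescaling by a factor $\lambda_j\searrow 0$.

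\medskip

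\textbf{Step 1: Reduction to a common time-slice.} If $x_0\in\XX_{t_0}$ and $y_0\in\XX_{s_0}$ with $t_0\neq s_0$, I would first replace $y_0$ by a point $y_0'\in\XX_{t_0}$ obtained by flowing the conjugate heat flow from $y_0$ down to time $t_0$ (or, if $t_0<s_0$, by noting that the conjugate heat kernel measure $\nu_{y_0\,|\,t_0}$ is a conjugate heat flow and using a point in the support of its $H_n$-center). The key point is that after parabolic rescaling by $\lambda_j\searrow 0$, a fixed time-shift of size $|t_0-s_0|$ becomes negligible: the rescaled time shift is $\lambda_j^2|t_0-s_0|\to 0$. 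One needs to check that this does not affect the $\mathbb{F}$-limit; this should follow from the continuity of the conjugate heat flow and the stability of $\mathbb{F}$-convergence under small time shifts, together with the fact (from Bamler's theory) that the limit is a metric soliton over $(-\infty,0)$ and is therefore insensitive to such a vanishing perturbation. So it suffices to treat $x_0,y_0\in\XX_{t_0}$, and after a further time-shift we may assume $t_0=0$.

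\medskip

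\textbf{Step 2: Controlling the $\mathbb{F}$-distance between the two rescaled flows.} With $x_0,y_0\in\XX_0$, consider the conjugate heat flows $(\nu_{x_0\,|\,t})_{t<0}$ and $(\nu_{y_0\,|\,t})_{t<0}$. The heart of the argument is to estimate, for fixed $A<\infty$ and $I_A=[-A,-1/A]$,
\[
    \dist_{\mathbb{F}}^{I_A}\!\left(\left(\XX^{0,\lambda}_{I_A},(\nu_{x_0\,|\,t}^{0,\lambda})_{t\in I_A}\right),\left(\XX^{0,\lambda}_{I_A},(\nu_{y_0\,|\,t}^{0,\lambda})_{t\in I_A}\right)\right)
\]
and show it tends to $0$ as $\lambda\searrow 0$. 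Since the two metric flows here are literally the same (only the conjugate heat flows differ), this $\mathbb{F}$-distance is governed by a Wasserstein-type distance between the measures $\nu_{x_0\,|\,t}^{0,\lambda}$ and $\nu_{y_0\,|\,t}^{0,\lambda}$ at the relevant times. This is exactly where $H$-concentration enters: $H$-concentration gives a quantitative bound of the form $\mathrm{Var}(\nu_{x_0\,|\,t},\nu_{y_0\,|\,t})\leq \mathrm{Var}_{t_0}(\delta_{x_0},\delta_{y_0}) + H(t_0-t)$ (or a variant thereof), so the $W_1$-distance between the two measures at time $t$ is at most $\sqrt{\mathrm{Var}}\lesssim \sqrt{H|t|+C}$ where $C=C(x_0,y_0)$ is a fixed constant (essentially $\mathrm{dist}_0(x_0,y_0)^2$). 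After rescaling by $\lambda$, the time interval $I_A$ corresponds to the interval $[-A\lambda^{-2},-\lambda^{-2}/A]$ in the original flow, and the metric is scaled by $\lambda$; hence the rescaled $W_1$-distance at a rescaled time $t\in I_A$ is at most $\lambda\sqrt{H|t|\lambda^{-2}+C}=\sqrt{H|t|+C\lambda^2}\to\sqrt{H|t|}$. This does NOT go to zero — but it is uniformly bounded, and more importantly the \emph{excess} over what the soliton structure forces is $\sqrt{H|t|+C\lambda^2}-\sqrt{H|t|}\to 0$. The correct way to package this: the limit $\XX^\infty$ is a metric soliton, so on it there is a canonical conjugate heat flow, and both limiting conjugate heat flows (the $\mathbb{F}$-limit of the $x_0$-flows and that of the $y_0$-flows) must, by the $H$-concentration estimate passing to the limit, have variance from each other equal to $\lim (C\lambda^2 + o(1)) = 0$ at every time; hence they coincide. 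This forces the two tangent flow pairs to be isometric.

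\medskip

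\textbf{Step 3: Passing the $\mathbb{F}$-limit through.} To make Step 2 rigorous, I would fix sequences realizing the two tangent flows, pass to a common subsequence $\lambda_j$ along which both $\left(\XX^{0,\lambda_j}_{I_A},(\nu_{x_0\,|\,t}^{0,\lambda_j})\right)$ and $\left(\XX^{0,\lambda_j}_{I_A},(\nu_{y_0\,|\,t}^{0,\lambda_j})\right)$ $\mathbb{F}$-converge (possible since $\TT^\infty_{x_0}$ and $\TT^\infty_{y_0}$ are each nonempty and the $\mathbb{F}$-topology is, on the relevant bounded sets, sequentially compact), obtaining limits $\XX^\infty_x$ and $\XX^\infty_y$. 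Using a correspondence realizing both limits simultaneously (here one uses that the underlying metric flows in the two sequences are identical, so the correspondences can be taken compatibly), the $H$-concentration bound on $\mathrm{Var}(\nu_{x_0\,|\,t}^{0,\lambda_j},\nu_{y_0\,|\,t}^{0,\lambda_j})\leq C\lambda_j^2 + H|t|$ — wait, I should be more careful: the variance term scales correctly so that in the limit the two conjugate heat flows are coupled with variance $\leq H|t|$ at time $t$, but since both are conjugate heat flows on the \emph{same} limiting metric soliton and a metric soliton's conjugate heat flow with a prescribed "center at $t\to 0^-$" is unique, they must agree, i.e. the diagonal coupling already shows the variance is $0$. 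Then $\dist_{\mathbb{F}}^{I_A}$ between the two limits is $0$ for every $A$, so by exhausting $A\nearrow\infty$ we get isometry of the full tangent flow pairs, i.e. $\TT^\infty_{x_0}=\TT^\infty_{y_0}$ up to isometry.

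\medskip

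\textbf{Main obstacle.} The delicate part is Step 3: making precise the claim that the $H$-concentration inequality — which is a statement about pairs of conjugate heat flows on a \emph{fixed} smooth or singular metric flow — survives the $\mathbb{F}$-limit in the form needed, and that the resulting coupling on the limit forces the two limiting conjugate heat flows to coincide. This requires carefully tracking the correspondences in the definition of $\mathbb{F}$-convergence, verifying that $H$-concentration is preserved in $\mathbb{F}$-limits (which Bamler establishes, but one must cite/apply it correctly), and arguing that on a metric soliton a conjugate heat flow is determined by its asymptotic behavior as $t\to 0^-$ together with the fact that after rescaling both $x_0$ and $y_0$ "converge to the same point" — the singular point of the soliton at $t=0$ — at rate $O(\lambda_j)\to 0$. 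Handling the base points in different time-slices (Step 1) is a secondary technical nuisance but conceptually routine once the same-time-slice case is done.
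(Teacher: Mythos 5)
Your overall architecture---rescale, show the two rescaled conjugate heat flows become $\mathbb{F}$-close, conclude the tangent flows agree---matches the paper's, but Step 2 contains a genuine gap that you have in fact half-noticed yourself. The $H$-concentration bound $\mathrm{Var}(\nu_{x_0\,|\,t},\nu_{y_0\,|\,t})\le \dist_0(x_0,y_0)^2+H|t|$ only yields $\dist_{W_1}(\nu_{x_0\,|\,t},\nu_{y_0\,|\,t})\le\sqrt{C+H|t|}$, which after rescaling tends to $\sqrt{H|t|}$, not to $0$. This bound cannot distinguish the case $\nu_{x_0\,|\,t}=\nu_{y_0\,|\,t}$ from the case where the two measures are genuinely far apart, because $\mathrm{Var}(\mu,\mu)=\mathrm{Var}(\mu)$ is itself of order $H|t|$ for a single conjugate heat kernel; the ``excess'' $\sqrt{H|t|+C\lambda^2}-\sqrt{H|t|}$ is an excess of \emph{variance}, not a bound on any distance between the two measures, so it does not force the two limiting conjugate heat flows to coincide. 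The subsequent appeal to ``uniqueness of a conjugate heat flow on a metric soliton with prescribed center at $t\to 0^-$'' is not a fact you may invoke: it is essentially equivalent to what you are trying to prove, and you have not shown that the two limits have the same ``center.''

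The missing ingredient is the monotonicity of the $W_1$-Wasserstein distance between two conjugate heat kernels: for $s\le s'$ one has $\dist_{W_1}^{\XX_s}(\nu_{x_0\,|\,s},\nu_{y_0\,|\,s})\le \dist_{W_1}^{\XX_{s'}}(\nu_{x_0\,|\,s'},\nu_{y_0\,|\,s'})$ (\cite[Proposition 3.24(b)]{Bam20b}). Choosing $\rho$ so that $y_0\in\PP^{*}(x_0;\rho)$ (such a $\rho$ always exists, by an $H$-center argument), this gives the \emph{time-independent} bound $\dist_{W_1}^{\XX_t}(\nu_{x_0\,|\,t},\nu_{y_0\,|\,t})\le\rho$ for all $t\le t_0-\rho^2$, which rescales to $\lambda_j\rho\to 0$; combined with \cite[Lemma 5.19]{Bam20b} this yields the required $\mathbb{F}$-closeness without ever passing a coupling through the limit. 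This is exactly what Proposition \ref{prop: local continuity} does. A secondary point: you dismiss the case of base points in different time-slices as routine, but in the paper this is where most of the technical work lies---the quantitative continuity of time-shifting under the $\mathbb{F}$-distance for $H$-concentrated flows (Proposition \ref{prop: translation continuity}) is a long argument through \cite[Propositions 4.1 and 4.14]{Bam20b}---and your proposed reduction to a common time-slice (replacing $y_0$ by a point ``in the support of its $H_n$-center'') is not actually carried out, nor is it clear it can be made to work more easily than the time-shift estimate it is meant to replace.
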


Similarly, recall that $(\XX^\infty, (\nu^\infty_{x_{\max}\,|\,t})_{t\in I'^{,\infty}})$ is said to be a \textbf{tangent flow at}  $x_0$ if
there is a sequence $\lambda_j\to \infty$, such that
\[
\left(\XX^{-t_0,\lambda_j}_{[-T,0]},
\left(\nu_{x_0\,|\,t}^{-t_0,\lambda_j}\right)_{t\in \lambda_j^2(I-t_0)\cap [-T,0]}\right)
\xrightarrow{\makebox[1cm]{$\mathbb{F}$}}
\left(\XX^\infty_{[-T,0]}, \left(\nu^\infty_{x_{\max}\,|\,t}\right)_{t\in  I'^{,\infty}\cap[-T,0]}\right)
\]
for any $T<\infty.$ Note that if $\XX$ is generated by a Ricci flow forming a finite-time singularity, then $x_0$ is allowed to be in the ``singular time-slice'', and if this is the case, then $\nu_{x_0\,|\,t}$ is understood to be a singular conjugate heat kernel as described in Section 2.1.

\subsection{Forward and backward pseudolocality}

Perelman first proved the forward pseudolocality theorem \cite[Section 10]{Per02}, showing that a relatively regular region cannot become singular under Ricci flow too quickly. We shall use the following version proved by Lu \cite{Lu10}.

\begin{Theorem}[\cite{Per02,Lu10}]
\label{thm: forward pseudo}
Fix any $n\ge 2$ and $v>0,$ there is $\epsilon_0=\epsilon_0(n,v)>0$ with the following property.
For any $r>0$ and $\epsilon\in (0,\epsilon_0],$ suppose that $(M^n,g_t)_{t\in [0,(\epsilon r)^2]}$ is a complete Ricci flow with bounded curvature. 
Suppose that for some  $x_0\in M,$ we have
\[
    \sup_{B_{g_0}(x_0,r)\times\{0\}} |{\Rm}|\le r^{-2},\quad
    {\rm Vol}_{g_0}\left(B_{g_0}(x_0,r)\right) \ge v r^n.
\]
Then
\[
    |{\Rm}|(x,t)
    \le (\epsilon_0 r)^{-2},
\]
for any $t\in [0, (\epsilon_0 r)^2]$ and $x\in B_{g_t}(x_0,\epsilon_0 r).$ 
\end{Theorem}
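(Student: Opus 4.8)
The plan is to reduce this to the well-known pseudolocality theorem of Perelman and its refined formulations. I would first recall Perelman's original pseudolocality theorem in its scale-invariant form: there exist $\epsilon, \delta > 0$ depending only on $n$ such that if $(M^n, g_t)$ is a complete Ricci flow with bounded curvature, and at $t=0$ one has an almost-Euclidean isoperimetric inequality on $B_{g_0}(x_0, r)$ together with $R_{g_0} \ge -r^{-2}$ on that ball, then $|\Rm|(x,t) \le (\alpha t)^{-1}$ for all $x \in B_{g_t}(x_0, \epsilon r)$ and $t \in (0, (\epsilon r)^2]$. Lu's version \cite{Lu10} replaces the isoperimetric hypothesis by the more geometric pair of assumptions actually appearing in the statement, namely $\sup_{B_{g_0}(x_0,r)} |\Rm| \le r^{-2}$ and $\Vol_{g_0}(B_{g_0}(x_0,r)) \ge v r^n$, at the cost of letting the output constant $\epsilon_0$ depend on the lower volume bound $v$ as well as on $n$.

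The key steps, in order, would be: (i) by parabolic rescaling, normalize to $r = 1$, so that the hypotheses become $\sup_{B_{g_0}(x_0,1)} |\Rm| \le 1$ and $\Vol_{g_0}(B_{g_0}(x_0,1)) \ge v$, and we must produce $\epsilon_0(n,v)$ with $|\Rm|(x,t) \le \epsilon_0^{-2}$ for $t \in [0, \epsilon_0^2]$ and $x \in B_{g_t}(x_0, \epsilon_0)$; (ii) from the curvature bound $|\Rm| \le 1$ on $B_{g_0}(x_0,1)$, a volume-comparison/Bishop-type argument combined with the lower volume bound $\Vol(B_{g_0}(x_0,1)) \ge v$ yields a uniform lower bound on $\Vol(B_{g_0}(x_0, s))/s^n$ for all $s \in (0,1)$ — this is where Lu's argument upgrades a single-scale volume bound to the scale-invariant isoperimetric-type input Perelman needs; (iii) feed this into Perelman's pseudolocality estimate to get the curvature bound $|\Rm|(x,t) \le (\alpha t)^{-1}$ on a slightly smaller parabolic neighborhood; (iv) absorb the $(\alpha t)^{-1}$ bound and the $t \to 0^+$ endpoint using the initial bound $|\Rm| \le 1$ and a continuity/maximum-principle argument at small times, and shrink $\epsilon$ once more to convert $(\alpha t)^{-1}$ on $(0,\epsilon^2]$ into the uniform bound $(\epsilon_0 r)^{-2}$ on all of $[0,(\epsilon_0 r)^2]$; finally set $\epsilon_0$ to be the minimum of the constants produced along the way.

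I expect the main obstacle to be step (ii): passing from the bounded-curvature and single-scale volume hypotheses to the scale-invariant near-Euclidean volume (equivalently isoperimetric) condition at all smaller scales, uniformly, with constants depending only on $n$ and $v$. This requires a careful Bishop–Gromov comparison on the ball where curvature is controlled, plus an argument that the isoperimetric constant degrades only by a controlled amount — precisely the technical heart of \cite{Lu10}. Since the paper explicitly cites \cite{Lu10} for this statement, in the write-up I would simply invoke that reference for step (ii)–(iii) and only spell out the rescaling normalization in (i) and the short-time absorption in (iv), which are routine. The other minor point to be careful about is that $R_{g_0} \ge -r^{-2}$ (needed for Perelman's version) follows immediately from $|\Rm| \le r^{-2}$, so no separate hypothesis is required.
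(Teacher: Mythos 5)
The paper does not prove this theorem: it is a preliminary quoted from Perelman and Lu with no argument supplied, so your plan to normalize $r=1$ and cite \cite{Lu10} for the substance matches the paper's (implicit) stance exactly. One remark on your speculative sketch of Lu's argument, though: a single-scale volume bound $\Vol_{g_0}(B(x_0,1))\ge v$ together with $|\Rm|\le 1$ does \emph{not} yield a near-Euclidean isoperimetric inequality at scale $1$ when $v$ is small, so Perelman's isoperimetric pseudolocality cannot be fed your step (ii) output directly as in your step (iii). What one actually extracts from those hypotheses is an injectivity radius lower bound (Cheeger--Gromov--Taylor), after which the isoperimetric constant is near-Euclidean on sufficiently small balls and Perelman's theorem applies there; alternatively, Lu's proof in \cite{Lu10} runs a self-contained point-picking and blowup argument rather than reducing to the isoperimetric version. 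So the phrase ``the isoperimetric constant degrades only by a controlled amount'' is not quite right --- it becomes near-Euclidean only after passing to a smaller, controlled scale, not at scale $1$ itself. Since you explicitly defer to the reference for precisely this part, none of this affects the correctness of what you would actually write up.
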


We will also use a backward pseudolocality theorem recently proved by Bamler \cite[Theorem 2.47]{Bam20c}. Although the following theorem bears a similar name as the above theorem, yet their proofs are substantially different.

\begin{Theorem}[\cite{Bam20c}]
\label{thm: backward pseudo}
For any $n\ge 2$ and $\alpha>0,$ there is an $\epsilon(n,\alpha)>0,$ such that the following holds.
Let $(M^n,g_t)_{t\in [-r^2,0]}$ be a compact Ricci flow for some $r>0$. Suppose that for some point $x_0\in M$, we have
\[
    \sup_{B_{g_0}(x_0,r)\times\{0\}} |{\Rm}|
    \le (\alpha r)^{-2},\quad
    {\rm Vol}_{g_0}\left(
    B_{g_0}(x_0,r)
    \right)\ge \alpha r^n.
\]
Then
\[
    |{\Rm}|\le (\epsilon r)^{-2},
\]
on $B_{g_0}(x_0,(1-\alpha)r)\times [-(\epsilon r)^2,0].$

\end{Theorem}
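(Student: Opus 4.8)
The plan is to argue by contradiction and blow-up, reducing the statement to the rigidity fact that a complete bounded-curvature Ricci flow which is flat on one time-slice must be the static flat $\mathbb{R}^n$ (backward uniqueness, Kotschwar). Rescaling parabolically we may take $r=1$. Suppose the theorem fails for some $n,\alpha$: there are compact Ricci flows $(M_i^n,g_{i,t})_{t\in[-1,0]}$ and points $x_{0,i}\in M_i$ with $\sup_{B_{g_{i,0}}(x_{0,i},1)}|\Rm|\le\alpha^{-2}$ and $\Vol_{g_{i,0}}(B_{g_{i,0}}(x_{0,i},1))\ge\alpha$, together with $(y_i,s_i)$ with $y_i\in B_{g_{i,0}}(x_{0,i},1-\alpha)$, $s_i\in[-\epsilon_i^2,0]$ and $|\Rm|(y_i,s_i)>\epsilon_i^{-2}$, where $\epsilon_i\searrow0$. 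First I would run a Perelman-type point-selection inside the slab $B_{g_{i,0}}(x_{0,i},1-\alpha/2)\times[-\epsilon_i^2,0]$ to obtain a new point $(\bar y_i,\bar s_i)$ with $Q_i:=|\Rm|(\bar y_i,\bar s_i)\to\infty$, with $|\Rm|\le4Q_i$ on a backward parabolic neighborhood $P(\bar y_i,\bar s_i;A_iQ_i^{-1/2})$, $A_i\to\infty$, and with $\bar s_i$ chosen to be essentially the last time $\le0$ at which the curvature in the relevant ball exceeds $\epsilon_i^{-2}$, so that in addition $|\Rm|\le\epsilon_i^{-2}\le Q_i$ on the forward sliver between $\bar s_i$ and $0$.

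Next I would extract a blow-up limit. Set $\tilde g_{i,t}:=Q_ig_{i,\bar s_i+Q_i^{-1}t}$, a Ricci flow on a time-interval $[-a_i,b_i]$ with $a_i\to\infty$ and $b_i=Q_i|\bar s_i|\ge0$, normalized so that $|\Rm|_{\tilde g_i}(\bar y_i,0)=1$ and $|\Rm|_{\tilde g_i}\le4$ on a parabolic neighborhood of $(\bar y_i,0)$ exhausting the past. The point is to pass to a smooth limit; the crucial ingredient is uniform noncollapsing of the $\tilde g_i$ on this region, which I would obtain from Bamler's noncollapsing estimate (Theorem~\ref{concollapsing}) applied at $(\bar y_i,\bar s_i)$, fed by a lower bound on the pointed Nash entropy $\mathcal N_{\bar y_i,\bar s_i}$ at scales $\lesssim A_i^2Q_i^{-1}$; such a lower bound comes from the smoothness of $g_{i,t}$ at small scales together with the almost-monotonicity/almost-rigidity properties of the Nash entropy on the region where the curvature is already bounded. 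With this, Hamilton's compactness theorem (equivalently, Bamler's $\mathbb{F}$-compactness and \cite[Theorem~2.5]{Bam20c}) produces a smooth limit Ricci flow $(N^n,\tilde g_{\infty,t})$ over $(-\infty,\bar t_\infty]$ (with $\bar t_\infty=\lim b_i\in[0,\infty]$), noncollapsed, with $|\Rm|\le4$ and $|\Rm|(\bar y_\infty,0)=1$.

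Finally I would identify the limit as flat and conclude. Since $\bar y_i\in B_{g_{i,0}}(x_{0,i},1-\alpha/2)$, the ball $B_{g_{i,0}}(\bar y_i,\alpha/4)$ has $|\Rm|\le\alpha^{-2}$; after rescaling, $|\Rm|_{\tilde g_i}\le\alpha^{-2}Q_i^{-1}\to0$ on $B(\bar y_i,\tfrac{\alpha}{4}\sqrt{Q_i})$ at the time corresponding to the original $t=0$, and by Bishop--Gromov that slice is also noncollapsed. Hence the top-time slice of the limit flow (the time-$\bar t_\infty$ slice if $\bar t_\infty<\infty$; otherwise the Cheeger--Gromov limit of the slices as $t\to\infty$, combined with forward pseudolocality Theorem~\ref{thm: forward pseudo} to propagate it) is a complete, flat, noncollapsed Riemannian manifold, i.e.\ isometric to $\mathbb{R}^n$. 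Kotschwar's backward uniqueness theorem then forces $(N,\tilde g_{\infty,t})$ to be the static flat $\mathbb{R}^n$, contradicting $|\Rm|(\bar y_\infty,0)=1$; this proves the theorem.

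The step I expect to be the main obstacle is securing uniform noncollapsing on the blow-up region. Because the Ricci flows are only given on $[-1,0]$ with no control whatsoever at $t=-1$, Perelman's no-local-collapsing theorem does not apply directly, and one is genuinely forced into Bamler's Nash-entropy machinery --- Theorem~\ref{concollapsing} together with his $\epsilon$-regularity theorem from \cite{Bam20c} --- where the almost-monotonicity estimates must be invoked carefully so as not to become circular with the very curvature bound being proved. An alternative packaging I would keep in reserve avoids point-picking: run a continuity argument in time whose unknown is the first time at which a scale-invariant regularity threshold is violated, and at that time apply Bamler's $\epsilon$-regularity theorem, the needed lower bound on $\mathcal N_{x,0}$ being supplied by local Gaussian estimates on the conjugate heat kernel over the already-controlled sub-interval; the analytic heart --- propagating almost-Euclideanness backward through the Nash entropy --- is the same either way.
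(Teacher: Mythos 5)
First, a point of order: the paper does not prove this statement at all --- it is quoted verbatim from Bamler \cite[Theorem 2.47]{Bam20c} and used as a black box, and the authors explicitly remark that its proof is ``substantially different'' from that of forward pseudolocality. So the comparison here is with Bamler's proof, which runs through the Nash-entropy, heat-kernel and $\epsilon$-regularity machinery of \cite{Bam20a}--\cite{Bam20c} rather than through a classical point-picking blow-up.

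Your scheme has a genuine gap exactly where you flag it, and the repair you propose is circular. To apply Theorem \ref{concollapsing} at the blow-up points $(\bar y_i,\bar s_i)$ you need a lower bound on $\mathcal{N}_{\bar y_i,\bar s_i}(\tau)$ for $\tau\sim Q_i^{-1}$; but this Nash entropy is computed from the conjugate heat kernel based at $(\bar y_i,\bar s_i)$ evaluated at times $\le \bar s_i-\tau<\bar s_i\le 0$, i.e.\ strictly in the past of the blow-up point, a region about which the hypotheses (which constrain only the single time-slice $t=0$) say nothing. The ``smoothness of $g_{i,t}$ at small scales'' you want to feed into the almost-monotonicity of $\mathcal{N}$ is precisely the backward curvature control being proven, and a two-sided parabolic curvature bound obtained from point-picking does not by itself supply the volume lower bound needed to start the entropy estimate. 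Two further steps fail as written: (i) in the case $b_i=Q_i|\bar s_i|\to\infty$ you propose to use forward pseudolocality to carry the flatness of the rescaled time-$0$ slice back to the blow-up time, but forward pseudolocality propagates regularity \emph{forward} in time, so this invocation is exactly the statement under proof; (ii) your two point-picking requirements --- taking $\bar s_i$ to be essentially the last time the threshold is exceeded (so that $b_i$ stays bounded and the flat slice survives in the limit) while simultaneously securing $|{\Rm}|\le 4Q_i$ on backward parabolic balls of radius $A_iQ_i^{-1/2}$ with $A_i\to\infty$ --- pull in opposite directions, since the second forces a re-selection of the point and of $Q_i$ that destroys the first. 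The closing step (flat top slice plus Kotschwar's backward uniqueness forces a static flat limit, contradicting the normalization) is sound once a smooth, noncollapsed limit with a flat final slice at finite rescaled time is in hand, but the route to that limit is not. The missing idea is the one Bamler actually supplies: a mechanism for transporting the time-$0$ regularity into a Nash-entropy lower bound based at \emph{interior} space-time points (via $W_1$-monotonicity of conjugate heat kernels and the comparison of $\mathcal{N}$ at nearby base points and times), after which his $\epsilon$-regularity theorem, rather than a blow-up, yields the backward curvature bound.
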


\section{Type I bounds of the scaling factors}

In this section, we shall show that under the conditions of Theorem \ref{Thm_backwardrigidity} or Theorem \ref{Thm_forwardrigidity}, the scaling factors must have the following Type I bounds.

\begin{Proposition}\label{typeI}
Under the assumptions of either Theorem \ref{Thm_backwardrigidity} or Theorem \ref{Thm_forwardrigidity}, we have
\begin{equation}\label{eq:typeI}
    c\leq |t_i|Q_i\leq C\quad\text{ for all }\quad i\in\mathbb{N},
\end{equation}
where $0<c<C<\infty$ are constants independent of $i\in\mathbb{N}$.
\end{Proposition}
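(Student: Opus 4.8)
The plan is to establish the two-sided bound $c \le |t_i|Q_i \le C$ by contradiction, treating the upper and lower bounds separately and drawing on the pseudolocality theorems (Theorems \ref{thm: forward pseudo} and \ref{thm: backward pseudo}), Bamler's noncollapsing estimate (Theorem \ref{concollapsing}), and the monotonicity of Perelman's entropy. Throughout, let $h_i := Q_i g_{t_i}$, so that $(M, h_i) \to (M, g_o)$ in $C^2$ Cheeger--Gromov; in particular the curvatures and volumes of $(M,h_i)$ are uniformly controlled, and there are uniform constants $r_0 > 0$, $v_0 > 0$ with $\sup_{B_{h_i}(x,r_0)}|\Rm_{h_i}| \le r_0^{-2}$ and $\Vol_{h_i}(B_{h_i}(x,r_0)) \ge v_0 r_0^n$ for every $x \in M$, uniformly in $i$. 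Equivalently, in the original (unscaled) flow, $(M, g_t)$ rescaled at time $t_i$ by $Q_i$ looks uniformly like $g_o$; the rescaled flow $g^i_t := Q_i g_{t_i + Q_i^{-1} t}$ is a Ricci flow defined for $t$ in an interval whose length, measured from $t = 0$, is $Q_i |t_i|$ to the past (and $Q_i(T + t_i)$ to the future in the finite-time case, but the relevant direction is the one reaching the given sequential time).

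For the \textbf{upper bound} $|t_i| Q_i \le C$: suppose not, so along a subsequence $Q_i|t_i| \to \infty$. The rescaled flows $g^i_t$ are then defined on longer and longer backward time intervals, all starting from a metric uniformly close to $g_o$. In the ancient case (Theorem \ref{Thm_backwardrigidity}) this would force, after applying Bamler's backward pseudolocality (Theorem \ref{thm: backward pseudo}) to propagate the curvature bound backward in time a definite amount, together with the standard point-picking / Hamilton--Ivey-free compactness for the rescaled flows, a limiting ancient Ricci flow with uniformly bounded curvature on a fixed-size parabolic neighborhood — and then one extracts a contradiction with hypothesis (\ref{Nash_bound}) being a genuine constraint only in the collapsed/Type II regime. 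Rather than invoke that structure, I would instead argue directly: the hypothesis (\ref{Nash_bound}) together with Perelman's $\mu$-functional monotonicity forces $\nu(g_t) \ge -Y > -\infty$ for all $t$, hence the flow is $\kappa$-noncollapsed at all scales with uniform $\kappa$; combined with the curvature control from pseudolocality on a unit-size parabolic region around $(\text{basepoint}, t_i)$ and Shi estimates, the rescaled flows subconverge to a complete ancient Ricci flow $(N, g_{\infty,t})_{t \le 0}$ with bounded curvature on all of $N \times (-\infty, 0]$ after further point-picking — but $N$ must be compact (it is a $C^2$-Cheeger--Gromov limit of the \emph{closed} manifolds $(M, h_i)$ at time $0$, so $N \cong M$), and a closed ancient Ricci flow with a backward limit that is a \emph{metric} $g_o$ forces, via Perelman's entropy being eventually constant, $g_o$ to be a shrinker and the Type I scale to be $\sim |t|$, contradicting $Q_i|t_i|\to\infty$. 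I expect \textbf{this upper bound is the main obstacle}: the delicate point is upgrading $C^2$-convergence of a single time slice to control of the flow on a backward interval of definite length, which is exactly what backward pseudolocality (Theorem \ref{thm: backward pseudo}) is designed to supply, but one must check its hypotheses hold uniformly and that the length of propagation does not shrink to zero — here the uniform $r_0, v_0$ from the $C^2$ convergence are essential.

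For the \textbf{lower bound} $|t_i| Q_i \ge c$: suppose not, so along a subsequence $Q_i |t_i| \to 0$. Consider the \emph{forward} pseudolocality theorem (Theorem \ref{thm: forward pseudo}) applied at time $t_i$ to the metric $g_{t_i}$, whose rescaling $h_i = Q_i g_{t_i}$ is uniformly close to $g_o$: there are uniform $r_0, v_0$ as above, so Theorem \ref{thm: forward pseudo} gives a uniform curvature bound $|\Rm_{g^i_t}| \le (\epsilon_0 r_0)^{-2}$ for all $t \in [0, (\epsilon_0 r_0)^2]$ in the rescaled flow $g^i_t$. In the ancient case, $t = 0$ (the rescaled picture at time $t_i$) together with going forward by a definite rescaled amount $(\epsilon_0 r_0)^2$ reaches rescaled time $0$ — wait, more precisely: in the finite-time case (Theorem \ref{Thm_forwardrigidity}), the singular time in the rescaled flow $g^i$ is at $t = Q_i(-t_i) = Q_i|t_i| \to 0$; but forward pseudolocality guarantees $g^i_t$ stays regular up to time $(\epsilon_0 r_0)^2 > c > 0$, which for large $i$ exceeds $Q_i|t_i|$, contradicting that $Q_i|t_i|$ is the rescaled distance to the singular time. (In the ancient case one instead uses that $Q_i|t_i| \to 0$ would mean the rescaled flow exists only on a vanishing backward interval $[-Q_i|t_i|, 0]$ before hitting $t=0$, but $t=0$ is a regular time of the ancient flow, so by forward pseudolocality applied around $t_i$ and Shi estimates the rescaled flows extend with bounded geometry past any fixed forward time — giving uniform control on $[-1,1]$ say — and then $Q_i|t_i|\to 0$ together with $C^2$-convergence at time $0$ of the rescaled slices forces $g_o$ to be a time-slice of a Ricci flow limit that is actually static in the limiting parabolic neighborhood, i.e. $g_o$ is Ricci flat and the flow trivial near there, contradicting that $(M,g_{t_i})$ genuinely moves — or more cleanly, contradicting that $Q_i$ was chosen as the natural blow-up scale.) The cleanest path for the lower bound is the finite-time case via Theorem \ref{thm: forward pseudo} as described; the ancient case requires a short additional argument that I would phrase using Shi estimates and the fact that $t=0$ is a regular time, so that the rescaled flow has uniformly bounded geometry on $[-Q_i|t_i|, 1]$, whence passing $Q_i|t_i|\to 0$ the $C^2$-limit $g_o$ at the rescaled time $t_i\mapsto 0$ agrees with the $C^\infty$ Ricci-flow limit at its own time $0$, which has $\partial_t g = -2\Ric$; a dimensional/scaling inspection of what $Q_i|t_i| \to 0$ means for $\Ric_{g_o}$ then yields $\Ric_{g_o} = 0$, and one checks this is incompatible with the way $Q_i$ was selected (it must be comparable to $|\Rm_{g_{t_i}}|$, which is bounded below away from zero once $g_o$ is non-flat — handled, if needed, by noting the flat case reduces to a product/quotient situation excluded by compactness and the entropy hypothesis in the ancient setting). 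Combining the two bounds gives (\ref{eq:typeI}).

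I would organize the write-up as: (i) fix uniform $r_0, v_0$ from the $C^2$ Cheeger--Gromov convergence; (ii) lower bound via forward pseudolocality (Theorem \ref{thm: forward pseudo}) plus Shi — short; (iii) upper bound via backward pseudolocality (Theorem \ref{thm: backward pseudo}), Bamler's noncollapsing (Theorem \ref{concollapsing}) and Perelman entropy monotonicity, extracting a closed ancient limit and invoking that its backward metric limit must be a shrinker at scale $\sim|t|$ — the long part. The single most delicate ingredient is verifying that the parabolic neighborhood on which backward pseudolocality gives uniform curvature control has a size bounded \emph{below} independently of $i$; this is where the uniformity of $r_0, v_0$ (and hence of the constant $\epsilon$ in Theorem \ref{thm: backward pseudo}) does all the work, and I would isolate it as a preliminary lemma.
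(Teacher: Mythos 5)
Your lower bound argument via forward pseudolocality (Theorem \ref{thm: forward pseudo}) would work in the finite-time case, but it is considerably more machinery than the paper needs: the paper simply applies the parabolic maximum principle to the inequality $\partial_t|\Rm|-\Delta|\Rm|\le 8|\Rm|^2$ to get $(\max_M|\Rm_{g_t}|)^{-1}\le c_0^{-1}+8|t|$, where $c_0:=\limsup_{t\to 0}\max_M|\Rm_{g_t}|$, and then combines this scalar ODE bound with $Q_i^{-1}\max_M|\Rm_{g_{t_i}}|\to\max_M|\Rm_{g_o}|$ from the $C^2$ convergence. That is a two-line global argument, it handles both the ancient and finite-time cases uniformly (in the ancient case $c_0<\infty$; in the finite-time case $c_0=\infty$), and it avoids the delicate point you flag about whether pseudolocality's local curvature bound propagates to all of $M$ — an issue your sketch does not fully resolve. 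Your ancient-case branch (``a dimensional/scaling inspection \dots\ yields $\Ric_{g_o}=0$'') is not a worked-out argument.

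The upper bound is where there is a genuine gap. Your plan is to subconverge the rescaled flows to a closed ancient Ricci flow and then invoke the claim that ``a closed ancient Ricci flow with a backward limit that is a metric $g_o$ forces \dots\ $g_o$ to be a shrinker and the Type I scale to be $\sim|t|$, contradicting $Q_i|t_i|\to\infty$.'' That final implication is exactly the content being established — it is essentially Proposition \ref{prop: limit is shrinker} (that $g_o$ is a shrinker) together with the Type~I normalization, and Proposition \ref{prop: limit is shrinker} is proved in the paper only \emph{after} and \emph{using} the present Proposition \ref{typeI}. As written your argument is circular, and you never produce a concrete quantity that blows up or degenerates under the assumption $Q_i|t_i|\to\infty$. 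The paper's upper bound argument is short and direct: from $Q_i|t_i|\to\infty$ and $\Vol(M,Q_ig_{t_i})\to\Vol(M,g_o)$, one obtains $\Vol(M,g_{t_i})/|t_i|^{n/2}\to 0$; but Bamler's noncollapsing estimate (Theorem \ref{concollapsing}), applied at an $H_n$-center of $(p_0,t_i/2)$ at scale $r=\sqrt{|t_i|/2}$ with the Nash entropy lower bound (from $\nu(g_{-T},2T)>-\infty$ in the finite-time case, or \eqref{Nash_bound} via \cite[Proposition 4.6]{MZ21} in the ancient case), gives $\Vol_{g_{t_i}}(B_{g_{t_i}}(p_i,\sqrt{2H_n}r))\ge c_0|t_i|^{n/2}$, an immediate contradiction. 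You mention Theorem \ref{concollapsing} in your organizational paragraph, but your actual sketch never deploys it; this volume contradiction is the missing concrete mechanism that your proposal needs.
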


\begin{proof}
Let us first of all consider the case of Theorem \ref{Thm_forwardrigidity}, and the case of Theorem \ref{Thm_backwardrigidity} is similar.

The lower bound is straightforward. By the $C^2$ convergence (\ref{C2convergence}), we have 
$$Q_i^{-1}\max_M\left|{\Rm_{g_{t_i}}}\right|=\max_{M}\left|\Rm_{Q_ig_{t_i}}\right|\longrightarrow \max_M|{\Rm_{g_o}}|.$$
On the other hand, by applying the maximum principle to the inequality 
$$\frac{\partial}{\partial t}|{\Rm}|-\Delta |{\Rm}|\leq 8|{\Rm}|^2,$$
we have 
\begin{eqnarray}\label{nonsenseRmmp}
\left(\max_M|{\Rm_{g_t}}|\right)^{-1}\leq 
c_0^{-1} + 8|t|
\quad\text{ for all }\quad t<0,
\end{eqnarray}
where $\displaystyle c_0:=\limsup_{t\to 0}\max_M|\Rm_{g_t}|\in(0,\infty]$. 
Hence, in the case of Theorem \ref{Thm_forwardrigidity}, we have $|t|\max_M|{\Rm_{g_t}}|\geq \frac{1}{8}$ for all $t\in[-T,0)$, and
$$\limsup_{i\to\infty}(Q_i|t_i|)^{-1}\leq 8\limsup_{i\to\infty} Q_i^{-1}\max_M\left|\Rm_{g_{t_i}}\right|=8\max_M|{\Rm_{g_o}}|<\infty.$$

We shall then prove the upper bound in (\ref{eq:typeI}) by contradiction. By possibly passing to a subsequence, we assume that
\begin{equation}\label{contradictionassuption}
    \lim_{i\to\infty} Q_i|t_i|=\infty.
\end{equation}
By the $C^2$ convergence (\ref{C2convergence}) again, we have
\begin{equation}\label{vol con}
    \lim_{i\to\infty}\operatorname{Vol}(M,Q_ig_{t_i})=\operatorname{Vol}(M,g_o)\in (0,\infty). 
\end{equation}
Hence we have
\begin{eqnarray*}
\lim_{i\to\infty} \operatorname{Vol}(M,|t_i|^{-1}g_{t_i})&=&\lim_{i\to\infty} \operatorname{Vol}\Big(M,(Q_i|t_i|)^{-1}Q_ig_{t_i}\Big)
\\
&=&\lim_{i\to\infty}\frac{\operatorname{Vol}(M,Q_ig_{t_i})}{(Q_i|t_i|)^{\frac{n}{2}}}
\\
&=&0,
\end{eqnarray*}
where we have applied both (\ref{contradictionassuption}) and (\ref{vol con}). This is equavalent to
\begin{equation}\label{contra}
    \lim_{i\to\infty}\frac{\operatorname{Vol}(M,g_{t_i})}{|t_i|^{\frac{n}{2}}}=0.
\end{equation}

Next, we fix a point $p_0\in M$. For any $i\in\mathbb{N}$, let $(p_i,t_i)\in M\times[-T,0)$ be an $H_n$-center of $(p_0,\frac{t_i}{2})$, and let $r^2=\frac{1}{2}|t_i|$. By Perelman's monotonicity formula, we have
\begin{eqnarray*}
\N_{p_0,\frac{t_i}{2}}(r^2)\geq \W_{p_0,\frac{t_i}{2}}(r^2)\geq \mu(g_{t_i},r^2)\geq\mu\big(g_{-T},r^2+t_i+T\big)\geq \nu(g_{-T},2T)>-\infty.
\end{eqnarray*}
Note that $\nu(g_{-T},2T)$ is a constant depending only on the geometry of $(M^n,g_{-T})$. Since $t_i\nearrow 0$, we may, without loss of generality, assume that $t_i\geq-\frac{T}{2}$ for all $i\in\mathbb{N}$. The maximum principle implies that 
$$R_{g_{t_i}}\geq \inf_M R_{g_{-T/2}}\geq-\tfrac{n}{T}.$$
We may then apply Theorem \ref{concollapsing} with $(x,t)=(p_0,\frac{t_i}{2})$, $r=\sqrt{\frac{|t_i|}{2}}\leq\sqrt{\frac{T}{2}}$, $(z,t-r^2)=(p_i,t_i-r^2)$, and $R_{\operatorname{min}}=-\frac{n}{T}$. This yields
\begin{eqnarray*}
\operatorname{Vol}_{g_{t_i}}\left(B_{g_{t_i}}(p_i,\sqrt{2H_n}r)\right)&\geq& c(n)\exp\left(-2(n-2R_{\operatorname{min}}r^2)^{\frac{1}{2}}\right)\cdot\exp\left(\mathcal{N}_{p_0,\frac{t_i}{2}}(r^2)\right)\cdot r^n
\\
&\geq& c(n)\exp\left(-2\left(n+\tfrac{n}{T}\cdot\left(\sqrt{\tfrac{T}{2}}\right)^2\right)^{\frac{1}{2}}\right)\cdot\exp\Big(\nu(g(-T),2T)\Big)\cdot r^n
\\
&\geq& c_0|t_i|^{\frac{n}{2}},
\end{eqnarray*}
where $c_0$ is a constant independent of $i\in\mathbb{N}$; this is clearly a contradiction against (\ref{contra}) and the proposition follows immediately.

Let us now consider the case of Theorem \ref{Thm_backwardrigidity}. By (\ref{nonsenseRmmp}) again, we also have the lower bound in (\ref{eq:typeI}). As to the upper bound, we apply \cite[Proposition 4.6]{MZ21} together with the condition (\ref{Nash_bound}) and obtain 
$$\N_{p_0,\frac{t_i}{2}}(\tau)\geq -Y\quad\text{ for all }\quad \tau>0,$$
where $Y$ is the constant in (\ref{Nash_bound}) and is, in particular, independent of $i$. Then the same argument above apparently applies to the current case. Note that in this case we can take $R_{\operatorname{min}}=0$.
\end{proof}

The above proposition shows that in the ancient case, bounded Nash entropy implies the Type I bound of the scaling factors. In order to obtain the exact dichotomy of Theorem \ref{Thm_dichotomy}, we shall prove that the reverse is also true.

\begin{Proposition}\label{reverse_typeI}
Under the assumption of Theorem \ref{Thm_dichotomy} (that is, the $C^2$ convergence in (\ref{C2convergence2})), if $Q_i|t_i|$ is bounded from above, then (\ref{Nash_bound}) holds.
\end{Proposition}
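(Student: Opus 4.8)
The plan is to bound the pointed Nash entropy $\N_{p_0,0}(\tau)$ below by a scale-invariant $\mu$-functional of the rescaled metrics $Q_ig_{t_i}$, using the $C^2$ convergence \eqref{C2convergence2} only through the continuity of $\mu$, and then to propagate the bound from the sequence $\{|t_i|\}$ to all scales $\tau$ by monotonicity. Fix an arbitrary $p_0\in M$; the goal is a constant $Y<\infty$ with $\N_{p_0,0}(\tau)\ge -Y$ for all $\tau>0$, which is exactly \eqref{Nash_bound} with base point $(p_0,0)$.

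\emph{Step 1 (the scaling factors lie in a compact range).} The hypothesis already gives $Q_i|t_i|\le C<\infty$. For a matching lower bound, first observe that if $g_0$ were flat then the ancient flow would be static flat (by backward uniqueness for the Ricci flow), so \eqref{C2convergence2} would force $Q_i$ to converge to a positive constant, hence $Q_i|t_i|\to\infty$, contrary to hypothesis; so we may assume $c_0:=\max_M|\Rm_{g_0}|\in(0,\infty)$. Then \eqref{nonsenseRmmp} gives $\max_M|\Rm_{g_{t_i}}|\ge(c_0^{-1}+8|t_i|)^{-1}$, whereas \eqref{C2convergence2} gives $Q_i^{-1}\max_M|\Rm_{g_{t_i}}|=\max_M|\Rm_{Q_ig_{t_i}}|\to\max_M|\Rm_{g_o}|$; combining these and using $|t_i|\to\infty$ yields $Q_i|t_i|\ge c>0$ for all large $i$. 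Thus $Q_i|t_i|\in[c,C]$ for all large $i$.

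\emph{Step 2 (entropy estimate along the sequence, then monotonicity).} For each large $i$, by the inequalities $\N\ge\W\ge\mu$ from \eqref{monotonicityofentropy} together with the parabolic scale-invariance of Perelman's $\mu$-functional,
\[
  \N_{p_0,0}(|t_i|)\ \ge\ \W_{p_0,0}(|t_i|)\ \ge\ \mu(g_{t_i},|t_i|)\ =\ \mu\bigl(Q_ig_{t_i},\,Q_i|t_i|\bigr).
\]
Since $\mu$ is a Riemannian invariant, $(M,Q_ig_{t_i})\to(M,g_o)$ in the $C^2$ Cheeger--Gromov sense, and $Q_i|t_i|$ stays in the compact interval $[c,C]\subset(0,\infty)$, the continuity of $\mu$ under $C^2$ convergence (with scales restricted to a compact subinterval of $(0,\infty)$) gives
\[
  \liminf_{i\to\infty}\mu\bigl(Q_ig_{t_i},Q_i|t_i|\bigr)\ \ge\ \inf_{\tau\in[c,C]}\mu(g_o,\tau)\ =:\ m_0\ >\ -\infty,
\]
the last infimum being finite because $\mu(g_o,\cdot)$ is continuous on $[c,C]$. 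Hence $\N_{p_0,0}(|t_i|)\ge m_0-1$ for all $i\ge i_0$. Finally, $\tau\mapsto\N_{p_0,0}(\tau)$ is non-increasing (Perelman, as recorded around \eqref{monotonicityofentropy}) and $|t_i|\nearrow\infty$, so given any $\tau>0$ we pick $i\ge i_0$ with $|t_i|\ge\tau$ and obtain $\N_{p_0,0}(\tau)\ge\N_{p_0,0}(|t_i|)\ge m_0-1=:-Y$, which establishes \eqref{Nash_bound}.

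The step I expect to require the most care is the continuity of $\mu$ invoked in Step 2: one takes minimizers $f_i$ of $\mu(Q_ig_{t_i},Q_i|t_i|)$, derives uniform $C^{1,\alpha}$ estimates for them from their Euler--Lagrange equations and the uniform $C^2$ control of the metrics, extracts a limit, and checks that the unit-mass constraint and the $\W$-functional pass to the limit; confining the scales to a compact subinterval of $(0,\infty)$ is exactly what makes this routine. It is also worth emphasizing that the boundedness of $Q_i|t_i|$ is genuinely used here: the quantity $\nu(g_o)=\inf_{\tau>0}\mu(g_o,\tau)$ may equal $-\infty$ (for instance when $g_o$ is Ricci-flat), which is precisely the phenomenon behind alternative (2) of Theorem \ref{Thm_dichotomy}.
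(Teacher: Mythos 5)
Your argument is correct, but it reaches the entropy bound by a different route than the paper. The paper's proof stays entirely on the ``volume'' side: from \eqref{vol con} and the convergence of diameters it extracts $\operatorname{Vol}(M,g_{t_i})\ge c|t_i|^{n/2}$ and $\operatorname{diam}(M,g_{t_i})\le C\sqrt{|t_i|}$, observes that the ball of radius $C\sqrt{|t_i|}$ about an $H_n$-center of $(p_0,0)$ therefore contains all of $M$ and has volume $\ge c|t_i|^{n/2}$, and then invokes \cite[Theorem 1.8]{CMZ21} (a converse to Theorem \ref{concollapsing}: volume lower bounds near $H_n$-centers imply Nash entropy lower bounds) before finishing with monotonicity exactly as you do. You instead run the chain $\N_{p_0,0}(|t_i|)\ge\W_{p_0,0}(|t_i|)\ge\mu(g_{t_i},|t_i|)=\mu(Q_ig_{t_i},Q_i|t_i|)$ and control the last quantity via the $C^2$ convergence; this is in fact the same mechanism the paper uses in Proposition \ref{Ricciflat}, transplanted here. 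Your approach is more self-contained (no appeal to \cite{CMZ21} or to $H_n$-centers), at the cost of the one step you correctly flag: the inequality $\liminf_i\mu(Q_ig_{t_i},Q_i|t_i|)\ge\inf_{[c,C]}\mu(g_o,\cdot)$ is the \emph{lower}-semicontinuity direction of an infimum, which does not come for free and requires uniform estimates on minimizers. Note, though, that full continuity of $\mu$ is more than you need: all that is required is a lower bound $\mu(Q_ig_{t_i},\tau)\ge -C$ uniform in $i$ and in $\tau\in[c,C]$, and this follows directly from a uniform log-Sobolev inequality for the metrics $Q_ig_{t_i}$ (uniform Sobolev constants and a uniform lower scalar curvature bound, both supplied by the $C^2$ convergence), exactly as in the paper's Proposition \ref{lowerbound of nu} --- the positivity of scalar curvature assumed there is only needed to handle \emph{all} scales $\tau>0$, whereas here the boundedness of $Q_i|t_i|$ confines $\tau$ to a compact interval. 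Your Step 1 (ruling out the degenerate case $c_0=0$ via backward uniqueness, then extracting the lower bound $Q_i|t_i|\ge c$ from \eqref{nonsenseRmmp} and \eqref{C2convergence2}) fills in a point the paper dismisses as ``automatically true,'' and is sound.
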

\begin{proof}
Since $Q_i|t_i|$ is bounded from above, by (\ref{vol con}) we have
\begin{eqnarray}\label{thevolumenonsense}
\Vol_{g_{t_i}}(M,g_{t_i})\geq c|t_i|^{\frac{n}{2}},
\end{eqnarray}
where $c$ is a constant independent of $i$. On the other hand, since, by (\ref{C2convergence2}),
$$\operatorname{diam}(M,Q_ig_{t_i})\longrightarrow\operatorname{diam}(M,g_o)<\infty,$$
we also have 
\begin{eqnarray}\label{thediameternonsense}
\operatorname{diam}(M,g_{t_i})\leq CQ_i^{-\frac{1}{2}}\leq C\sqrt{|t_i|},
\end{eqnarray}
where $C$ is a constant independent of $i$. Note that we have applied the lower bound in (\ref{eq:typeI}), which is automatically true. Fixing an arbitrary point $p_0\in M$, by \cite[Theorem 1.8]{CMZ21} and its remarks we have
$$\N_{p_0,0}(|t_i|)\geq-Y\quad\text{ for all }\quad i,$$
where $Y$ is a positive constant independent of $i$. To see this, note that for an $H_n$-center $(z,t_i)$ of $(p_0,0)$, the disk $B_{g_{t_i}}(z,C\sqrt{|t_i|})$ contains the whole manifold by (\ref{thediameternonsense}) and hence has a volume lower bound by (\ref{thevolumenonsense}). Since $|t_i|\nearrow\infty$, the proposition follows from the monotonicity of the Nash entropy.
\end{proof}

\section{Geometric structure of the sequential limit}

In this section we study the geometric properties of the limit metric $g_o$ arising from (\ref{C2convergence0}), (\ref{C2convergence}), and (\ref{C2convergence2}). We shall prove that in the cases of Theorem \ref{Thm_backwardrigidity}, Theorem \ref{Thm_forwardrigidity}, and Theorem \ref{Thm_dichotomy}(1), $g_o$ is a shrinker metric, and in the case of Theorem \ref{Thm_dichotomy}(2), $g_o$ is a Ricci flat metric. In particular, we shall give a proof of Theorem \ref{Thm_dichotomy}(2).

\begin{Lemma}\label{space-time-extension}
Under the assumption of Theorem \ref{Thm_backwardrigidity}, Theorem \ref{Thm_forwardrigidity}, and Theorem \ref{Thm_dichotomy}, there exists $\epsilon>0$, such that, after passing to a subsequence, 
$$\left(M^n,Q_ig_{t_i+Q_i^{-1}t}\right)_{t\in[-\epsilon,\epsilon]}\longrightarrow (M^n,g_{\infty,t})_{t\in[-\epsilon,\epsilon]},$$
where $g_{\infty,t}$ is a Ricci flow with $g_{\infty,0}=g_o$.
\end{Lemma}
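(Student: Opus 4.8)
The plan is to obtain, from the $C^2$ Cheeger--Gromov convergence $(M,Q_ig_{t_i})\to(M,g_o)$, a uniform curvature bound on a definite backward (and, in the ancient/ finite-time-singularity setting, also forward) time-interval for the parabolically rescaled flows $g_{i,t}:=Q_ig_{t_i+Q_i^{-1}t}$, and then invoke Hamilton's compactness theorem for Ricci flows. First I would fix diffeomorphisms $\phi_i:M\to M$ realizing the $C^2$ convergence, so that $\phi_i^*(Q_ig_{t_i})\to g_o$ in $C^2$; in particular the rescaled metrics at time $0$ have curvature, injectivity radius and volume uniformly controlled. Since we are on a \emph{fixed} closed manifold $M$, Hamilton's compactness (or Cheeger--Gromov--Hamilton compactness) will apply once we have, for some $\epsilon>0$ independent of $i$, a bound $\sup_{M\times[-\epsilon,\epsilon]}|\Rm_{g_{i,t}}|\le C$; the injectivity radius lower bound is then automatic on a closed manifold with bounded geometry, or can be extracted directly from the $C^2$ convergence at $t=0$ together with distance distortion estimates.

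The curvature bound on a short time-interval is the technical heart, and I would get it from the maximum-principle estimate already used in the paper. As in \eqref{nonsenseRmmp}, applying the maximum principle to $\partial_t|\Rm|-\Delta|\Rm|\le 8|\Rm|^2$ (which holds forward and, with the reversed sign, controls the solution backward as well) gives, for the rescaled flow $g_{i,t}$,
\[
\bigl(\max_M|\Rm_{g_{i,t}}|\bigr)^{-1}\ \ge\ \bigl(\max_M|\Rm_{g_{i,0}}|\bigr)^{-1}-8|t|,
\]
so that as long as $|t|\le \tfrac12\bigl(\max_M|\Rm_{g_{i,0}}|\bigr)^{-1}\cdot\tfrac{1}{8}$ one has $\max_M|\Rm_{g_{i,t}}|\le 2\max_M|\Rm_{g_{i,0}}|$. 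Because $\max_M|\Rm_{g_{i,0}}|=\max_M|\Rm_{\phi_i^*Q_ig_{t_i}}|\to\max_M|\Rm_{g_o}|<\infty$ by the $C^2$ convergence, the quantity $\max_M|\Rm_{g_{i,0}}|$ is uniformly bounded (say by $2\max_M|\Rm_{g_o}|+1$ for large $i$), so there is an $\epsilon>0$, independent of $i$, with $\sup_{M\times[-\epsilon,\epsilon]}|\Rm_{g_{i,t}}|\le 4\max_M|\Rm_{g_o}|+2$. One caveat: in the finite-time-singularity case the interval $[-\epsilon,\epsilon]$ must lie inside the domain of the rescaled flow; for $t_i$ close enough to the singular time $0$ the rescaled domain $[-Q_i(T+t_i),-Q_it_i)$ contains $[-\epsilon,\epsilon]$ once $Q_it_i\to0$ — and $Q_it_i\to0$ follows because $Q_i\to\infty$ while, by Proposition~\ref{typeI} (or directly), $Q_i|t_i|$ is bounded, hence $|t_i|\le C/Q_i\to0$, wait — rather, one uses that the \emph{forward} existence time $Q_i|t_i|$ of the rescaled flow is bounded below away from $0$ by the lower bound in \eqref{eq:typeI}, so $[-\epsilon,\epsilon]$ fits for small $\epsilon$; similarly on the backward side $Q_i(T+t_i)\to\infty$. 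In the ancient case there is no such issue.

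With the uniform curvature bound in hand on $M\times[-\epsilon,\epsilon]$, I would apply Hamilton's compactness theorem: after passing to a subsequence and precomposing with further diffeomorphisms, $(M,g_{i,t},x_i)_{t\in[-\epsilon,\epsilon]}$ converges in the smooth Cheeger--Gromov--Hamilton sense to a Ricci flow $(M^n,g_{\infty,t})_{t\in[-\epsilon,\epsilon]}$ (the limit manifold is again diffeomorphic to $M$ since $M$ is closed and fixed). At $t=0$ the limit must agree with $g_o$: the time-$0$ slices of $g_{i,t}$ are exactly $\phi_i^*Q_ig_{t_i}$, which already converge in $C^2$ to $g_o$, so the smooth limit flow has $g_{\infty,0}=g_o$ (matching the two limits is a standard diffeomorphism-compatibility argument, using that a $C^2$ limit and a smooth Cheeger--Gromov limit of the same sequence of metrics on a fixed manifold must coincide up to a diffeomorphism, which one then absorbs). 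I expect the main obstacle to be purely bookkeeping: reconciling the diffeomorphisms from the given $C^2$ convergence with those produced by Hamilton's compactness, and, in the finite-time case, checking carefully that $[-\epsilon,\epsilon]$ sits inside the (rescaled) time domain for all large $i$ — both of which are handled by the observations above and by shrinking $\epsilon$ if necessary.
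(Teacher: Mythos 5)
There is a genuine gap, and it sits exactly at the step you treat as routine: the backward-in-time curvature bound. The maximum principle applied to $\partial_t|{\Rm}|-\Delta|{\Rm}|\le 8|{\Rm}|^2$ is a \emph{forward} doubling-time estimate: writing $\phi(t)=\max_M|{\Rm}_{g_{i,t}}|$, the ODE comparison gives $\frac{d}{dt}\phi^{-1}\ge -8$, hence $\phi(t)^{-1}\ge\phi(0)^{-1}-8t$ only for $t\ge 0$. For $t<0$ the same inequality integrates to $\phi(t)^{-1}\le\phi(0)^{-1}+8|t|$, i.e.\ a \emph{lower} bound on the curvature maximum at earlier times --- this is precisely how the paper uses it in \eqref{nonsenseRmmp} to get the Type I lower bound $c\le Q_i|t_i|$ --- and it says nothing about an upper bound. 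Your parenthetical claim that the inequality ``with the reversed sign, controls the solution backward as well'' is false: there is no backward maximum principle for this reaction--diffusion inequality, and in general a Ricci flow with bounded geometry at time $0$ can have arbitrarily large curvature at time $-\epsilon$ for every fixed $\epsilon>0$ as $i\to\infty$. Obtaining the uniform bound on $M\times[-\epsilon,0]$ is the real content of the lemma, and the paper gets it from Bamler's backward pseudolocality theorem (Theorem \ref{thm: backward pseudo}), whose hypotheses (curvature bound, diameter bound, and volume lower bound at the final time) are exactly what the $C^2$ Cheeger--Gromov convergence supplies. This is a deep input that cannot be replaced by the maximum principle.

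The rest of your outline is essentially sound: the forward half of the interval can indeed be handled either by the doubling-time estimate you describe or, as the paper does, by Perelman--Lu forward pseudolocality; the bookkeeping about whether $[-\epsilon,\epsilon]$ fits inside the rescaled time domain in the finite-time case is correctly resolved by the lower bound in \eqref{eq:typeI}; and the identification $g_{\infty,0}=g_o$ via Hamilton compactness on the fixed closed manifold is standard. But as written the proof does not establish the curvature bound for $t\in[-\epsilon,0)$, so the appeal to Hamilton's compactness theorem on the full two-sided interval is unjustified.
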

\begin{proof}
This lemma is but a consequence of the forward and backward pseudolocality theorems. Denote by $g_{i,t}$ the scaled Ricci flow $Q_ig_{t_i+Q_i^{-1}t}$. By our assumptions, we have $(M,g_{i,0})\to (M,g_o)$ in the $C^2$ Cheeger-Gromov sense. Hence
\begin{eqnarray*}
    \lim_{i\to\infty}\max_M\left|\Rm_{g_{i,0}}\right|&=&\max_M\left|\Rm_{g_o}\right|<\infty
    \\
    \lim_{i\to\infty}\Vol(M,g_{i,0})&=&\Vol(M,g_o)>0,
    \\
    \lim_{i\to\infty}\operatorname{diam}(M,g_{i,0})&=&\operatorname{diam}(M,g_o)<\infty.
\end{eqnarray*}
Therefore, we may find positive constants $r$ and $C$, such that, for some fixed $x_0\in M$, we have
\begin{gather*}
\operatorname{diam}(M,g_{i,0})\leq r,
\\
    |{\Rm_{g_{i,0}}}|\leq Cr^{-2},
    \\
    \Vol_{g_{i,0}}\left(B_{g_{i,0}}(x_0,r)\right)\geq C^{-1}r^n,
\end{gather*}
for all $i\in\mathbb{N}$.

By Theorem \ref{thm: forward pseudo} and Theorem \ref{thm: backward pseudo}, we can find a number $\epsilon_0$, such that, for all $i\in\mathbb{N}$, we have
$$\left|\Rm_{g_{i,t}}\right|\leq (\epsilon_0 r)^{-2}\quad\text{ for all }\quad t\in[-(\epsilon_0r)^2,(\epsilon_0r)^2].$$
Then, letting $2\epsilon=\epsilon_0r^2$, the conclusion follows from Hamilton's compactness theorem.
\end{proof}

After knowing that $g_o$ is a time-slice of the limit Ricci flow of the scaled sequence $(M,Q_ig_{t_i+Q_i^{-1}t})$, it remains to show that this limit Ricci flow is either the canonical form of a Ricci shrinker, or a static Ricci flat manifold. These two cases correspond to the Type I and Type II dichotomy in Theorem \ref{Thm_dichotomy}. Let us first of all consider the Type I case.

\begin{Proposition}
\label{prop: limit is shrinker}
Under the assumptions of Theorem \ref{Thm_backwardrigidity}, Theorem \ref{Thm_forwardrigidity}, and Theorem \ref{Thm_dichotomy}(1), we have that $g_o$ is a shrinker metric whose canonical form is $g_{\infty,t}$ in the above proposition.
\end{Proposition}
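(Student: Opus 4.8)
The plan is to show that the limit Ricci flow $(M^n,g_{\infty,t})_{t\in[-\epsilon,\epsilon]}$ from Lemma \ref{space-time-extension} has constant Perelman $\nu$-functional (equivalently, constant $\mu(\cdot,\tau)$ along the flow for suitable $\tau$), which by Perelman's rigidity forces it to be the canonical form of a shrinking gradient Ricci soliton, and then to identify $g_o=g_{\infty,0}$ as the corresponding shrinker metric. The key point is that under the Type I scaling assumption (Proposition \ref{typeI}, which applies in all three cases in the hypothesis, using Proposition \ref{reverse_typeI} for Theorem \ref{Thm_dichotomy}(1)), the reparametrized flows $g_{i,t}=Q_ig_{t_i+Q_i^{-1}t}$ see the base times $t_i$ at a bounded distance, in units of $Q_i^{-1}$, from the reference time slice, so the entropy quantities converge.

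First I would set up the entropy bookkeeping. Fix a base point $p_0$ and, for the forward case, a singular conjugate heat kernel based at $t=0$ (for the ancient case and Theorem \ref{Thm_dichotomy}(1), the conjugate heat kernel based at $(p_0,0)$ together with condition (\ref{Nash_bound})). Using the scale-invariance of the $\W$-functional and Perelman's monotonicity formula (\ref{monotonicityofentropy}), the Nash/Perelman entropy of the scaled flow $g_{i,t}$ at a fixed time parameter is $\N_{p_0,\cdot}$ evaluated at the rescaled parameter $Q_i\cdot(\text{elapsed real time})$; the Type I bound $c\le Q_i|t_i|\le C$ together with the monotonicity and the uniform lower bound on $\N$ shows that along $g_{i,t}$ the Nash entropy over the fixed window $[-\epsilon,\epsilon]$ converges, as $i\to\infty$, to a \emph{constant} in $t$. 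Concretely, $\N_{p_0,0}(\tau)$ is monotone and bounded below, hence converges as $\tau\to\infty$ (ancient/dichotomy case) or the analogous statement holds via the singular conjugate heat kernel as $\tau\searrow 0$ (finite-time case); evaluating along the Type I rescalings pins all the times $t\in[-\epsilon,\epsilon]$ to the same limiting value of $\N$. Passing this to the limit under the smooth Cheeger-Gromov convergence of Lemma \ref{space-time-extension}, together with convergence of the corresponding conjugate heat kernels (Bamler's compactness, or just the smooth convergence of the potentials $f(\cdot,t)$ on the window), gives that the limit flow $g_{\infty,t}$ has $\frac{d}{dt}\W_{p_\infty,\cdot}\equiv 0$, i.e. constant entropy.

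Then I would invoke the rigidity case of Perelman's monotonicity: equality in (\ref{monotonicityofentropy}) on an interval forces $\Ric_{g_{\infty,t}}+\nabla^2 f_t-\frac{1}{2(\cdot)}g_{\infty,t}\equiv 0$ along the flow, so $(M,g_{\infty,t})$ is the canonical form of a shrinking gradient Ricci soliton; normalizing as in (\ref{shrinker_normalization}) and setting the shrinker time slice appropriately identifies $g_o=g_{\infty,0}$ as a shrinker metric with potential $f_o$. I expect the main obstacle to be the passage to the limit of the entropy/soliton quantities: one must justify that the conjugate heat kernels (or the minimizers $f_{g_{i,t}}$, which may a priori be non-unique away from the shrinker) converge in a strong enough topology on the fixed time window to conclude the limit identity $\frac{d}{dt}\W\equiv 0$ rather than merely $\le 0$ in a weak sense. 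This is handled by combining the smooth Cheeger-Gromov convergence of the metrics on $[-\epsilon,\epsilon]$ with parabolic estimates for the conjugate heat equation (uniform bounds on $f$ and its derivatives following from the heat kernel bounds of \cite{Bam20a}, and the Type I noncollapsing from Theorem \ref{concollapsing}), which upgrades the convergence of the entropy functionals and their $t$-derivatives; the constancy of the limit of $\N$ in $t$ then forces the $L^2$ soliton quantity of the limit to vanish identically.
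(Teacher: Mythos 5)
Your proposal is correct and follows essentially the same route as the paper: fix a (possibly singular) conjugate heat kernel, use the Type I normalization $Q_i\equiv|t_i|^{-1}$ so that monotonicity and the lower bound on $\W_{p_0,0}$ force the entropy increments over the fixed window $[-(1+\epsilon),-(1-\epsilon)]$ to vanish, then pass the vanishing of the $L^2$ soliton quantity to the limit via the smooth convergence from Lemma \ref{space-time-extension} and parabolic estimates for $v_i$, $f_i$. Your remark that one should work with conjugate heat kernels rather than the possibly non-unique minimizers $f_{g}$ is exactly the choice the paper makes.
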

\begin{proof}
This proposition is but an application of Perelman's monotonicity formula. Because of Proposition \ref{typeI}, by passing to a subsequence and rescaling, we may, without loss of generality, assume $Q_i\equiv |t_i|^{-1}$. Note that the lower bound in (\ref{eq:typeI}) is automatically true in the case of Theorem \ref{Thm_dichotomy}(1). We shall first of all consider the ancient case, namely, the case of Theorem \ref{Thm_backwardrigidity} and Theorem \ref{Thm_dichotomy}(1). Let $v(\cdot,t):=(4\pi|t|)^{-\frac{n}{2}}e^{-f(\cdot,t)}$ be the conjugate heat kernel based at a fixed point $(p_0,0)$, and define 
\begin{eqnarray*}
g_{i,t}&:=&|t_i|^{-1}g_{|t_i|t},
\\
f_i(\cdot,t)&:=&f(\cdot,|t_i|t),
\\
v_i(\cdot,t)&:=&(4\pi|t|)^{-\frac{n}{2}}e^{-f_i(\cdot,t)}=|t_i|^{\frac{n}{2}}v(\cdot,|t_i|t).
\end{eqnarray*}
Then, by Perelman's monotonicity formula (\ref{monotonicityofentropy}), we have
\begin{align}\label{facundia_1}
& \W_{p_0,0}\big((1+\epsilon)|t_i|\big)-\W_{p_0,0}\big((1-\epsilon)|t_i|\big)
\\\nonumber
=& -2\int_{-(1+\epsilon)}^{-(1-\epsilon)}\int_M|t|\left|\Ric_{g_{i,t}}+\nabla^2_{g_{i,t}}f_i(\cdot,t)-\frac{1}{2|t|}g_{i,t}\right|^2v_i\,dg_{i,t}dt,
\end{align}
where $\epsilon$ is as provided by Lemma \ref{space-time-extension}. Since $\W_{p_0}(\tau)$ is decreasing in $\tau$ and since $\lim_{\tau\to\infty}\W_{p_0,0}(\tau)=\lim_{\tau\to\infty}\N_{p_0,0}(\tau)\geq -Y$ (\cite[Proposition 4.6]{MZ21}), we have
\begin{equation}\label{facundia_2}
   \lim_{i\to\infty}\Big(\W_{p_0,0}\big((1+\epsilon)|t_i|\big)-\W_{p_0,0}\big((1-\epsilon)|t_i|\big)\Big)=0. 
\end{equation}
On the other hand, Lemma \ref{space-time-extension} and the standard regularity theorem of parabolic PDE provide uniform smooth estimates for $v_i$ and $f_i$ on $M^n\times[-(1+\epsilon/2),-(1-\epsilon/2)]$. 
Hence we have the following smooth convergence
$$v_i\to v_\infty, \quad f_i\to f_\infty \quad\text{ on }\quad M^n\times
[-(1+\epsilon/2),-(1-\epsilon/2)],
$$
where $v_\infty=(4\pi|t|)^{-\frac{n}{2}}e^{-f_\infty}$ is positive and integrates to 1. By (\ref{facundia_1}) and (\ref{facundia_2}), we have
$$\Ric_{g_{\infty,t}}+\nabla^2_{g_{\infty,t}}f_\infty(\cdot,t)=\frac{1}{2|t|}g_{\infty,t}\quad\text{ for all }\quad t\in 
[-(1+\epsilon/2),-(1-\epsilon/2)].
$$
This finishes the proof in the ancient case.

In the case of Theorem \ref{Thm_forwardrigidity}, one needs only to replace $v$ with a singular conjugate heat kernel based at $t=0$ (see Section 2.1), and the rest of the arguments are the same. The details are left to the readers.
\end{proof}

Now we are ready to consider the Type II case in Theorem \ref{Thm_dichotomy}.

\begin{Proposition}\label{Ricciflat}
Under the assumptions of Theorem \ref{Thm_dichotomy}(2), $g_o$ is a Ricci flat metric.
\end{Proposition}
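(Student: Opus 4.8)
The plan is first to show that $\inf_M R_{g_o}=0$, and then to upgrade this to $\Ric_{g_o}\equiv 0$ using the Ricci flow supplied by Lemma \ref{space-time-extension} together with the strong maximum principle for scalar curvature. The mechanism is that, although Perelman's entropy monotonicity along the ancient flow becomes vacuous under a Type II blow-down, the bound $\int_M R_{g_s}\,d\nu_{p_0,0\,|\,s}\le \tfrac{n}{2|s|}$ (valid for $s<0$) is scale invariant, and the hypothesis $Q_i|t_i|\to\infty$ is exactly what makes its rescaled right-hand side vanish.

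First I would fix $p_0\in M$ and the conjugate heat kernel $(\nu_{p_0,0\,|\,s})_{s<0}$ based at $(p_0,0)$ (a genuine one, as the ancient flow is smooth up to $t=0$), with $d\nu_{p_0,0\,|\,s}=(4\pi|s|)^{-\frac n2}e^{-f(\cdot,s)}\,dg_s$. From the definition of $\W_{p_0,0}$, the identity $\int_M f(\cdot,s)\,d\nu_{p_0,0\,|\,s}=\N_{p_0,0}(-s)+\tfrac n2$, and the standard relation $\tfrac{d}{d\tau}\big(\tau\,\N_{p_0,0}(\tau)\big)=\W_{p_0,0}(\tau)$, one computes $\N_{p_0,0}'(\tau)=\int_M\big(|\nabla f|^2+R\big)\,d\nu_{p_0,0\,|\,-\tau}-\tfrac{n}{2\tau}$; since $\N_{p_0,0}$ is nonincreasing by (\ref{monotonicityofentropy}), this yields $\int_M R_{g_s}\,d\nu_{p_0,0\,|\,s}\le \tfrac{n}{2|s|}$ for all $s<0$. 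Because $R_{g_s}\ge 0$ on a closed ancient Ricci flow and $\nu_{p_0,0\,|\,t_i}$ is a probability measure,
$$0\ \le\ \inf_M R_{Q_ig_{t_i}}\ =\ Q_i^{-1}\inf_M R_{g_{t_i}}\ \le\ Q_i^{-1}\int_M R_{g_{t_i}}\,d\nu_{p_0,0\,|\,t_i}\ \le\ \frac{n}{2\,Q_i|t_i|}.$$
By the Type II hypothesis of Theorem \ref{Thm_dichotomy}(2) the right-hand side tends to $0$, so $\inf_M R_{Q_ig_{t_i}}\to 0$; since the infimum of scalar curvature is diffeomorphism invariant and $C^2$-continuous, (\ref{C2convergence2}) forces $\inf_M R_{g_o}=0$, and likewise $R_{g_o}\ge 0$.

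To finish, pass to the subsequence of Lemma \ref{space-time-extension}, so that $(M,Q_ig_{t_i+Q_i^{-1}t})_{t\in[-\epsilon,\epsilon]}$ converges smoothly to a Ricci flow $(M,g_{\infty,t})_{t\in[-\epsilon,\epsilon]}$ with $g_{\infty,0}=g_o$; as above, $R_{g_{\infty,t}}\ge 0$ for every $t$. Then $R_{g_{\infty,\cdot}}$ is a nonnegative solution of $(\partial_t-\Delta)R=2|\Ric|^2\ge 0$ on $M\times[-\epsilon,0]$ attaining its minimum value $0$ at a point of the final time-slice $\{t=0\}$; by the strong maximum principle $R_{g_{\infty,t}}\equiv 0$ on $M\times[-\epsilon,0]$, whence $|\Ric_{g_{\infty,t}}|^2=\tfrac12(\partial_t-\Delta)R_{g_{\infty,t}}\equiv 0$ there. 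In particular $\Ric_{g_o}=\Ric_{g_{\infty,0}}=0$, which is the assertion.

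The only step where the Type II condition is used is the limit $\tfrac{n}{2Q_i|t_i|}\to 0$; the subtle point is the choice of monotone quantity. One must work with $\int_M R\,d\nu$, whose natural bound is scale invariant and therefore survives the blow-down (whose scale $Q_i|t_i|$ diverges), rather than with the $\nu$-functional (scale invariant, but degenerating to $-\infty$ along the collapsing ancient flow) or the $\lambda$-functional (scale covariant, degenerating to $0$ under the blow-down). Everything else — the relations between $\W$ and $\N$, the monotonicity (\ref{monotonicityofentropy}), nonnegativity of $R$ for closed ancient flows, and the strong maximum principle for scalar curvature — is standard.
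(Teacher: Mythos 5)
Your proof is correct, and it takes a genuinely different route to the same conclusion. Both arguments invoke Lemma \ref{space-time-extension} to obtain the short-time limit flow $g_{\infty,t}$, use Chen's nonnegativity of $R$ on ancient closed flows, and finish with the strong maximum principle applied to $(\partial_t-\Delta)R=2|\Ric|^2$. Where they differ is in showing that $R_{g_o}$ vanishes somewhere. The paper argues by contradiction: if $R_{g_o}>c_0>0$ everywhere, a uniform Sobolev inequality passes to $(M,Q_ig_{t_i})$, Proposition \ref{lowerbound of nu} then bounds $\nu(g_{t_i})$ from below, hence (\ref{Nash_bound}) holds, and Proposition \ref{typeI} forces $Q_i|t_i|$ to be bounded, contradicting the Type II assumption. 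You instead derive the identity $\N'_{p_0,0}(\tau)=\int_M(|\nabla f|^2+R)\,d\nu_{p_0,0\,|\,-\tau}-\tfrac{n}{2\tau}$ from $(\tau\N)'=\W$, use $\N'\le 0$ (equivalently $\W\le\N$) to get the scale-invariant bound $\int_M R_{g_s}\,d\nu_{p_0,0\,|\,s}\le\tfrac{n}{2|s|}$, and then read off $\inf_M R_{Q_ig_{t_i}}\le\tfrac{n}{2Q_i|t_i|}\to 0$ directly from $Q_i|t_i|\to\infty$, with no reference to Sobolev inequalities, Proposition \ref{lowerbound of nu}, or Proposition \ref{typeI}. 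Your version is shorter and more self-contained; the paper's version is arguably more in keeping with the Nash-entropy/noncollapsing machinery already set up in Section 3, and it simultaneously re-verifies the logical link between (\ref{Nash_bound}) and the Type I bound that underlies the dichotomy. Both arguments are sound; yours isolates precisely the scale-invariant quantity that survives the Type II blow-down, which is a clean way to see why case (2) forces Ricci flatness.
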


\begin{proof}
Let us consider the Ricci flow $g_{\infty,t}$ given by Lemma \ref{space-time-extension}. Recall that $g_o=g_{\infty,0}$. Since in the ancient case, every scaled Ricci flow $g_{i,t}:=Q_ig_{t_i+Q_i^{-1}t}$ in the statement of Lemma \ref{space-time-extension} has nonnegative scalar curvature (c.f. \cite{CBl09}), we have that $g_{\infty,t}$ also has nonnegative scalar curvature. By the strong maximum principle and the curvature evolution equation
$$\frac{\partial}{\partial t} R_{g_{\infty,t}}=\Delta R_{g_{\infty,t}}+2|{\Ric_{g_{\infty,t}}}|^2,$$
we have that $R_{g_{\infty,t}}$ is either positive everywhere or $0$ everywhere, and in the latter case $g_{\infty,t}$ (and hence $g_o$) must be a static Ricci flat metric; we need only to show that the former case does not happen.

Let us assume that $R_{g_o}>c_0>0$ everywhere. Since $(M^n,g_o)$ is closed, it must also satisfy an $L^2$ Sobolev inequality 
\begin{equation*}
    \left(\int_M|u|^{\frac{2n}{n-2}}dg_o\right)^{\frac{n-2}{n}}\leq A\int_M|\nabla u|^2dg_o+B\int_M u^2dg_o\quad\text{ for all }\quad u\in W^{1,2}(M),
\end{equation*}
where $A$ and $B$ are constants depending on the geometry of $(M,g_o)$.

On the other hand, since $(M^n,Q_ig_{t_i})\longrightarrow(M^n,g_o)$ in the smooth sense, we have that for all $i$ large enough, $R_{Q_ig_{t_i}}>\frac{c_0}{2}>0$ everywhere on $M^n$, and $(M^n,Q_ig_{t_i})$ satisfies a Sobolev inequality with constants $2A$ and $2B$. Hence, by Proposition \ref{lowerbound of nu}, we have
\begin{equation*}
    \nu(g_{t_i})=\nu(Q_ig_{t_i})\geq-C\quad\text{ for all $i$ large enough},
\end{equation*}
where $C=C(n,A,B,c_0)$ is independent of $i$. Then, fixing any $p_0\in M$, we have
\begin{eqnarray*}
\N_{p_0,0}(|t_i|)\geq \W_{p_0,0}(|t_i|)\geq \mu(g_{t_i},|t_i|)\geq\nu(g_{t_i})\geq -C,
\end{eqnarray*}
for all $i$ large enough. Therefore, (\ref{Nash_bound}) holds for $(M,g_t)$ and Proposition \ref{typeI} implies that $Q_i|t_i|$ is bounded from above; this contradicts our assumption of Theorem \ref{Thm_dichotomy}(2).
\end{proof}

\begin{proof}[Proof of Theorem \ref{Thm_dichotomy}]
By Proposition \ref{typeI} and Proposition \ref{reverse_typeI}, we have that under the assumption of Theorem \ref{Thm_dichotomy}, the boundedness of $Q_i|t_i|$ is equivalent to (\ref{Nash_bound}). 

Under the assumption of Theorem \ref{Thm_dichotomy}(2), we have that $(M^n,g_o)$ is a closed Ricci flat manifold, and hence is not strongly noncollapsed. It follows that $(M,g_t)$ is not strongly noncollapsed either, since it sequentially converges to $(M,g_o)$.

Under the assumption of Theorem \ref{Thm_dichotomy}(1), we are in the exactly same setting as Theorem \ref{Thm_backwardrigidity}. Hence the remaining part of proof is reduced to the proof of Theorem \ref{Thm_backwardrigidity}.
\end{proof}

\section{Uniqueness of the Type I scaled limit}

In this section, we prove the uniqueness of the tangent flows at infinity and at the finite singular time, i.e., Theorem \ref{Thm_main} and \ref{Thm_main_1}. We will also estimate the convergence rate of the (backward) modified Ricci flow relative to the H\"older norms and prove Corollary \ref{Coro_main} and Corollary \ref{Coro_main_1}.

\subsection{Uniqueness of tangent flow at infinity}

Before we proceed with our argument, we remind the reader that by  Proposition \ref{prop: limit is shrinker}, $g_o$ in (\ref{C2convergence0}) is already known to be a shrinker metric, and we will let $f_o$ be its potential function. Then, Theorem \ref{Thm_main} and Corollary \ref{Coro_main} are implied by the following proposition.
\begin{Proposition}\label{Pro_13}
Let $(M^n,g_t)_{t\in (-\infty,0]}$ be an ancient solution satisfying the conditions in the statements of Theorem \ref{Thm_backwardrigidity}, then $(M,g_t)$ is a Type I ancient Ricci flow, and Theorem \ref{Thm_main} and Corollary \ref{Coro_main} hold for $(M^n,g_t)$ with $\left(\XX^\infty,(\mu^\infty_t)_{t\in -(\infty,0)}\right)$ being the canonical form of $(M,g_o,f_o)$, where $\mu^\infty_t$ is the conjugate heat flow constructed using $f_o$.
\end{Proposition}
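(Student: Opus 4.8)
The plan is to reduce the statement to the \L ojasiewicz machinery of Sun--Wang (Theorem~\ref{Lojaciewicz} and Theorem~\ref{thm:backward convergence}), applied to a modified Ricci flow extracted from $(M,g_t)$ after a logarithmic change of the time variable. First, by Proposition~\ref{typeI} the scaling factors are of Type I, $c\le|t_i|Q_i\le C$, so, following the reduction already carried out in Proposition~\ref{prop: limit is shrinker}, after passing to a subsequence we may assume $Q_i=|t_i|^{-1}$; then Lemma~\ref{space-time-extension} together with interior parabolic regularity upgrades \eqref{C2convergence0} to smooth Cheeger--Gromov convergence $(M,|t_i|^{-1}g_{t_i})\to(M,g_o)$, and by Proposition~\ref{prop: limit is shrinker} $g_o$ is a shrinker metric, which we normalize as in \eqref{shrinker_normalization} and whose potential we call $f_o$. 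Introduce the rescaled Ricci flow $\widehat{g}_s:=|t|^{-1}g_t$ with $t=-e^{-s}$, $s\in\mathbb R$, solving $\partial_s\widehat{g}_s=-2\big(\Ric_{\widehat{g}_s}-\tfrac12\widehat{g}_s\big)$, so that $|t|\max_M|\Rm_{g_t}|=\max_M|\Rm_{\widehat{g}_s}|$. Writing $s_i:=-\log|t_i|\searrow-\infty$, the metric $\widehat{g}_{s_i}$ is $C^{k,\gamma}_{g_o}$-close to $g_o$ modulo diffeomorphism; pulling $\widehat{g}_s$ back by the family of diffeomorphisms generated by $-\nabla f_{(\cdot)}$ turns it, for $s$ near each $s_i$, into a modified Ricci flow $g_s=\varphi_s^*\widehat{g}_s$, and then $h_u:=g_{-u}$ solves the backward modified Ricci flow $\partial_u h_u=2\big(\Ric_{h_u}+\nabla^2 f_{h_u}-\tfrac12 h_u\big)$ of Theorem~\ref{thm:backward convergence}, the regime $u\to+\infty$ corresponding to $t\to-\infty$.

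Since the backward modified flow is the negative gradient flow of $\mu_{(\cdot)}$, the quantity $\mu_{h_u}$ is nonincreasing in $u$, equivalently $\mu_{\widehat{g}_s}$ is nondecreasing in $s$. Continuity of $\mu(\cdot,1)$ under smooth convergence gives $\mu_{\widehat{g}_{s_i}}\to\mu_{g_o}$, so by monotonicity $\mu_{\widehat{g}_s}\ge\mu_{g_o}$ for every $s$ (and $\mu_{\widehat{g}_s}\to\mu_{g_o}$ as $s\to-\infty$); in particular $\mu_{h_u}\ge\mu_{g_o}$ throughout. For $i$ large, $h_{u_i}\in\V^{k,\gamma}_\delta\subset\mathcal U$ where $u_i:=-s_i$, and since $(M,g_t)$ is ancient the backward modified flow started at $h_{u_i}$ can fail to exist only by leaving $\mathcal U$; hence Theorem~\ref{thm:backward convergence} applies, forces $h_u\in\V^{k,\gamma}_\varepsilon\subset\mathcal U$ for all $u\ge u_i$ (so the flow exists for all $u\ge u_i$), and produces a limit shrinker $h_\infty\in\mathcal U$ with $\mu_{h_\infty}=\mu_{g_o}$ and $\|h_\infty-h_u\|_{C^{k,\gamma}_{g_o}}\le C(u-u_i)^{-\beta}$ for $u\ge u_i+1$. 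Comparing with the subsequential smooth Cheeger--Gromov limit $g_o$ of $\{h_{u_j}\}$ shows $h_\infty$ is isometric to $g_o$, so after relabeling we take $h_\infty=g_o$. Since $u=\log(-t)$, putting $\phi_t:=\varphi_{-\log(-t)}$ gives $\tfrac1{|t|}\phi_t^*g_t=h_{\log(-t)}$, hence $\big\|\tfrac1{|t|}\phi_t^*g_t-g_o\big\|_{C^{k,\gamma}_{g_o}}\le C(\log(-t))^{-\beta}$, which is Corollary~\ref{Coro_main}. Moreover $|t|\max_M|\Rm_{g_t}|=\max_M|\Rm_{\widehat{g}_s}|\to\max_M|\Rm_{g_o}|<\infty$ as $t\to-\infty$, and since $g_t$ is smooth near $t=0$ this shows $(M,g_t)$ is a Type I ancient solution.

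It remains to promote this to $\mathbb F$-convergence and deduce Theorem~\ref{Thm_main}. The convergence $\widehat{g}_s\to g_o$ modulo diffeomorphism as $s\to-\infty$, together with Hamilton's compactness theorem, shows that for every sequence $\tau_i'\nearrow\infty$ the rescaled Ricci flows $(M,\tau_i'^{-1}g_{\tau_i't})_{t<0}$ converge smoothly to the canonical form $(\XX^\infty,(\overline{g}_t)_{t<0})$ of $(M,g_o,f_o)$, a smooth Ricci flow on the closed manifold $M$ that is manifestly independent of $\{\tau_i'\}$. By \cite[Theorem 2.5]{Bam20c} the associated $\mathbb F$-convergence of metric flow pairs is then smooth, and the only remaining point is to identify the limiting conjugate heat flow: under $\tau_i'\nearrow\infty$ a fixed base point $(p_1,t_1)$ is pushed into the extinction time $t=0$ of $\XX^\infty$, and a conjugate heat flow on the canonical form of a \emph{closed} shrinker that is based at its extinction time is unique, equal to the soliton flow $\mu_t^\infty$ constructed from $f_o$; this is already implicit in Proposition~\ref{prop: limit is shrinker} for the original data and is the content (in the smooth case) of \cite{CMZ21}. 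This yields the $\mathbb F$-convergence in the conclusion of Theorem~\ref{Thm_main} with limit $(\XX^\infty,(\mu_t^\infty)_{t<0})$, and the proposition follows.

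The step I expect to be routine is the invocation of the \L ojasiewicz inequality itself (it is packaged as Theorem~\ref{thm:backward convergence}); the real difficulties are the bookkeeping around the reparametrizing diffeomorphisms --- ensuring the modified flow $g_s$ is genuinely defined on a full half-line of logarithmic time and identifying its limit with $g_o$ rather than merely an isometric copy --- and, above all, upgrading the smooth Cheeger--Gromov--Hamilton convergence of the rescaled Ricci flows to $\mathbb F$-convergence of metric flow pairs with the correct (soliton) limiting measures, uniformly over the base point.
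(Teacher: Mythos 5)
Your proposal follows the paper's own proof essentially step for step: Type~I normalization of the scaling factors via Proposition~\ref{typeI}, a logarithmic time change to a backward normalized Ricci flow, gauge-fixing by the gradient of $f_{(\cdot)}$ to produce a backward modified Ricci flow, application of the Sun--Wang theorem (Theorem~\ref{thm:backward convergence}) to get polynomial convergence to a shrinker that is then identified with $g_o$ modulo diffeomorphism (Lemma~\ref{differbydiffeo}), and finally the upgrade of smooth Cheeger--Gromov--Hamilton convergence to $\mathbb F$-convergence. So the route is the same; the issues are in two of your justifications.

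First, you derive the hypothesis $\mu_{h_u}\ge \mu_{g_o}$ of Theorem~\ref{thm:backward convergence} from the statement that $\mu$ is monotone along the backward modified flow. That monotonicity is only available on the existence interval $[u_{i_0},T)$ of the modified flow --- whose global existence is exactly what is in question --- and if $T<\infty$ it cannot be combined with the subsequential limit $\mu_{h_{u_i}}\to\mu_{g_o}$ along $u_i\to\infty$, since those times need not lie in $[u_{i_0},T)$. The paper's fix is to observe that, by diffeomorphism invariance, $\mu_{h_u}=\mu(g_t,|t|)$, which is monotone along the \emph{unmodified} Ricci flow by Perelman's monotonicity, so $\mu(g_t,|t|)\ge\lim_i\mu(g_{t_i},|t_i|)=\mu(g_o,1)$ for every $t$ with no circularity. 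Second, your assertion that the modified flow ``can fail to exist only by leaving $\mathcal U$'' is precisely the content of the paper's Lemma~\ref{longtime} and is not automatic: one needs a uniform-in-$\bar s$ extension time, obtained by applying backward pseudolocality (Theorem~\ref{thm: backward pseudo}) and Shi's estimates to the reparametrized flow $\psi_{\bar s}^*\tilde g_s$, using that $\bar g_{\bar s}\in\mathcal V^{k,\gamma}_{\varepsilon}$ is uniformly close to $g_o$. Finally, two smaller elisions: identifying the full limit of the rescaled flows $(M,\tau_i'^{-1}g_{\tau_i' t})$ with the canonical form of $(M,g_o,f_o)$ requires forward \emph{and} backward uniqueness of the Ricci flow from the $t=-1$ slice (the paper cites Hamilton, Chen--Zhu, Kotschwar), and the limiting conjugate heat flow is identified by rerunning the entropy argument of Proposition~\ref{prop: limit is shrinker} and invoking uniqueness of the normalized potential on a compact shrinker, not by an abstract uniqueness of conjugate heat flows ``based at the extinction time.'' None of these is a conceptual obstacle, but the first two are genuine gaps in the argument as written.
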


\begin{proof}[Proof of Theorem \ref{Thm_main} and Corollary \ref{Coro_main} assuming Proposition \ref{Pro_13}]
Assume that (\ref{the_F_convergence}) holds with $\XX^\infty$ being a metric flow induced by a smooth and closed Ricci flow. Then, by \cite[Theorem 2.5]{Bam20c}, we have that the $\mathbb{F}$-convergence is smooth everywhere. In particular, we have
\begin{eqnarray}\label{C2convergenceinappearance}
\left(M^n,\tau_i^{-1}g_{-\tau_i}\right)\longrightarrow \XX^\infty_{-1}
\end{eqnarray}
in the smooth Cheeger-Gromov sense. As a consequence, we necessarily have that $\XX^\infty_{-1}$ is a smooth Riemannian manifold with the underlying manifold diffeomorphic to $M^n$.  Therefore, (\ref{C2convergenceinappearance}) is the same as (\ref{C2convergence0}), and Theorem \ref{Thm_main} and Corollary \ref{Coro_main} are reduced to Proposition \ref{Pro_13}.
\end{proof}

The rest of this subsection consists of the proof of Proposition \ref{Pro_13}. By Proposition \ref{typeI}, after scaling and passing to a subsequence, we may, without loss of generality, assume that $Q_i|t_i|\equiv 1$ for all $i\in\mathbb{N}$. Let us consider the following dynamic scaling
\begin{eqnarray}\label{defofnormalizedbackwardflow}
\tilde g_s:=e^{-s}g_{-e^s}\quad\text{ for all }\quad s\in[0,\infty).
\end{eqnarray}
Then
\begin{equation}\label{normalizedbackwardflow}
  \frac{\partial}{\partial s} \tg_s
=  2\left(\Ric_{\tg_s}- \tfrac{1}{2}\tg_s\right).  
\end{equation}

$\tg_s$ is not yet a backward modified Ricci flow, but we can potentially construct a backward modified Ricci flow using $\tg_s$ by pulling it back using a flow generated by $\nabla f_{\tg_s}$, where $f_g$ is the minimizer of $\W(g,\cdot,1)$. However, unless we know a priori that $\tg_s$ is already in a regular neighborhood of $g_o$, we cannot guarantee the existence of such a flow, since $f_g$ is not necessarily unique and the map $P:g\to f_g$ is not necessarily smooth (see Section 2.3).

\begin{Lemma}
Let $\varepsilon$ and $\delta$ be small enough such that Theorem \ref{thm:backward convergence} holds for $g_o$. Let $s_i=\log(-t_i)\nearrow\infty$. Then for each $i$ large enough, there is a self-diffeomorphism $\psi_i:M\to M$, such that
$\psi_i^* \tg_{s_i}\in\mathcal V^{k,\gamma}_{\delta}.$
\end{Lemma}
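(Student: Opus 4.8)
The plan is to use the $C^2$ Cheeger–Gromov convergence $(M,Q_ig_{t_i})\to(M,g_o)$ together with the smoothing provided by Lemma \ref{space-time-extension}, and then upgrade this to the statement about the dynamically rescaled metric $\tg_{s_i}$. Recall that with the normalization $Q_i|t_i|\equiv 1$ we have $Q_ig_{t_i}=|t_i|^{-1}g_{t_i}=e^{-s_i}g_{-e^{s_i}}=\tg_{s_i}$, so the hypothesis (\ref{C2convergence0}) already says precisely that $\tg_{s_i}\to g_o$ in the $C^2$ Cheeger–Gromov sense. By definition of Cheeger–Gromov convergence there exist diffeomorphisms $\psi_i:M\to M$ such that $\psi_i^*\tg_{s_i}\to g_o$ in $C^2_{g_o}$ on $M$; the only thing missing is that $C^2$-smallness is not enough to land in the $C^{k,\gamma}$ ball $\mathcal V^{k,\gamma}_\delta$ for the large $k$ required by Theorem \ref{thm:backward convergence}. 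The key point is that the $C^2$ convergence can be boosted to $C^\infty$ convergence.

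First I would invoke Lemma \ref{space-time-extension}: after passing to a subsequence, the rescaled flows $(M,Q_ig_{t_i+Q_i^{-1}t})_{t\in[-\epsilon,\epsilon]}$ converge to a smooth Ricci flow $(M,g_{\infty,t})_{t\in[-\epsilon,\epsilon]}$ with $g_{\infty,0}=g_o$, and in particular $g_o$ is smooth and the convergence at $t=0$ is in $C^\infty_{\mathrm{loc}}$ (Hamilton's compactness theorem gives smooth convergence, not merely $C^2$). Concretely this means there are diffeomorphisms $\psi_i:M\to M$ with $\psi_i^*(Q_ig_{t_i})=\psi_i^*\tg_{s_i}\to g_o$ in $C^\infty_{g_o}(M)$. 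Hence for every $k,\gamma$ and every $\delta>0$, we have $\|\psi_i^*\tg_{s_i}-g_o\|_{C^{k,\gamma}_{g_o}}<\delta$ for all $i$ sufficiently large. Choosing $k$ and $\gamma$ as in Theorem \ref{thm:backward convergence} and $\delta$ as in the statement of the lemma, this gives $\psi_i^*\tg_{s_i}\in\mathcal V^{k,\gamma}_\delta$ for all large $i$, and since $\mathcal V^{k,\gamma}_\delta\subset\mathcal U$ for the regular neighborhood $\mathcal U$ (by the choice of $\delta<\varepsilon$ small), the conclusion follows.

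There is essentially no serious obstacle here; the lemma is a bookkeeping step that transfers the hypothesis into the form needed to apply the Sun–Wang Łojasiewicz machinery. The one point that deserves care is making sure the subsequence chosen in Lemma \ref{space-time-extension} is compatible with the sequence $s_i=\log(-t_i)$ appearing in the lemma — but since we are always free to pass to a further subsequence (and the eventual uniqueness statement will show the limit is independent of the subsequence anyway), this causes no difficulty. One should also note that the diffeomorphisms $\psi_i$ are a priori only continuous in $i$, not smoothly varying, but the lemma only asserts existence of each individual $\psi_i$, so this is not an issue at this stage; the smoothly-evolving diffeomorphism $\phi_t$ of Corollary \ref{Coro_main} will be produced later by running the backward modified Ricci flow from $\psi_i^*\tg_{s_i}$.
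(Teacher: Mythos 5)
Your proof is correct and takes essentially the same route as the paper: the paper also just observes that, after normalizing $Q_i|t_i|\equiv 1$, the $C^2$ convergence of $\tg_{s_i}=|t_i|^{-1}g_{t_i}$ to $g_o$ is in fact smooth Cheeger--Gromov convergence (a consequence of the pseudolocality/Shi bootstrap underlying Lemma \ref{space-time-extension}, cited via Proposition \ref{prop: limit is shrinker}), and then reads off the claim from the definition of Cheeger--Gromov convergence. Your aside about the subsequence is harmless here since the subsequent argument only needs a single $i_0$ for which the lemma holds, so nothing is lost.
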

\begin{proof}
By Proposition \ref{prop: limit is shrinker}, we have that 
\begin{equation}\label{SOMEC-G}
    (M^n,\tg_{s_i})\equiv (M^n,|t_i|^{-1}g_{t_i})\longrightarrow (M,g_o)
\end{equation}
in the smooth Cheeger-Gromov sense, and the lemma follows from the definition of Cheeger-Gromov convergence.
\end{proof}

Let us fix an $i_0$ large enough, such that the above Lemma holds for $i=i_0$. Without loss of generality, we shall assume $s_{i_0}=0$ and $\psi_{i_0}=\operatorname{id}$. Then, arguing in the same way as the proof of Lemma \ref{shorttimestability} by using Theorem \ref{thm: backward pseudo}, we have that $$\tg_s\in\mathcal V^{k,\gamma}_{2\delta}\subset \mathcal U\quad\text{ for all }\quad s\in[0,\eta_0],$$
where $\eta_0$ is a small positive number 
depending on $\delta$ and the geometry of $g_o$, 
and $\mathcal U$ is a regular neighborhood of $g_o$. Since $P:g\to f_g$ is analytic on $\mathcal U$, we have that the flow 
\begin{eqnarray}\label{modifying}
\left\{\begin{array}{rl}
     \bg_s&=\ \psi_s^*\tg_s  \\
     \frac{\partial}{\partial s}\psi_s &=\ \nabla_{\tg_s}f_{\tg_s}\circ  \psi_s,\\
     \psi_0 &=\ \operatorname{id},
\end{array}\right.
\end{eqnarray}
is well-defined at least for all $s\in[0,\eta_0]$. Let $[0,T)$ be the maximum interval of existence for the flow $\bg_s$. Obviously, as long as $\bg$ exists, it is a backward modified Ricci flow.

\begin{Lemma}\label{longtime}
$T=\infty$.
\end{Lemma}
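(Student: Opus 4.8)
The plan is to show that the backward modified Ricci flow $\bg_s$ defined by (\ref{modifying}) cannot stop in finite time, by combining the \L ojasiewicz-type stability estimate of Theorem \ref{thm:backward convergence} with the fact that $\tg_s$ (and hence $\bg_s$, which differs only by a diffeomorphism) returns to an arbitrarily small neighborhood of $g_o$ along the subsequence $s_i = \log(-t_i) \nearrow \infty$. First I would record the monotonicity we need: since $\bg_s = \psi_s^* \tg_s$ is a backward modified Ricci flow, $\mu_{\bg_s}$ is monotonically decreasing in $s$; moreover since $\mu$ is diffeomorphism-invariant, $\mu_{\bg_s} = \mu_{\tg_s}$, and by Proposition \ref{prop: limit is shrinker} together with the smooth Cheeger-Gromov convergence $\tg_{s_i} \to g_o$ we get $\mu_{\tg_{s_i}} \to \mu_{g_o}$, whence $\mu_{\bg_s} \ge \mu_{g_o}$ for all $s \in [0,T)$. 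This verifies the hypothesis $\mu_{g_s} \ge \mu_{g_o}$ required by Theorem \ref{thm:backward convergence}.

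Next I would argue by contradiction: suppose $T < \infty$. The idea is that if $\bg_s$ stays in the regular neighborhood $\mathcal U$ up to time $T$, then since the modified flow is a reparametrized Ricci flow with bounded geometry near $g_o$, one could extend it past $T$ (using short-time existence of the modified flow once we are inside a regular neighborhood, as in the construction leading to (\ref{modifying})), contradicting maximality. So $\bg_s$ must exit $\mathcal V^{k,\gamma}_\varepsilon$ before time $T$. But here is where Theorem \ref{thm:backward convergence} bites: choose $i$ large enough that $s_i$ is close to $T$ from below — wait, this needs care, since $s_i \nearrow \infty$ while $T < \infty$, so in fact $s_i > T$ for all large $i$, which already is the contradiction once we know $\bg_s$, equivalently $\tg_s$, is defined and controlled up to and including such $s_i$. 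More precisely: by the preceding Lemma, for each large $i$ there is a diffeomorphism $\psi_i$ with $\psi_i^* \tg_{s_i} \in \mathcal V^{k,\gamma}_\delta \subset \mathcal U$; running the modified-flow construction (\ref{modifying}) with initial data $\psi_i^*\tg_{s_i}$ at time $s_i$, Theorem \ref{thm:backward convergence} (with $\varepsilon$ chosen first, then $\delta$) guarantees this modified flow exists and stays in $\mathcal V^{k,\gamma}_\varepsilon$ on $[s_i, \infty)$. Since the modified flow is just $\tg$ composed with diffeomorphisms and time-reparametrization, and such flows agree on overlaps up to diffeomorphism by uniqueness of the Ricci flow, we conclude that $\bg_s$ (suitably identified) extends to all of $[0,\infty)$.

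The cleanest way to organize this, which I expect to be the main technical obstacle, is the gluing/uniqueness step: one must check that the backward modified Ricci flow started at $s = 0$ and the one started at $s = s_i$ (for $i$ large) are the same flow up to a time-independent diffeomorphism, so that the a priori bound obtained from the later starting time can be pulled back to control the earlier flow and show it never leaves $\mathcal U$. This rests on the fact, recalled in Section 2.2, that a modified Ricci flow is a Ricci flow composed with a one-parameter family of diffeomorphisms and a time-scaling, together with the uniqueness of Ricci flow on a closed manifold (Hamilton); the diffeomorphisms are generated by $\nabla f_g$, which is well-defined and depends analytically on $g$ throughout $\mathcal U$. Once this identification is in place, maximality of $[0,T)$ is contradicted because the flow is shown to stay in $\mathcal U$ — where short-time existence always holds — on a time interval strictly containing $[0,T)$. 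Hence $T = \infty$, and in fact the same argument simultaneously yields that $\bg_s$ converges in $C^{k,\gamma}_{g_o}$ at a polynomial rate to a shrinker metric, which will be used in the sequel to identify the tangent flow.
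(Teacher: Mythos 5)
Your first step is correct and matches the paper: using diffeomorphism invariance of $\mu$, Perelman's monotonicity, and the smooth convergence $\tg_{s_i}\to g_o$, one indeed gets $\mu_{\bg_s}\geq\mu_{g_o}$ for all $s\in[0,T)$, which is needed to invoke Theorem~\ref{thm:backward convergence}. However, the rest of the argument has a genuine gap. The central problem is that your ``gluing'' strategy requires an overlap that does not exist: if $T<\infty$, then for all large $i$ we have $s_i>T$, so the backward modified flow $\bg_s$ lives on $[0,T)$ while your newly-started flow (with initial data $\psi_i^*\tg_{s_i}$) would live on $[s_i,\infty)$ --- these two intervals are disjoint, with a gap $[T,s_i)$ that the uniqueness-of-Ricci-flow argument cannot bridge. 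Worse, you write that Theorem~\ref{thm:backward convergence} ``guarantees this modified flow exists and stays in $\mathcal V^{k,\gamma}_\varepsilon$ on $[s_i,\infty)$,'' but the theorem asserts no such long-time existence: it only says that \emph{given} a maximal existence interval $[0,T')$, the flow remains in $\V^{k,\gamma}_\varepsilon$ on $[0,T'-\varepsilon)$. Concluding $T'=\infty$ is precisely the content of the present lemma, so invoking it for the restarted flow is circular. Also, your intermediate claim ``so $\bg_s$ must exit $\mathcal V^{k,\gamma}_\varepsilon$ before time $T$'' is not justified and in fact contradicts what Theorem~\ref{thm:backward convergence} delivers.

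What is actually needed --- and what the paper supplies --- is a \emph{uniform} short-time extension estimate: for any $\bar s\in[0,T)$, since $\bg_{\bar s}\in\V^{k,\gamma}_\varepsilon$ by Theorem~\ref{thm:backward convergence}, one applies Bamler's backward pseudolocality (Theorem~\ref{thm: backward pseudo}) and Shi's estimates to the normalized backward Ricci flow $s\mapsto\psi_{\bar s}^*\tg_s$ with initial data $\bg_{\bar s}$, obtaining curvature and derivative bounds on an interval $[\bar s,\bar s+\eta]$ where $\eta>0$ depends only on $\varepsilon$ and $g_o$ (because everything about $\bg_{\bar s}$ can be estimated in terms of $g_o$), and is therefore \emph{independent of $\bar s$}. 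This gives $\psi_{\bar s}^*\tg_s\in\V^{k,\gamma}_{2\varepsilon}\subset\mathcal U$ for $s\in[\bar s,\bar s+\eta]$, so the ODE defining the modified-flow diffeomorphisms can be solved on this interval, and $\bg_s$ extends to $[0,\bar s+\eta]$. Taking $\bar s$ close enough to $T$ (so that $\bar s+\eta>T$) contradicts the maximality of $[0,T)$. Your first, abandoned idea (``if $\bg_s$ stays in $\mathcal U$ up to $T$, extend past $T$'') is the right one, but to make it rigorous you must justify why the flow can always be extended by a definite amount; the pseudolocality input, which you omit, is the crucial step.
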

\begin{proof}
First of all, we show that $\mu_{\bg_s}\geq\mu_{g_o}$ for all $s\in[0,T)$. Fixing an arbitrary $s\in[0,T)$ and setting $t=-e^s$, we have
$$\mu_{\bg_s}=\mu_{\tg_s}=\mu(g_t,|t|).$$
On the other hand, by (\ref{SOMEC-G}) we have
$$\lim_{i\to\infty}\mu(g_{t_i},|t_i|)=\lim_{i\to\infty}\mu(|t_i|^{-1}g_{t_i},1)=\mu(g_o,1)=\mu_{g_o}.$$
While by the monotonicity of Perelman's $\mu$ functional, we have $$\mu(g_t,|t|)\geq \lim_{i\to\infty}\mu(g_{t_i},|t_i|)\quad\text{ for all }\quad t<0.$$
This shows that $\mu_{\bg_s}\geq\mu_{g_o}$ for all $s\in[0,T)$.

By contradiction, we assume that $T<\infty$. By Theorem \ref{thm:backward convergence}, we have that $\bg_s\in\mathcal V_{\varepsilon}^{k,\gamma}$ for all $s\in[0,T)$. Here we would like to choose $\varepsilon$ small enough, such that $\mathcal V^{k,\gamma}_{2\varepsilon}\subset\mathcal U$. Arguing in the same way as in the proof of Lemma \ref{shorttimestability} again, we can find a $\eta>0$, independent of $\bar s\in[0,T)$, such that 
$$\psi_{\bar s}^*\tg_s\in\mathcal{V}^{k,\gamma}_{2\varepsilon}\subset\mathcal U\quad\text{ for all }\quad s\in[\bar s,\bar s+\eta].$$
Indeed, this follows from applying the backward Pseudolocality theorem (Theorem \ref{thm: backward pseudo}) to the flow $s\to\psi_{\bar s}\tg_s$, which is a backward normalized Ricci flow satisfying (\ref{normalizedbackwardflow}) with the initial data $\psi_{\bar s}\tg_{\bar s}=\bg_{\bar s}\in \mathcal V^{k,\gamma}_{\varepsilon}$. Furthermore, since $\overline g_{\bar s}$ is sufficiently close to $g_o$, we can estimate all of its geometric quantities in terms of $g_o$, and this is why $\eta$ is independent of $\bar s.$ We then have that
\begin{eqnarray*}
\left\{\begin{array}{rl}
     \bg_s&=\ \phi_s^*\psi^*_{\bar s}\tg_s,  \\
     \frac{\partial}{\partial s}\phi_s &=\ \nabla_{\psi^*_{\bar s}\tg_s}f_{\psi^*_{\bar s}\tg_s}\circ  \phi_s,\\
     \phi_{\bar s} &=\ \operatorname{id},
\end{array}\right.
\end{eqnarray*}
is well defined for $s\in[\bar s,\bar s+\eta]$. Since $\bar s\in[0,T)$ is arbitrary, we have that $\bg_s$ exists on $[0,T+\eta)$; this is a contradiction.
\end{proof}

Next, by Theorem \ref{thm:backward convergence}, we have that $\bg_s$ converges to a normalized Ricci shrinker $g_\infty\in\mathcal U$ with $\mu_{g_\infty}=\mu_{g_o}$ at the rate 
\begin{equation}\label{polynomialconvergence}
    \left\|\bg_s-g_\infty\right\|_{C^{k,\gamma}_{g_o}}\leq C s^{-\beta}\quad\text{ for all }\quad s\geq 1,
\end{equation}
where $C$ and $\beta$ are positive numbers.

\begin{Lemma}\label{differbydiffeo}
$g_\infty$ and $g_o$ differ only by a diffeomorphism, that is, there is a self-diffeomorphism $\psi_\infty:M^n\to M^n$, such that $\psi^*_\infty g_\infty=g_o$.
\end{Lemma}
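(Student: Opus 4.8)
The plan is to identify $g_\infty$ with the fixed shrinker $g_o$ up to a diffeomorphism by exploiting two facts already at our disposal: first, that $\bg_s$ is, at each time, isometric to $\tg_s \equiv |t|^{-1}g_{t}$ with $t = -e^s$ (via the diffeomorphism $\psi_s$), and second, that by Proposition \ref{prop: limit is shrinker} together with \eqref{SOMEC-G} we already know $(M^n, \tg_{s_i}) \to (M^n, g_o)$ in the smooth Cheeger--Gromov sense along the specific sequence $s_i = \log(-t_i)$. So the idea is to compare the two limits: $\bg_s \to g_\infty$ in the fixed $C^{k,\gamma}_{g_o}$ norm (hence Cheeger--Gromov), and $\tg_{s_i} \to g_o$ Cheeger--Gromov, and since $\bg_s$ and $\tg_s$ are isometric for every $s$, the two limit metrics must be isometric.

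\medskip

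In more detail, first I would recall that for each $s \in [0,\infty)$ we have $\bg_s = \psi_s^* \tg_s$, so $(M^n, \bg_s)$ and $(M^n, \tg_s)$ are isometric Riemannian manifolds, the isometry being $\psi_s$. By \eqref{polynomialconvergence}, $\bg_s \to g_\infty$ in $C^{k,\gamma}_{g_o}$, so in particular $(M^n, \bg_{s_i}) \to (M^n, g_\infty)$ in the smooth Cheeger--Gromov sense (the convergence is on the fixed manifold with fixed identity maps, which certainly gives Cheeger--Gromov convergence). On the other hand, \eqref{SOMEC-G} gives $(M^n, \tg_{s_i}) \to (M^n, g_o)$ in the smooth Cheeger--Gromov sense. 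Since $(M^n, \bg_{s_i})$ is isometric to $(M^n, \tg_{s_i})$ for every $i$, both sequences are "the same" sequence of abstract Riemannian manifolds, and hence their Cheeger--Gromov limits must be isometric: there is a diffeomorphism $\psi_\infty : M^n \to M^n$ with $\psi_\infty^* g_\infty = g_o$. The cleanest way to produce $\psi_\infty$ concretely is to write $\psi_\infty$ as a subsequential limit of the composition $\Phi_i^{-1} \circ \psi_{s_i} \circ \Psi_i$, where $\Psi_i$ (resp. $\Phi_i$) are the diffeomorphisms realizing the Cheeger--Gromov convergence $\tg_{s_i} \to g_o$ (resp. $\bg_{s_i} \to g_\infty$); one checks the composition is precompact in $C^{k+1}$ by the usual Arzel\`a--Ascoli/harmonic-coordinate argument and passes to a limit, and the limiting identity $\psi_\infty^* g_\infty = g_o$ follows by taking limits in $\psi_{s_i}^* \tg_{s_i} = \bg_{s_i}$.

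\medskip

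The main obstacle I expect is purely a bookkeeping matter: making the "limit of isometries is an isometry" step rigorous requires uniform $C^{k+1}$ bounds on the diffeomorphisms $\psi_{s_i}$ (conjugated into the fixed coordinate charts), and non-collapsing/curvature-bound uniformity to guarantee no degeneration of the charts. All of these are available: the curvature of $\tg_{s_i}$ is controlled because $\tg_{s_i} \to g_o$ smoothly, so injectivity radii and derivative bounds are uniform, and Arzel\`a--Ascoli applies. One should also note that $g_\infty$ is itself a normalized shrinker with $\mu_{g_\infty} = \mu_{g_o}$ (from Theorem \ref{thm:backward convergence}), so the identification is consistent with the normalization \eqref{shrinker_normalization}; indeed, because $\psi_\infty^* g_\infty = g_o$, the potential functions correspond as well, $f_{g_\infty} \circ \psi_\infty = f_o$. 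This last point is what lets one conclude in Proposition \ref{Pro_13} that the tangent flow is genuinely the canonical form of $(M^n, g_o, f_o)$ rather than merely of some abstractly isometric copy.
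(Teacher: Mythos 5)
Your proof is correct and follows the same approach as the paper: both arguments observe that $\bg_{s_i}$ converges in the Cheeger--Gromov sense to both $g_o$ (via the isometry $\psi_{s_i}$ to $\tg_{s_i}$ and the convergence \eqref{SOMEC-G}) and to $g_\infty$ (via \eqref{polynomialconvergence}), and then invoke uniqueness of Cheeger--Gromov limits up to isometry. Your additional remarks on constructing $\psi_\infty$ as a subsequential Arzel\`a--Ascoli limit of $\Phi_i^{-1}\circ\psi_{s_i}\circ\Psi_i$, and on the matching of potentials, are a correct and useful elaboration of the details the paper leaves implicit.
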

\begin{proof}
Since $\bg_s$ and $\tg_s$ differ only by a diffeomorphism,  (\ref{SOMEC-G}) implies that
$$(M,\bg_{s_i})\longrightarrow(M^n,g_o)$$
in the smooth Cheeger-Gromov sense. On the other hand, (\ref{polynomialconvergence}) implies that
$$(M,\bg_{s_i})\longrightarrow(M^n,g_\infty)$$ also in the smooth Cheeger-Gromov sense. It follows that $g_o$ and $g_\infty$ differ only by a diffeomorphism.
\end{proof}

By (\ref{polynomialconvergence}) and Lemma \ref{differbydiffeo}, if we set $s=\log(-t)$, the we have
\begin{eqnarray}\label{anotherpolynomialconvergence}
\left\|\,\frac{1}{|t|}\psi^*_\infty\psi_{\log(-t)}^*g_t-g_o\right\|_{C^{k,\gamma}_{g_o}}\leq C(\log(-t))^{-\beta}\quad\text{ for all }\quad t\gg 1.
\end{eqnarray}
This is the conclusion of Corollary \ref{Coro_main}. 

Since Cheeger-Gromov-Hamilton convergence implies $\mathbb{F}$-convergence (\cite[Theorem 6.1]{CMZ21}), the conclusion of Theorem \ref{Thm_main} follows from the lemma below. 

\begin{Lemma}
For any sequence $\tau_i'\nearrow\infty$, we have
$$(M^n,\tau_i'^{-1}g_{\tau_i't})_{t\in(-\infty,0]}\to (M^n,g_{\infty,t})_{t\in(-\infty,0)}$$
in the smooth Cheeger-Gromov-Hamilton sense, where $g_{\infty,t}$ is the canonical form generated by $(M,g_o,f_o)$ with $g_{\infty,-1}=g_o$. Furthermore, we have
$$v(\cdot,\tau_i't)\to v_\infty(\cdot,t)$$
locally smoothly, where $v$ is the conjugate heat kernel based at any fixed point $(p_1',t_1')\in M\times(-\infty,0]$, and $v_\infty$ is the conjugate heat flow on $(M,g_{\infty,t})$ constructed using $f_o$
\end{Lemma}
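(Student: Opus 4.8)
The plan is to reduce the claimed convergence to the polynomial decay estimate \eqref{anotherpolynomialconvergence}, Hamilton's compactness theorem, and the uniqueness of Ricci flow, and to treat the conjugate heat kernel by a Perelman-monotonicity argument parallel to the proof of Proposition \ref{prop: limit is shrinker}.

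\emph{Convergence of the metrics.} I would set $g_{i,t}:=\tau_i'^{-1}g_{\tau_i' t}$, again a Ricci flow on $(-\infty,0)$, and use the identity $g_{i,t}=|t|\,\tg_{s_i+\log(-t)}$ with $s_i:=\log\tau_i'\nearrow\infty$, where $\tg_s$ is the rescaled flow in \eqref{defofnormalizedbackwardflow}. By \eqref{anotherpolynomialconvergence} (after discarding the diffeomorphisms $\psi_\infty$ and $\psi_{\log(-t)}$, which do not affect Cheeger--Gromov convergence), for each fixed $t<0$ one gets $(M,g_{i,t})\to(M,|t|g_o)$ in the $C^{k,\gamma}$ Cheeger--Gromov sense, uniformly on compact subintervals of $(-\infty,0)$; in particular $(M,g_{i,-1})\to(M,g_o)$. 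Thanks to the Type I structure from Section 3, the $g_{i,t}$ carry curvature bounds uniform in $i$ on each $[-A,-1/A]$, while the one-time-slice convergence gives a uniform injectivity-radius lower bound, so Hamilton's compactness theorem applies; combined with a diagonal argument over $A\nearrow\infty$ and interior (Shi) estimates, this produces, after passing to a subsequence, a $C^\infty$ Cheeger--Gromov--Hamilton limit $(M,\hat g_t)_{t\in(-\infty,0)}$ which is a Ricci flow with $\hat g_t$ isometric to $|t|g_o$ for all $t$ and $\hat g_{-1}$ isometric to $g_o$. Since $g_o$ is a shrinker with potential $f_o$ whose canonical form $(M,g_{\infty,t})$ (normalized by $g_{\infty,-1}=g_o$) has precisely the same property, the uniqueness of a Ricci flow with given smooth initial data on a closed manifold forces $\hat g_t\equiv g_{\infty,t}$. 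As the subsequential limit never depends on the subsequence, the whole sequence converges.

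\emph{Convergence of the conjugate heat kernel.} Writing $v_i(\cdot,t):=\tau_i'^{n/2}v(\cdot,\tau_i' t)=(4\pi|t|)^{-n/2}e^{-f_i}$, a positive solution of the conjugate heat equation along $g_{i,t}$ with $\int_M v_i\,dg_{i,t}=1$, I expect uniform two-sided bounds for $v_i$ on each $[-A,-1/A]$ once $i$ is large (heat-kernel bounds on a closed manifold of uniformly bounded geometry), hence, together with the metric convergence above and standard parabolic regularity, uniform $C^\infty_{\mathrm{loc}}$ estimates for $v_i$ and $f_i$. Passing to a further subsequence, $v_i\to v_\infty'=(4\pi|t|)^{-n/2}e^{-f_\infty'}$ and $f_i\to f_\infty'$ smoothly on $M\times(-\infty,0)$, with $v_\infty'>0$ solving the conjugate heat equation along $g_{\infty,t}$ and integrating to $1$. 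By Perelman's monotonicity \eqref{monotonicityofentropy}, $\W_{p_1',t_1'}(\tau)$ is nonincreasing; since $(M,g_t)$ satisfies \eqref{Nash_bound}, \cite[Proposition 4.6]{MZ21} gives that $\N_{p_1',t_1'}(\tau)$ is bounded below and converges as $\tau\to\infty$, so $\W_{p_1',t_1'}(\tau)$ converges as $\tau\to\infty$. After the change of variables used in the proof of Proposition \ref{prop: limit is shrinker}, the space-time integral of $|t|\,\bigl|\Ric_{g_{i,t}}+\nabla^2_{g_{i,t}}f_i-\tfrac{1}{2|t|}g_{i,t}\bigr|^2 v_i$ over $[-A,-1/A]\times M$ is controlled by $\tfrac12\bigl(\W_{p_1',t_1'}(c_1\tau_i')-\W_{p_1',t_1'}(c_2\tau_i')\bigr)$ for suitable $c_2>c_1>0$, which $\to 0$; letting $i\to\infty$ yields $\Ric_{g_{\infty,t}}+\nabla^2 f_\infty'=\tfrac{1}{2|t|}g_{\infty,t}$ on all of $(-\infty,0)$.

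\emph{Identification and main obstacle.} The canonical form $g_{\infty,t}$ is self-similar with potential $f_\infty$ built from $f_o$, i.e. $\Ric_{g_{\infty,t}}+\nabla^2 f_\infty=\tfrac{1}{2|t|}g_{\infty,t}$. If $f_\infty'$ satisfies the same equation at a time $t$, then $\nabla^2(f_\infty'-f_\infty)=0$, so $f_\infty'-f_\infty$ is constant on the closed manifold $M$, and the common normalization $\int_M(4\pi|t|)^{-n/2}e^{-f}\,dg_{\infty,t}=1$ forces that constant to vanish; hence $v_\infty'=v_\infty$, the conjugate heat flow built from $f_o$, and since the subsequential limit is always $(g_{\infty,t},v_\infty)$, the full sequence converges, which is the assertion. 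I expect the main difficulty to be the analytic bookkeeping --- securing the uniform higher-derivative bounds required for Hamilton's theorem and for the smooth limit of $v_i$, which rely on the Type I bounds of Section 3, and carrying the base-point time shift $t_1'\neq 0$ correctly through the rescaling in the entropy computation; once the limit conjugate heat flow is seen to be self-similar, identifying it with $v_\infty$ is short.
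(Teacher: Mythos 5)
Your proposal follows essentially the same route as the paper: use \eqref{anotherpolynomialconvergence} to get Type~I curvature bounds and time-slice convergence, apply Hamilton's compactness theorem, identify the limit flow with the canonical form of $(M,g_o,f_o)$ by uniqueness of the Ricci flow, and then run the Perelman-monotonicity argument of Proposition \ref{prop: limit is shrinker} together with uniqueness of the normalized potential on a compact shrinker to identify the limit conjugate heat flow.

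The one point where your argument as written is incomplete is the identification $\hat g_t\equiv g_{\infty,t}$ on all of $(-\infty,0)$. "Uniqueness of a Ricci flow with given smooth initial data" pins down $\hat g_t$ only for $t\ge -1$ once you know $\hat g_{-1}$ is isometric to $g_o$; for $t<-1$ you are extending \emph{backward} in time from the slice $t=-1$, and forward uniqueness says nothing there. Knowing that every time slice $\hat g_t$ is isometric to $|t|g_o$ does not by itself force the flow to be the canonical form for $t<-1$, since the isometries could a priori vary with $t$ in a non-self-similar way. The paper closes this by invoking Kotschwar's backward uniqueness theorem for compact Ricci flows \cite{Ko10} alongside forward uniqueness \cite{Ham93b,CZh09}; adding that citation repairs the step. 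The rest of the proposal (noncollapsing/injectivity radius input to Hamilton compactness, the entropy bookkeeping for a base point with $t_1'\neq 0$, and the Hessian argument showing $f_\infty'-f_\infty$ is constant and then zero by the normalization) matches the paper's proof and is correct.
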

\begin{proof}
(\ref{anotherpolynomialconvergence}) implies that $g_t$ has Type I curvature bound, that is,
\begin{equation*}
    \left|\Rm_{g_t}\right|\leq\frac{C}{|t|}\quad\text{ for all }\quad t>0,
\end{equation*}
where $C$ is a constant. By (\ref{Nash_bound}), $(M,g_t)$ is also noncollapsed, and hence the scaled sequence of ancient solutions has a smooth limit.

By (\ref{anotherpolynomialconvergence}), we can find a smooth diffeomorphism $\phi_\infty: M\to M$, such that $(M,\tau_i'^{-1}g_{-\tau_i'})$  $\to (M,\phi_\infty^*g_o)$ in the smooth Cheeger-Gromov sense. By the forward and backward uniqueness of the Ricci flow \cite{Ham93b, CZh09, Ko10}, we have that the Cheeger-Gromov-Hamilton limit of $(M^n,\tau_i'^{-1}g_{\tau_i't})_{t\in(-\infty,0]}$ must be $(M^n,\phi^*_\infty g_{\infty,t})$, where $g_{\infty,t}$ is the canonical form generated by $(M,g_o,f_o)$. Since $\phi_\infty$ is a time-independent diffeomorphism,  which does not affect the Cheeger-Gromov-Hamilton limit at all, we also have
$$(M^n,\tau_i'^{-1}g_{\tau_i't})_{t\in(-\infty,0)}\to (M^n,g_{\infty,t})_{t\in(-\infty,0)}$$
in the smooth Cheeger-Gromov-Hamilton sense.

For the convergence of the conjugate heat kernel, the proof is not different from that of Proposition \ref{prop: limit is shrinker}, and the details are left to the readers. Note that a compact Ricci shrinker has a unique normalized potential function.
\end{proof}

\subsection{Uniqueness of tangent flow at singular point}

The proofs of Theorem \ref{Thm_main_1} and Corollary \ref{Coro_main_1} are not essentially different from the proofs of  Theorem \ref{Thm_main} and Corollary \ref{Coro_main}. The only difference is that one needs to work with the forward modified Ricci flow instead of the backward one. We shall state the following proposition, and the reader may refer to the previous subsection for most of the details.

\begin{Proposition}\label{Pro_24}
Let $(M^n,g_t)_{t\in[-T,0)}$ be a Ricci flow satisfying the conditions in the statements of Theorem \ref{Thm_forwardrigidity}, then $t=0$ is a Type I singularity, and Theorem \ref{Thm_main_1} and Corollary \ref{Coro_main_1} hold for $(M^n,g_t)$ with $\left(\XX^\infty,(\mu^\infty_t)_{t\in -(\infty,0)}\right)$ being the canonical form of $(M,g_o,f_o)$, where $\mu^\infty_t$ is the conjugate heat flow constructed using $f_o$.
\end{Proposition}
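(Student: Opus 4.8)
The plan is to mirror, almost verbatim, the argument given for the ancient case (Proposition \ref{Pro_13}), replacing the backward modified Ricci flow by the forward one and the conjugate heat kernel based at $(p_0,0)$ by a singular conjugate heat kernel $(\mu_t)_{t\in[-T,0)}$ based at the singular time $t=0$. By Proposition \ref{typeI} the scaling factors satisfy $c\le |t_i|Q_i\le C$, so after rescaling and passing to a subsequence we may assume $Q_i|t_i|\equiv 1$; by Proposition \ref{prop: limit is shrinker} the $C^2$ limit $g_o$ in (\ref{C2convergence}) is already a normalized shrinker metric with potential $f_o$, and $g_{\infty,t}$ is its canonical form. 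The Type I singularity statement will follow, as in the ancient case, once a polynomial-rate convergence of the dynamically rescaled flow to $g_o$ is established, since that gives a Type I curvature bound $|\Rm_{g_t}|\le C/|t|$ near $t=0$.

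First I would introduce the dynamic (forward-in-time) rescaling $\tg_s:=e^{s}g_{-e^{-s}}$ for $s\in[0,\infty)$ (so $s\to\infty$ corresponds to $t\to 0^-$), which solves the normalized Ricci flow $\partial_s\tg_s=-2(\Ric_{\tg_s}-\tfrac12\tg_s)$. By Proposition \ref{prop: limit is shrinker}, $(M,\tg_{s_i})\equiv(M,|t_i|^{-1}g_{t_i})\to(M,g_o)$ in the smooth Cheeger--Gromov sense, so for $i$ large there is a diffeomorphism $\psi_i$ with $\psi_i^*\tg_{s_i}\in\mathcal V^{k+10,\gamma}_\delta$ (note: here we need the higher-regularity neighborhood $\mathcal V^{k+10,\gamma}_\delta$, because Theorem \ref{neighborhoods} — the \emph{forward} version — requires extra regularity of the initial datum for forward short-time stability, unlike Theorem \ref{thm:backward convergence}). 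Normalizing $s_{i_0}=0$, $\psi_{i_0}=\mathrm{id}$, the forward pseudolocality theorem (Theorem \ref{thm: forward pseudo}) keeps $\tg_s$ inside a regular neighborhood $\mathcal U$ for a short time $[0,\eta_0]$; since $P:g\mapsto f_g$ is analytic on $\mathcal U$, the modifying ODE $\partial_s\psi_s=-\nabla_{\tg_s}f_{\tg_s}\circ\psi_s$, $\psi_0=\mathrm{id}$, produces a genuine forward modified Ricci flow $\bg_s=\psi_s^*\tg_s$ on a maximal interval $[0,T)$.

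Next I would check the monotonicity hypothesis of Theorem \ref{neighborhoods}: for $t=-e^{-s}$ one has $\mu_{\bg_s}=\mu_{\tg_s}=\mu(g_t,|t|)$, and by monotonicity of Perelman's $\mu$-functional $\mu(g_t,|t|)\le\lim_{i\to\infty}\mu(g_{t_i},|t_i|)=\mu(g_o,1)=\mu_{g_o}$, so $\mu_{\bg_s}\le\mu_{g_o}$ for all $s\in[0,T)$. Now Theorem \ref{neighborhoods} applies: $\bg_s$ stays in $\mathcal V^{k,\gamma}_\varepsilon$; a continuity/reopening argument identical to Lemma \ref{longtime} (using forward pseudolocality with a step $\eta$ uniform in the base time, since $\bg_{\bar s}$ is uniformly close to $g_o$) forces $T=\infty$ — here one uses that the Ricci flow near a finite-time singularity exists on $[-T,0)$, so the rescaled flow is defined for all $s$; and then $\bg_s$ converges in $C^{k,\gamma}_{g_o}$ to a shrinker $g_\infty\in\mathcal U$ with $\mu_{g_\infty}=\mu_{g_o}$ at polynomial rate $\|\bg_s-g_\infty\|_{C^{k,\gamma}_{g_o}}\le Cs^{-\beta}$. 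Comparing the two Cheeger--Gromov limits of $\bg_{s_i}$ (which is $g_o$ from Proposition \ref{prop: limit is shrinker}, and $g_\infty$ from the polynomial estimate) shows $g_\infty$ and $g_o$ differ by a diffeomorphism, whence, with $s=-\log(-t)$, $\|\,\tfrac1{|t|}\psi_\infty^*\psi_{-\log(-t)}^*g_t-g_o\|_{C^{k,\gamma}_{g_o}}\le C(-\log(-t))^{-\beta}$ for $t$ near $0$ — this is Corollary \ref{Coro_main_1}. Finally, this Hölder bound gives the Type I curvature estimate, the flow is automatically noncollapsed (finite-time singularities on closed manifolds are $\kappa$-noncollapsed), so any rescaling sequence $(M,\tau_i'^{-1}g_{\tau_i't})$ subconverges smoothly; forward/backward uniqueness of Ricci flow pins the Cheeger--Gromov--Hamilton limit to $\phi_\infty^*g_{\infty,t}$, and a time-independent diffeomorphism does not affect the limit, giving Theorem \ref{Thm_main_1}; the convergence of the (arbitrary) singular conjugate heat kernel $\mu'_{\tau_i't}$ to $\mu^\infty_t$ follows from Perelman's monotonicity exactly as in Proposition \ref{prop: limit is shrinker}, using uniqueness of the normalized potential function of a compact shrinker, and then $\mathbb F$-convergence follows since Cheeger--Gromov--Hamilton convergence implies it (\cite[Theorem 6.1]{CMZ21}).

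The main obstacle I anticipate is the same technical point flagged after Theorem \ref{neighborhoods}: the forward modified Ricci flow has a genuine short-time existence/stability subtlety absent in the backward case — one must start in a $C^{k+10,\gamma}$-neighborhood to guarantee that the modifying diffeomorphisms exist and that $\bg_s$ re-enters $\mathcal V^{k,\gamma}_\varepsilon$ at each step — so the reopening argument for $T=\infty$ must be carried out with care to ensure the step size $\eta$ is uniform. Everything else is a faithful transcription of Section 5.1 with "backward" replaced by "forward" and the base point replaced by the singular conjugate heat kernel, so the bulk of the details can legitimately be left to the reader as the paper does.
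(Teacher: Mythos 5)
Your proposal matches the paper's proof almost exactly: the paper runs the same dynamic rescaling $\tg_s=e^s g_{-e^{-s}}$, sets up the same modifying ODE for $\psi_s$, verifies $\mu_{\bg_s}\le\mu_{g_o}$ by the same monotonicity argument, invokes Theorem \ref{neighborhoods}, proves $T=\infty$ by a reopening argument, and then runs the identical endgame (polynomial decay, identification of $g_\infty$ with $g_o$ up to diffeomorphism, Type I curvature bound, Cheeger--Gromov--Hamilton convergence, and $\mathbb F$-convergence). The only detail you leave slightly vague --- though you correctly flag it as the delicate point --- is that the paper's reopening step uses \emph{both} forward and backward pseudolocality together with Shi's estimates: the backward pseudolocality gives curvature control on $[\bar s-\eta_1,\bar s]$, and Shi's estimates then supply derivative bounds of order up to $k+10$ on $\psi^*_{\bar s}\tg_s$ for $s\ge\bar s$, which is exactly what is needed to guarantee $\psi^*_{\bar s}\tg_s\in\mathcal V^{k,\gamma}_{2\varepsilon}\subset\mathcal U$ on a time step $\eta$ uniform in $\bar s$, compensating for the loss of derivatives inherent in the forward version of the \L ojasiewicz stability (Theorem \ref{neighborhoods}).
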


In this case, only one lemma has a slightly different proof from the previous subsection, and we shall explain below. Indeed, by the dynamic scaling
$$\tg_s:=e^{s}g_{-e^{-s}}\quad\text{ for all }\quad s\in[0,\infty),$$
we may also argue in the same way as the previous subsection to obtain the following setting:
\begin{gather*}
    \tg_0\in\mathcal V^{k+10,\gamma}_{\delta},
    \\
    \tg_s\in\mathcal V^{k,\gamma}_{2\delta}\subset \mathcal U\quad\text{ for all }\quad s\in[0,\eta_0],
\end{gather*}
and 
\begin{eqnarray*}
\left\{\begin{array}{rl}
     \bg_s&=\ \psi_s^*\tg_s  \\
     \frac{\partial}{\partial s}\psi_s &=\ -\nabla_{\tg_s}f_{\tg_s}\circ  \psi_s,\\
     \psi_0 &=\ \operatorname{id},
\end{array}\right.
\end{eqnarray*}
is well-defined for all $s\in[0,\eta_0]$. Again, let $[0,T)$ be the maximum interval of existence of the above flow. 

\begin{Lemma}
We have $T=\infty$.
\end{Lemma}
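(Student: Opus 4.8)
The plan is to mirror the proof of Lemma \ref{longtime} in the previous subsection, with the backward modified Ricci flow replaced by the forward one and the time direction reversed accordingly. The key structural points are: (i) the monotonicity inequality $\mu_{\bg_s}\le\mu_{g_o}$ holds for all $s\in[0,T)$, which allows us to invoke the forward Łojasiewicz stability theorem (Theorem \ref{neighborhoods}); (ii) assuming $T<\infty$ leads to a contradiction via the backward pseudolocality theorem, which produces a uniform-length extension interval.

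First I would establish the entropy bound. Fixing $s\in[0,T)$ and setting $t=-e^{-s}$, we have $\mu_{\bg_s}=\mu_{\tg_s}=\mu(g_t,|t|)$ since $\bg_s$ and $\tg_s$ differ only by a diffeomorphism and Perelman's $\mu$-functional is diffeomorphism- and scaling-invariant in the stated sense. By Proposition \ref{prop: limit is shrinker} (applied as in \eqref{SOMEC-G} of the analogous argument), $(M,|t_i|^{-1}g_{t_i})\to(M,g_o)$ smoothly, so $\mu(g_{t_i},|t_i|)\to\mu(g_o,1)=\mu_{g_o}$. Now $t\nearrow 0$ corresponds to $s\nearrow\infty$ and $|t|\searrow 0$; by the monotonicity of $\tau\mapsto\mu(g,\tau)$ together with the monotonicity of Perelman's entropy along the unscaled flow as the base-time approaches the singular time, one gets $\mu(g_t,|t|)\le\lim_{i\to\infty}\mu(g_{t_i},|t_i|)=\mu_{g_o}$ for all $t<0$ close enough to $0$ — this is where the forward case flips the inequality sign relative to Lemma \ref{longtime}. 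Hence $\mu_{\bg_s}\le\mu_{g_o}$ for all $s\in[0,T)$, and the hypothesis of Theorem \ref{neighborhoods} is met.

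Next, suppose for contradiction $T<\infty$. By Theorem \ref{neighborhoods}, $\bg_s\in\mathcal V^{k,\gamma}_\varepsilon$ for all $s\in[0,T)$, where we choose $\varepsilon$ small enough that $\mathcal V^{k,\gamma}_{2\varepsilon}\subset\mathcal U$. Arguing as in the proof of Lemma \ref{shorttimestability} (via the backward pseudolocality theorem, Theorem \ref{thm: backward pseudo}, applied to the normalized flow $s\mapsto\psi_{\bar s}^*\tg_s$ with initial datum $\bg_{\bar s}\in\mathcal V^{k,\gamma}_\varepsilon$), there is $\eta>0$ independent of $\bar s\in[0,T)$ with $\psi_{\bar s}^*\tg_s\in\mathcal V^{k,\gamma}_{2\varepsilon}\subset\mathcal U$ for all $s\in[\bar s,\bar s+\eta]$; the independence of $\bar s$ comes from the fact that $\bg_{\bar s}$ is uniformly close to $g_o$, so all its geometric quantities are controlled in terms of $g_o$. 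Since $P:g\mapsto f_g$ is analytic on $\mathcal U$, the modifying ODE
\[
\left\{\begin{array}{rl}
     \bg_s&=\ \phi_s^*\psi^*_{\bar s}\tg_s,  \\
     \frac{\partial}{\partial s}\phi_s &=\ -\nabla_{\psi^*_{\bar s}\tg_s}f_{\psi^*_{\bar s}\tg_s}\circ  \phi_s,\\
     \phi_{\bar s} &=\ \operatorname{id},
\end{array}\right.
\]
is well-defined for $s\in[\bar s,\bar s+\eta]$, so $\bg_s$ extends past $T$ — contradicting maximality. Therefore $T=\infty$.

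I expect the main obstacle to be the careful bookkeeping of the entropy monotonicity in step one: one must check that along the \emph{forward} rescaling $\tg_s=e^sg_{-e^{-s}}$, the relevant entropy quantity evaluated along the flow decreases to $\mu_{g_o}$ from above (rather than increasing to it from below as in the ancient case), using that the base time approaches the singular time $0$. The pseudolocality extension argument is essentially identical to the previous subsection — the short-time stability of the normalized flow near $g_o$ is uniform because $g_o$ is fixed — so the only genuinely new ingredient is getting the direction of the inequality $\mu_{\bg_s}\le\mu_{g_o}$ right, which in turn is exactly what matches the hypothesis of the forward Łojasiewicz theorem.
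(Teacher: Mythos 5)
Your overall strategy matches the paper's: establish the entropy bound $\mu_{\bg_s}\leq\mu_{g_o}$ so that the forward \L{}ojasiewicz stability theorem (Theorem~\ref{neighborhoods}) applies, then suppose $T<\infty$ and derive a contradiction by uniformly extending the modifying ODE. The entropy step is correct: by Perelman's monotonicity based at the singular time $t_0=0$, the quantity $t\mapsto\mu(g_t,|t|)$ is nondecreasing for $t<0$, and since $\mu(g_{t_i},|t_i|)=\mu(|t_i|^{-1}g_{t_i},1)\to\mu(g_o,1)=\mu_{g_o}$ by the smooth convergence of Proposition~\ref{prop: limit is shrinker}, one obtains $\mu(g_t,|t|)\leq\mu_{g_o}$ for all $t<0$, hence $\mu_{\bg_s}\leq\mu_{g_o}$.

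There is, however, a concrete gap in the pseudolocality step. You cite only the backward pseudolocality theorem (Theorem~\ref{thm: backward pseudo}) "applied to the normalized flow $s\mapsto\psi^*_{\bar s}\tg_s$ with initial datum $\bg_{\bar s}$", and claim this yields $\psi^*_{\bar s}\tg_s\in\mathcal V^{k,\gamma}_{2\varepsilon}$ for $s\in[\bar s,\bar s+\eta]$. But in the present (forward) case the normalized flow $\tg_s=e^sg_{-e^{-s}}$ satisfies $\partial_s\tg_s=-2(\Ric_{\tg_s}-\tfrac12\tg_s)$, i.e., it is a \emph{forward} normalized Ricci flow in the $s$-variable, and backward pseudolocality applied at $s=\bar s$ controls $|\Rm|$ only for $s\in[\bar s-\eta_1,\bar s]$ — the \emph{wrong} direction. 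To control $|\Rm_{\psi^*_{\bar s}\tg_s}|$ for $s\in[\bar s,\bar s+\eta_1]$ one needs the \emph{forward} pseudolocality theorem (Theorem~\ref{thm: forward pseudo}). The paper's proof of this lemma in fact uses both: forward pseudolocality to get the curvature bound on $[\bar s,\bar s+\eta_1]$, and backward pseudolocality to extend it to $[\bar s-\eta_1,\bar s]$, so that Shi's estimates then give uniform bounds on $|\nabla^l\Rm|$ for $l\leq k+10$ on $[\bar s,\bar s+\eta_1]$ (rather than bounds degenerating as $s\searrow\bar s$). This is a genuine change from the previous subsection, where $\tg_s$ is a backward normalized Ricci flow and backward pseudolocality alone is the right tool; so the remark that "the pseudolocality extension argument is essentially identical" is not quite accurate. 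With this correction, the rest of your argument goes through as written.
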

\begin{proof}
By Theorem \ref{neighborhoods}, we have
$$\bg_s\in\mathcal V^{k,\gamma}_{\varepsilon}\quad\text{ for all }\quad s\in[0,T).$$
Here we would like to take $\varepsilon$ small enough, such that $$\mathcal V^{k,\gamma}_{2\varepsilon}\subset\mathcal U.$$

Arguing by contradiction, let us assume $T<\infty$. Since $T>\eta_0$, for all $\bar s\in[\eta_0,T)$, we can apply Theorem \ref{thm: forward pseudo} and Theorem \ref{thm: backward pseudo} to the flow $s\to \psi^*_{\bar s}\tg_s$ at $s=\bar s$. We can find $\eta_1>0$ and $C<\infty$ independent of $\bar s$, such that
$$\left|\Rm_{\psi^*_{\bar s}\tg_s}\right|_{\psi^*_{\bar s}\tg_s}\leq C\quad\text{ for all }\quad s\in[\bar s-\eta_1,\bar s+\eta_1],$$
and consequently by Shi's estimate
$$\left|\nabla^l\Rm_{\psi^*_{\bar s}\tg_s}\right|_{\psi^*_{\bar s}\tg_s}\leq C\quad\text{ for all }\quad s\in[\bar s,\bar s+\eta_1]\quad\text{ and }\quad l\leq k+10.$$
Hence, we can find a $\eta>0$ independent of $\bar s\in[\eta_0,T)$, such that
$$\psi^*_{\bar s}\tg_s\in\mathcal V^{k,\gamma}_{2\varepsilon}\subset\mathcal U \quad\text{ for all }\quad s\in[\bar s,\bar s+\eta].$$
Therefore, the flow 
\begin{eqnarray*}
\left\{\begin{array}{rl}
     \bg_s&=\ \psi^*_{s}\tg_s  \\
     \frac{\partial}{\partial s}\phi_s &=\ -\nabla_{\psi^*_{\bar s}\tg_s}f_{\psi^*_{\bar s}\tg_s}\circ  \phi_s,\\
     \phi_{\bar s} &=\ \operatorname{id},
\end{array}\right.
\end{eqnarray*}
is well define for $s\in[\bar s,\bar s+\eta]$. Since $\bar s\in[0,T)$ is arbitrary, we have that $\bg_s$ exists on $[0,T+\eta)$; this is a contradiction.
\end{proof}

The other details of the proof of Proposition \ref{Pro_24} are slight modifications of the previous subsection.

\def \tg {\widetilde{g}}

\section{Tangent flows at infinity do not depend on the basepoints}

In this section, we prove that for an $H$-concentrated metric flow, the tangent flow at infinity do not depend on the base point. For all the basic definitions involved, such as the variance, the Weissernstein distance, etc., the author may refer to \cite{Bam20b}.
We will first prove that time shifting and parabolic scaling are continuous with respect to the $\mathbb{F}$-distance, which are natural but fundamentally important. The main theorem then follows as a consequence in the same spirit as \cite[Proposition 8.2.8]{BBI01}.

Let $\XX$ be a metric flow over some $I\subset \IR.$ When necessary, we will put $\XX$ as an upper index for geometric quantities to stress that they are quantities on $\XX$. For example, $\nu^{\XX}_{x\,|\,s}$ represents the conjugate heat kernel on $\XX$ at time $s$ based at $x\in \XX$. 

We follow Cheeger's notations as in \cite{Bam20a}. We denote by
$\Psi(a_1,\cdots,a_k\,|\,b_1,\cdots, b_m)$
any function that depends on $a_1,\cdots,a_k,b_1,\cdots,b_m$ and tends to $0$ if $(a_1,\cdots,a_k)\to 0$ and $b_1,\cdots,b_m$ are fixed.
We will write $\Psi=\Psi(a_1,\cdots,a_k\,|\,b_1,\cdots, b_m)$
when there is no ambiguity and
we will add lower indices such as $\Psi_1,\Psi_2,\cdots$ to denote some other small quantities to distinguish from $\Psi.$ The exact values of these functions may vary from line to line.

As mentioned in Section 2.5, for any metric flow $\XX$ over some  $I\subset \IR$, $t_0\in \IR$, and $\lambda>0,$ we denote by
$\XX^{-t_0,\lambda}$ the metric flow obtained by first applying a $-t_0$ time shift to $\XX$ and then a parabolic rescaling by factor $\lambda$.
To be more specific, if we write
$\YY=\XX^{-t_0,\lambda}$, then
$\YY$ is a metric flow defined over
$J:=\lambda^2(I-t_0)$, such that
for each $t\in J,$ we have
\[
    \YY_{t}
    := \XX_{\lambda^{-2}t+t_0},\quad
    \dist^{\YY_t}
    := \lambda \cdot \dist^{\XX_{\lambda^{-2}t+t_0}}.
\]
For any $y\in \YY_t=\XX_{\lambda^{-2}t+t_0}$ and $s,t\in J$ with $s\le t$, we define the conjugate heat kernels by
\[
    \nu^{\YY}_{y\,|\,s}
    := \nu^{\XX}_{y\,|\,\lambda^{-2}s+t_0}.
\]
For any conjugate heat flow $(\mu_t)_{t\in I'}$ on $\XX$ over $I'\subset I$, we define
\[
    \mu^{-t_0,\lambda}_{t}
    := \mu_{\lambda^{-2}t + t_0}, \quad t\in \lambda^2(I'-t_0).
\]
For simplicity, we write
\[
    \XX^{-t_0} := \XX^{-t_0,1},\quad
    \mu^{-t_0}_t := \mu^{-t_0,1}_t
\]
for any metric flow $\XX$ and conjugate heat flow $\mu_t.$
We first prove that time shifting is continuous with respect to the $\mathbb{F}$-distance for an $H$-concentrated metric flow.

\begin{Proposition}

 \label{prop: translation continuity}
For any $H$, $V$, $T<\infty$, and $\epsilon>0,$ there is a 
$\delta=\delta(H,V,T,\epsilon)>0$ such that the following holds.
Let $\left(\XX,(\mu_t)_{t\in [-T-1,0)}\right)$ be an $H$-concentrated metric flow pair over $[-T-1,0]$.
Suppose that
\[ \sup_t {\rm Var}(\mu_t)\le V.
\]
If  $0\le \sigma\le \delta,$ 
then
\[
    \dist_{\mathbb{F}}
    \left(
    \left(
    \XX_{[-T,0]},
    (\mu_t)_{t\in [-T,0)}
    \right),
    \left(
    \XX_{[-T,0]}^{\sigma},
    (\mu_t^{\sigma})_{t\in [-T,0)}
    \right)
    \right) < \epsilon.
\]
\end{Proposition}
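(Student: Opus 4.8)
The plan is to construct an explicit correspondence (a pair of "couplings" in the sense of Bamler's $\mathbb{F}$-distance) between the metric flow pair $\bigl(\XX_{[-T,0]},(\mu_t)_{t\in[-T,0)}\bigr)$ and its small time-shift $\bigl(\XX^{\sigma}_{[-T,0]},(\mu_t^{\sigma})_{t\in[-T,0)}\bigr)$, and to estimate the resulting $\mathbb{F}$-distance directly from the definition. Recall that computing $\dist_{\mathbb{F}}$ over a compact time interval involves choosing, for each time $t$ in a discrete set (or a.e.\ $t$), a metric space $Z_t$ into which both time slices isometrically embed, and then bounding the Wasserstein distance $d_{W}^{Z_t}\bigl(\varphi_{t*}\mu_t,\varphi'_{t*}\mu_t^{\sigma}\bigr)$ between the pushed-forward conjugate heat flow measures, together with a control on how the correspondence varies in $t$ (the gradient/compatibility condition). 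The key point is that a time-shift by $\sigma$ identifies $\XX^{\sigma}_t$ with $\XX_{\sigma^{-2}t}$... but here the shift is additive, so $\XX^{\sigma,1}_t=\XX_{t-\sigma}$ (with no rescaling, since $\lambda=1$ in the superscript notation $\XX^{-t_0}:=\XX^{-t_0,1}$). Wait — re-reading, $\XX^{\sigma}$ means $\XX^{\sigma,1}$, i.e.\ a pure time shift by $\sigma$ with no parabolic rescaling. So the two flows are the \emph{same} underlying metric flow, just with the conjugate heat flow reindexed: $\mu_t^{\sigma}=\mu_{t-\sigma}$ on $\XX_{t-\sigma}$. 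The natural correspondence is therefore to compare, at time $t$, the slice $(\XX_t,\mu_t)$ with the slice $(\XX_{t-\sigma},\mu_{t-\sigma})$, linked through the metric flow's own heat kernel transport.

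Concretely, first I would use the conjugate heat kernel / heat flow structure of $\XX$ itself to transport $\mu_t$ backward by time $\sigma$: the measure $\mu_{t-\sigma}$ is exactly the evolution of $\mu_t$ under the conjugate heat flow of $\XX$ over the time interval $[t-\sigma,t]$. Bamler's theory provides, for an $H$-concentrated flow, the basic estimate that the "displacement" incurred by running the conjugate heat flow for a short time $\sigma$ is controlled: the relevant quantities are the variance bound $\mathrm{Var}(\mu_t)\le V$ and the parameter $H$, which together with the short time interval $\sigma$ give a bound of the form $\Psi(\sigma\mid H,V)$ on the $W_1$-distance between $\mu_t$ and (a natural coupling/identification with) $\mu_{t-\sigma}$ inside an ambient space. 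The second ingredient is to cover $[-T,0]$ by finitely many times and to choose the ambient spaces $Z_t$ and isometric embeddings coherently so that the gradient-bound part of the $\mathbb{F}$-distance definition is satisfied; since on each short sub-interval we are just following the flow's own heat kernel coupling, the coherence is essentially automatic from the semigroup property of conjugate heat flow and the metric flow axioms (in particular the fact that $d_{W_1}$ along conjugate heat flow is non-expanding in the appropriate sense, Bamler's \cite[Section 3]{Bam20b}).

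The main technical obstacle, I expect, is handling the endpoint behaviour near $t=0$: the conjugate heat flow measures $\mu_t$ may degenerate or concentrate as $t\to 0$, and the time-shift pushes the interval $[-T,0)$ to $[-T,0)$ but samples $\mu$ at times as late as $-\sigma$ shifted to $0^-$; one must ensure the variance bound and $H$-concentration still give uniform control there. This is why the hypothesis extends the flow to $[-T-1,0]$ and controls $\sup_t \mathrm{Var}(\mu_t)\le V$ on the whole extended interval — the extra room on the left lets us absorb the shift without running off the domain, and the uniform variance bound prevents pathological concentration. A secondary point is bookkeeping: assembling the pointwise-in-$t$ Wasserstein bounds $\Psi(\sigma\mid H,V)$ into a single bound on $\dist_{\mathbb{F}}$ over $[-T,0]$ picks up a factor depending on $T$ (from the length of the interval and the number of sampling times), which is why $\delta$ is allowed to depend on $T$ as well. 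Once these are in place, choosing $\delta$ so that the accumulated error is below $\epsilon$ completes the proof.
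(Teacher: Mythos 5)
Your overall plan — use the metric flow's own conjugate heat kernel to transport $\mu_t$ back to $\mu_{t-\sigma}$ and build the correspondence from that coupling — is indeed the right idea, and the coupling $q_t = \int_{\XX_t}(\nu_{y\,|\,t-\sigma}\otimes \delta_y)\,d\mu_t(y)$ is exactly what the paper uses. But there is a genuine gap in the step where you assert a uniform bound ``of the form $\Psi(\sigma\mid H,V)$'' on the Wasserstein displacement between $\mu_t$ and $\mu_{t-\sigma}$ valid for all $t$. That bound is \emph{false in general}: a metric flow is allowed to have jump discontinuities in $t$, so $(\XX_t,\dist_t,\mu_t)$ and $(\XX_{t-\sigma},\dist_{t-\sigma},\mu_{t-\sigma})$ can be wildly different metric-measure spaces even for tiny $\sigma$. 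What makes the proof work is not the semigroup property or any automatic coherence, but a measure-theoretic argument: the paper introduces the monotone function $v(t)=\mathrm{Var}(\mu_t)+Ht$ and the exceptional set $E_{\delta,\beta}=\{t: v(t)-v(t-\delta)\ge\beta\}$, uses a Vitali-type covering of $E$ to show $|E|\le 2(V+H(T+1))\delta/\beta$, and only \emph{outside} $E\cup(-H^{-1}\delta^3,0]$ applies \cite[Propositions 4.1 and 4.14]{Bam20b} to get the mass-distribution lower bound $b_1\ge b(\cdot)$ and hence the ambient spaces $Z_t$ with good embeddings. On the bad set the ambient spaces are arbitrary, and one relies on the allowance in the definition of $\dist_{\mathbb{F}}$ for a small set of times with no control. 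Your proposal never confronts this, which is the central obstacle.

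A second, smaller gap is that even off the exceptional set, bounding the $\mathbb{F}$-distance requires controlling the \emph{integrated} Wasserstein distance
$\int_{\XX^\sigma_t\times\XX_t}\dist^{Z_s}_{W_1}(\phi_{s*}\nu_{x^1\,|\,s-\sigma},\psi_{s*}\nu_{x^2\,|\,s})\,dq_t$
for all $s\le t$, not just the distance between the measures $\mu_t,\mu_{t-\sigma}$ at a single time. The paper splits this into pieces $I_1,I_2$ and then $A_1,A_2$ according to whether the intermediate point lies in the good subset $W_s^\delta$, and estimates each piece using $H$-concentration, monotonicity of $W_1$ under conjugate heat flow, and the variance bound in a nontrivial way. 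This estimate is where most of the work is; ``coherence is essentially automatic from the semigroup property'' undersells it substantially. Your endpoint remark about $t\to 0$ and the extra margin $[-T-1,-T)$ is on target — the paper does throw $(-H^{-1}\delta^3,0]$ into the exceptional set and does use the extra room — but without the exceptional-set machinery the argument does not go through.
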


\textbf{Remark.} According to the definitions of metric flow pair and $\mathbb{F}$-distance, the exact form of the existence interval $I'$ of $\mu_t$ does not matter. It will be clear in the proof that we only need to assume that $|[-T,0]\setminus I'|=0,$ and the future completion (see \cite[Definition 4.42]{Bam20b}) of $I'$ is $[-T,0]$ because of the assumptions of \cite[Proposition 4.1]{Bam20b}.  For simplicity, we assumed $I'=[-T-1,0)$ in the proposition above. In applications, we will use conjugate heat kernels which exist over, e.g., $[-T-1,0].$

\begin{proof}
Let $I=[-T,0)$ and let $\beta,\delta\in (0,1/2)$ be constants to be determined later.
Then the function $v(t):={\rm Var}(\mu_t) + Ht$ is non-decreasing in $t$ by \cite[Proposition 3.34]{Bam20b}.
Let
\[
    E:=E_{\delta,\beta}:=\{ t\in [-T,0): v(t)-v(t-\delta) \ge \beta\}.
\]
Let us find a maximal finite sequence $t_1,\cdots,t_N\in E$ such that $\{[t_k-\delta,t_k]\}_{k=1}^N$ are
disjoint, then we have
\[
    A:=V+H(T+1) \ge v(0)-v(-T-1) \ge \sum_{k=1}^N v(t_k)-v(t_k-\delta)
    \ge \beta N.
\]
By the maximality of $\{t_k\}_{k=1}^N$, we have that, for any $t\in E,$
there is $1\le k\le N$ such that $[t-\delta,t]$ intersects 
$[t_k-\delta,t_k].$ Hence $t\in [t_k-\delta,t_k+\delta]$ and we have
\[
    E\subset \bigcup_{1\le k\le N} [t_k-\delta,t_k+\delta],\quad
    |E| \le 2N\delta \le \frac{2A\delta}{\beta}.
\]

Let $\tau_j=2^{-3(j+1)}/H$ for each $j\geq 0$ and we define $b:(0,1]\to(0,1)$ by
\[
    b(s):= \tfrac{1}{2}
    \Phi\left(
        - \sqrt{\tfrac{8V}{s  \tau_j}}
    \right)\quad
    \text{ for }\quad s \in (2(\tau_j H)^{1/3}, 2(\tau_{j-1} H)^{1/3}],
\]
where $\Phi:\mathbb{R}\to(0,1)$ is the same function as defined in \cite[(3.1)]{Bam20b}, satisfying
$$\Phi'(x)=(4\pi)^{-\frac{1}{2}}e^{-\frac{x^2}{4}},\quad \lim_{x\to-\infty}\Phi(x)=0,\quad \lim_{x\to\infty}\Phi(x)=1.$$
So $b$ is a positive increasing function defined on $(0,1]$.
Applying \cite[Proposition 4.1]{Bam20b} with $r=1$, we have that, given $\sigma\in[0,\delta]$, for each $t\in I\setminus \big(E\cup(-H^{-1}\sigma^3,0]\big),$ it holds that 
\begin{eqnarray}\label{massconcentrationlowerbound}
    b^{(\XX_t,\dist_t,\mu_t)}_1(\varepsilon)
    \ge b(\varepsilon) \quad
    \text{ for all } \quad\varepsilon\in[\sigma,1],
\end{eqnarray}
where $b_r^{(X,d,\mu)}:(0,1]\to(0,1]$ is the mass distribution function at scale $r>0$; see \cite[Definition 2.17]{Bam20b}. 

Next, we shall apply \cite[Proposition 4.14]{Bam20b}. To this end, we verify that the assumptions therein are satisfied by $\XX$ with $t\in I\setminus E$ and $t'=t-\sigma$, where $\sigma\in[0,\delta]$. Indeed, applying \cite[Lemma 4.7]{Bam20b} and the definition of $E$, we have
\begin{align}\label{thevariancesmalldifference}
 \int_{\XX_t\times \XX_t} \dist_t\, d\mu_t d\mu_t
&- \int_{\XX_{t'}\times \XX_{t'}} \dist_{t'}\, d\mu_{t'} d\mu_{t'}
\le \sqrt{v(t)-v(t')} + 2\sqrt{H(t-t')}\\\nonumber
&\le \sqrt{v(t)-v(t-\delta)} + 2\sqrt{H(t-t')}\\\nonumber
&\le \sqrt{\beta} + 2\sqrt{H\delta}
< \Psi(\delta,\beta\,|\,H).
\end{align}
Given (\ref{massconcentrationlowerbound}) and (\ref{thevariancesmalldifference}), we can now apply \cite[Proposition 4.14]{Bam20b} with $r=1$, and conclude that for each $t\in I\setminus \big(E\cup(-H^{-1}\delta^3,0]\big)$ and $\sigma\in[0,\delta]$, writing $t'=t-\sigma r^2=t-\sigma\in[t-\delta,t]$, 
there is a closed subset $W_t\subset \XX_t$ such that:
\begin{enumerate}[(1)]
    \item $\mu_t(\XX_t\setminus W_t)<\Psi(\delta,\beta\,|\,H,V)$.
    
    \item For any $y_1,y_2\in W_t,$ we have
    \[
        0\leq \dist_t(y_1,y_2) - \dist^{\XX_{t'}}_{W_1}(\nu_{y_1\,|\,t'},\nu_{y_2\,|\,t'})
        <\Psi(\delta,\beta\,|\,H,V).
    \]
    \end{enumerate}
Furthermore, there exist a metric space $(Z_t,\dist^{Z_t})$ and isometric embeddings
$\psi_t:\XX_t\to Z_t$,  $\phi_t:\XX_{t'}\to Z_t,$ such that:
\begin{enumerate}[(1)]
\setcounter{enumi}{2}
\item For any $x\in \XX_{t'},y\in W_t,$ we have
\[
    \dist^{Z_t}(\phi_t(x),\psi_t(y))
    \le \dist^{\XX_{t'}}_{W_1}(\delta_x, \nu_{y\,|\,t'}) + \Psi(\delta,\beta\,|\,H,V).
\]

\item We can construct the following coupling between $\mu_t$ and $\mu_{t'}$
\[
q_t
:= \int_{\XX_t} (\nu_{y\,|\,t'}\otimes \delta_y)\, d\mu_t(y)
\in \Pi(\mu_{t'}, \mu_{t})
\]
satisfying the estimate
\[
    \dist^{Z_t}_{W_1}
    (\phi_{t*}\mu_{t'},\psi_{t*}\mu_t)\leq\int_{\XX_{t'}\times\XX_{t}}\dist^{Z_t}(\phi_t(x),\psi_t(y))dq_t(x,y)
    < \Psi(\delta,\beta\,|\,H,V).
\]
\end{enumerate}
For $t\in I\cap\big(E\cup(-H^{-1}\delta^3,0]\big),$ we let $(Z_t,\dist^{Z_t})$ be an arbitrary separable metric space into which
$(\XX_t,\dist_t)$ and $(\XX_{t-\sigma},\dist_{t-\sigma})$ can be embedded.
Now $\mathfrak{C}=\left((Z_t,\dist^{Z_t}),(\psi_t,\phi_t)\right)_{_{t\in I}}$ serves as a correspondence between $\XX_{I}$ and $\XX_{I}^{\sigma}.$

The goal is to show that for each $s\le t,s,t\in I\setminus \big(E\cup(-H^{-1}\delta^3,0]\big),$ it holds that
\begin{equation*}
   \text{if } \delta\le \bar \delta(\epsilon,H,V,T), \quad \text{ then } \int_{\XX^{\sigma}_{t}\times \XX_{t}} \dist^{Z_s}_{W_1}
    \left(
    \phi_{s*}\nu_{x^1\,|\,s-\sigma},
    \psi_{s*}\nu_{x^2\,|\,s}
    \right)
    d q_t(x^1,x^2)
    < \epsilon,
\end{equation*}
where $q_t$ is the coupling defined in item (4) above. We need only to verify the case where $s<t,$ because the equality case is equivalent to item (4) above. We write
\[
    s':=s-\sigma,\quad t':=t-\sigma.
\]
Note that
\begin{align*}
   & \int_{\XX^{\sigma}_{t}\times \XX_{t}} \dist^{Z_s}_{W_1}
    \left(
    \phi_{s*}\nu_{x\,|\,s'},
    \psi_{s*}\nu_{y\,|\,s}
    \right)
    d q_t(x,y)\\
=& \int_{\XX_{t}} \int_{\XX_{t}^{\sigma}} 
\dist^{Z_s}_{W_1}
    \left(
    \phi_{s*}\nu_{x\,|\,s'},
    \psi_{s*}\nu_{y\,|\,s}
    \right)
    \, d\nu_{y\,|\,t'}(x)
    \,d\mu_t(y)\\
\le& 
\int_{\XX_{t}} \int_{\XX_{t}^{\sigma}} 
\dist^{\XX_{s'}}_{W_1}
(\nu_{x\,|\,s'}, \nu_{y\,|\,s'})
    \, d\nu_{y\,|\,t'}(x)
    \,d\mu_t(y)\\
   &\quad + \int_{\XX_{t}} \int_{\XX_{t}^{\sigma}} 
\dist^{Z_s}_{W_1}
    \left(
    \phi_{s*}\nu_{y\,|\,s'},
    \psi_{s*}\nu_{y\,|\,s}
    \right)\, d\nu_{y\,|\,t'}(x)
    \,d\mu_t(y)\\
=:&\ I_1 + I_2.
\end{align*}

For $I_1,$ by the monotonicity formula \cite[Propsition 3.24(b)]{Bam20b}, the definition of the $W_1$-Weissernstein distance, and the definition of variance, we have
\begin{align*}
    I_1
    &\le \int_{\XX_{t}} \int_{\XX_{t}^{\sigma}} 
    \dist^{\XX_{t'}}_{W_1}(\delta_x, \nu_{y\,|\,t'})
     \, d\nu_{y\,|\,t'}(x)
    \,d\mu_t(y) \\
    & = \int_{\XX_t}d\mu_t(y)
    \int_{\XX_{t'}\times\XX_{t'}} \dist_{t'}\, d\nu_{y\,|\,t'}d\nu_{y\,|\,t'}\\
    &\le \int_{\XX_{t}} 
    {\rm Var}(\nu_{y\,|\,t'})^{1/2}
    \,d\mu_t(y)
    \le \sqrt{H \delta},
\end{align*}
where we  used \cite[Proposition 3.34]{Bam20b} in the last inequality.

$I_2$ can be simplified as
\begin{align*}
    I_2
    &= \int_{\XX_{t}}
    \dist^{Z_s}_{W_1}
    \left(
    \phi_{s*}\nu_{y\,|\,s'},
    \psi_{s*}\nu_{y\,|\,s}
    \right)
    \,d\mu_t(y).
\end{align*}
We shall use the same argument as in the proof of \cite[Lemma 4.18]{Bam20b}
to obtain an estimate of $I_2$. Fix any $x\in \XX_{s'}$ and $w\in W_{s}^\delta\subset \XX_s,$ where we let
\[
    W^{\delta}=B(W,\delta),
\]
following the notations in \cite[Lemma 4.18]{Bam20b}. 
Let $w'\in W_s$ such that $\dist_s(w,w')<\delta$, then, by item (3) above, we have
\begin{align}\label{summanonsense}
\nonumber
    \dist^{Z_s}(\phi_s(x),\psi_s(w))
    &\le \dist^{Z_s}(\phi_s(x),\psi_s(w')) + \delta
    \le \dist^{\XX_{s'}}_{W_1}(\delta_x, \nu_{w'\,|\,s'}) + \Psi(\delta,\beta\,|\,H,V)\\
    &\le  \dist^{\XX_{s'}}_{W_1}(\delta_x, \nu_{w\,|\,s'})
    + \dist_s(w,w')+ \Psi(\delta,\beta\,|\,H,V)\\\nonumber
    &\le \dist^{\XX_{s'}}_{W_1}(\delta_x, \nu_{w\,|\,s'}) + \Psi(\delta,\beta\,|\,H,V),
\end{align}
where we have applied \cite[Propsition 3.24(b)]{Bam20b} again. Define 
\[
    q:=q_{y,s}
    := \int_{\XX_s} \nu_{z\,|\,s'}\otimes \delta_z\, 
    d\nu_{y\,|\,s}(z)
    \in \Pi(\nu_{y\,|\,s'},\nu_{y\,|\,s})\quad \text{ for each }\quad  y\in \XX_t,
\]
we may use the definition of the $W_1$-Weissernstein distance to estimate the integrand of $I_2$ as follows.
\begin{align*}
    \dist^{Z_s}_{W_1}
    \left(
    \phi_{s*}\nu_{y\,|\,s'},
    \psi_{s*}\nu_{y\,|\,s}
    \right)
    &\le  
    \int_{\XX_{s'}\times \XX_{s}}
    \dist^{Z_s}(\phi_{s}(x_1), \psi_s(x_2))
    \, dq(x_1,x_2)\\
   & =
   \int_{\XX_{s}}
    \int_{\XX_{s'}}
    \dist^{Z_s}(\phi_{s}(x), \psi_s(z))
    \, d\nu_{z\,|\,s'}(x)
    \, d\nu_{y\,|\,s}(z)\\
    & =
    \int_{W_s^{\delta}}\int_{\XX_{s'}}
    + \int_{\XX_s\setminus W_s^{\delta}}\int_{\XX_{s'}}
    =: A_1(y) + A_2(y).
\end{align*}
On one hand, by \eqref{summanonsense} and applying \cite[Proposition 3.34]{Bam20b} again, we have
\begin{align*}
  A_1(y) = &  \int_{W_s^{\delta}}
    \int_{\XX_{s'}}
    \dist^{Z_s}(\phi_{s}(x), \psi_s(w))
    \, d\nu_{w\,|\,s'}(x)
    \, d\nu_{y\,|\,s}(w)\\
\le & \int_{W_s^{\delta}}
    \int_{\XX_{s'}}
        \dist^{\XX_{s'}}_{W_1}
        (\delta_x, \nu_{w\,|\,s'})
    \, d\nu_{w\,|\,s'}(x)
    \, d\nu_{y\,|\,s}(w) + \Psi(\delta,\beta\,|\,H,V)\\
=&\int_{W_s^{\delta}}
    \left(\int_{\XX_{s'}\times \XX_{s'}}
    \dist_s(x,x')
    \,d\nu_{w\,|\,s'}(x)
    \,d\nu_{w\,|\,s'}(x')
    \right) d\nu_{y\,|\,s}(w) + \Psi(\delta,\beta\,|\,H,V)\\
\le& 
\int_{W_s^{\delta}}
    {\rm Var}(\nu_{w\,|\,s'})^{1/2}
    \, d\nu_{y\,|\,s}(w) + \Psi(\delta,\beta\,|\,H,V)\\
\le& \sqrt{H\delta} + \Psi(\delta,\beta\,|\,H,V).
\end{align*}
On the other hand, 
if $\delta,\beta$ are sufficiently small, then
$\mu_s(W_s^{\delta})\ge 1/2$, and we may compute as follows using (\ref{summanonsense}) and \cite[Proposition 3.34]{Bam20b}.
\begin{align}\label{superextranonsense}
    A_2(y)/2=
    &\ \frac{1}{2}
    \int_{\XX_s\setminus W_s^{\delta}} 
    \int_{\XX_{s'}}
    \dist^{Z_s}(\phi_{s}(x), \psi_s(z))
    \, d\nu_{z\,|\,s'}(x)
    \, d\nu_{y\,|\,s}(z)\\\nonumber
\le &  \,
    \int_{W_s^{\delta}}
    \int_{\XX_s\setminus W_s^{\delta}} 
    \int_{\XX_{s'}}
    \dist^{Z_s}(\phi_{s}(x), \psi_s(z))
    \, d\nu_{z\,|\,s'}(x)
    \, d\nu_{y\,|\,s}(z)
    \, d\mu_s(w)\\\nonumber
\le& \,
\int_{W_s^{\delta}}
    \int_{\XX_s\setminus W_s^{\delta}} 
    \int_{\XX_{s'}}
    \big\{\dist^{Z_s}(\phi_{s}(x), \psi_s(w))
    + \dist_s(w,z) \big\}
    \, d\nu_{z\,|\,s'}(x)
    \, d\nu_{y\,|\,s}(z)
    \, d\mu_s(w)\\\nonumber
\le& \,
\int_{W_s^{\delta}}
    \int_{\XX_s\setminus W_s^{\delta}} 
    \int_{\XX_{s'}}
    \dist^{\XX_{s'}}_{W_1}
    (\delta_x, \nu_{w\,|\,s'})
    \, d\nu_{z\,|\,s'}(x)
    \, d\nu_{y\,|\,s}(z)
    \, d\mu_s(w) + \Psi(\delta,\beta\,|\,H,V)\\\nonumber
& \, 
+ \int_{W_s^{\delta}}
    \int_{\XX_s\setminus W_s^{\delta}}
     \dist_s(w,z) 
    \, d\nu_{y\,|\,s}(z)
    \, d\mu_s(w)\\\nonumber
=& \,
\int_{W_s^{\delta}}
    \int_{\XX_s\setminus W_s^{\delta}} 
    \left(\int_{\XX_{s'}\times\XX_{s'}}
    \dist^{\XX_{s'}}
    (x,x')
    \, d\nu_{z\,|\,s'}(x)\,d\nu_{w\,|\,s'}(x')\right)
    \, d\nu_{y\,|\,s}(z)
    \, d\mu_s(w) \\\nonumber
& \, 
+ \int_{W_s^{\delta}}
    \int_{\XX_s\setminus W_s^{\delta}}
     \dist_s(w,z) 
    \, d\nu_{y\,|\,s}(z)
    \, d\mu_s(w)+ \Psi(\delta,\beta\,|\,H,V) \\\nonumber
\le& \,
\int_{W_s^{\delta}}
    \int_{\XX_s\setminus W_s^{\delta}} 
    {\rm Var}(\nu_{z\,|\,s'},\nu_{w\,|\,s'})^{1/2}
    \, d\nu_{y\,|\,s}(z)
    \, d\mu_s(w)
\\\nonumber
&\, + \int_{W_s^{\delta}}
    \int_{\XX_s\setminus W_s^{\delta}}
     \dist_s(w,z) 
    \, d\nu_{y\,|\,s}(z)
    \, d\mu_s(w) +\Psi(\delta,\beta\,|\,H,V)\\\nonumber
\le& \, 
\int_{W_s^{\delta}}
    \int_{\XX_s\setminus W_s^{\delta}}
     \left(\sqrt{\dist_s^2(w,z) + H\delta}+\dist_s(w,z) \right)
     d\nu_{y\,|\,s}(z)
    \, d\mu_s(w) +\Psi(\delta,\beta\,|\,H,V) \\\nonumber
\le& \, \left(2\int_{W_s^{\delta}}
    \int_{\XX_s\setminus W_s^{\delta}}\left(2\dist_s^2(w,z) + H\delta\right)\, d\nu_{y\,|\,s}(z)
    \, d\mu_s(w) \right)^{\frac{1}{2}}\left(\int_{W_s^{\delta}}
    \int_{\XX_s\setminus W_s^{\delta}} d\nu_{y\,|\,s}(z)
    \, d\mu_s(w) \right)^{\frac{1}{2}} \\\nonumber
    &\, + \Psi(\delta,\beta\,|\,H,V)\\\nonumber
\le& \, 
2 \left\{
    \mu_s(W_s^{\delta})\cdot
    \nu_{y\,|\,s}(\XX_s\setminus W_s^{\delta})
\right\}^{1/2}
\left({\rm Var}(\nu_{y\,|\,s},\mu_s)+\tfrac{1}{2}H\delta\right)^{1/2} + \Psi(\delta,\beta\,|\,H,V),
\end{align}
where we have also applied the definition of $H$-concentration. Since
\[
    \int_{\XX_t} \nu_{z\,|\,s}(\XX_s\setminus W_s^{\delta})\, d\mu_t(z)
    = \mu_s(\XX_s\setminus W_s^{\delta}) < \Psi(\delta,\beta\,|\,H,V),
\]
taking $\Psi_1=\sqrt{\Psi}$ and $\Omega_t:=\{\nu_{\cdot\,|\,s}(\XX_s\setminus W_s^{\delta}) < \Psi_1\}$, we have
\[
    \mu_t(\Omega_t) \ge 1-\Psi_1(\delta,\beta\,|\,H,V),\quad
    \nu_{z\,|\,s}(\XX_s\setminus W_s^{\delta}) < \Psi_1(\delta,\beta\,|\,H,V)\ \text{ for each }\ z\in \Omega_t.
\]
It follows from (\ref{superextranonsense}) that
\begin{align*}
    &\int_{\XX_t} A_2(y) \, d\mu_t(y)
= \int_{\Omega_t} + \int_{\XX_t\setminus \Omega_t}\\
    \le&\  4\Psi_1(\delta,\beta\,|\,H,V)^{1/2} 
    \int_{\XX_t} \left({\rm Var}(\nu_{y\,|\,s},\mu_s) +\tfrac{1}{2}H\delta\right)^{1/2}d\mu_t(y)
    \\
    &\, + \Psi (\delta,\beta\,|\,H,V)
 + 4\int_{\XX_t\setminus \Omega_t}\left( {\rm Var}(\nu_{y\,|\,s},\mu_s)+\tfrac{1}{2}H\delta\right)^{\frac{1}{2}} d\mu_t(y) \\
\le& \ 
4\Psi_1(\delta,\beta\,|\,H,V)^{1/2}\left(
    \int_{\XX_t} {\rm Var}(\delta_y,\mu_t)\, d\mu_t(y)
    + H(t-s)+\tfrac{1}{2}H\delta
    \right)^{1/2} + \Psi(\delta,\beta\,|\,H,V) \\
&\ + 4\mu_t(\XX_t\setminus \Omega_t)^{1/2}
\left(
    \int_{\XX_t} {\rm Var}(\delta_y,\mu_t)\, d\mu_t(y)
    + H(t-s)+\tfrac{1}{2}H\delta
    \right)^{1/2} \\
\le & \ 
10\Psi_1(\delta,\beta\,|\,H,V)^{1/2}(V+HT+H\delta)^{1/2} + \Psi(\delta,\beta\,|\,H,V) < \Psi_2(\delta,\beta\,|\,H,V,T).
\end{align*}

Combining the estimates above, we have
\[
\int_{\XX^{\sigma}_{t}\times \XX_{t}} \dist^{Z_s}_{W_1}
    \left(
    \phi_{s*}\nu_{x^1\,|\,s'},
    \psi_{s*}\nu_{x^2\,|\,s}
    \right)
    d q_t(x^1,x^2) < \Psi_2(\delta,\beta\,|\,H,V,T).
\]
For a given $\epsilon>0$, we may first choose $\beta\le \bar \beta(\epsilon, H,V,T)$ and $\delta\le \bar \delta(\epsilon,H,V,T)$ so that $\Psi_2$ above satisfies
$\Psi_2< \epsilon.$
Then choose $\delta \le \bar \delta'(\beta,\epsilon, H,V,T)$,
such that
\[
    |E\cup(-H^{-1}\delta^3,0]| \le \frac{2A\delta}{\beta}+H^{-1}\delta^3 < \epsilon^2.
\]
(Recall that $A=V+H(T+1).$)
Then, by the definition of the $\mathbb{F}$-distance, we have
\[
\dist_{\mathbb{F}}
    \left(
    \left(
    \XX_{[-T,0)},
    (\mu_t)_{t\in [-T,0)}
    \right),
    \left(
    \XX_{[-T,0)}^{\sigma},
    (\mu_t^{\sigma})_{t\in [-T,0)}
    \right)
    \right) < \epsilon.
\]

\end{proof}

Using essentially the same arguments as above, we can show that the operation of parabolic scaling is continuous at scale $1$ (and hence at any scale). We do not need the following proposition in this article, and the detailed proof is left to the reader.

\begin{Proposition}
\label{prop: scaling continuity}
For any $H$, $V$, $T<\infty$ and $\epsilon>0,$ there is 
$\delta=\delta(H,V,T,\epsilon)>0$ suth that the following holds.
Let $(\XX,(\mu_t))$ be an $H$-concentrated metric flow pair over $[-T-1,0]$.
Suppose 
\[ \sup_t {\rm Var}(\mu_t)\le V.
\]
If  $|\lambda-1|\le \delta,$ 
then
\[
    \dist_{\mathbb{F}}
    \left(
    \left(
    \XX_{[-T,0]},
    (\mu_t)_{t\in [-T,0)}
    \right),
    \left(
    \XX_{[-T,0]}^{0,\lambda},
    (\mu_t^{0,\lambda})_{t\in [-T,0)}
    \right)
    \right) < \epsilon.
\]
\end{Proposition}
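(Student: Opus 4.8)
The plan is to mirror the proof of Proposition~\ref{prop: translation continuity}, which already supplies all of the measure-theoretic and metric-geometric input. Parabolic rescaling by $\lambda$ combines a time reparametrization $t\mapsto\lambda^{-2}t$ with a dilation of every distance by the factor $\lambda$: for $\lambda$ close to $1$ the first is a uniformly small time shift, and the second is handled by modifying the common metric space used in the correspondence.

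First I would reduce to $\lambda\le 1$. Parabolic rescalings of metric flow pairs compose, $\big(\XX^{0,\lambda}\big)^{0,\lambda^{-1}}=\XX$; the $H$-concentration condition is invariant under parabolic rescaling; $\sup_t{\rm Var}\big(\mu_t^{0,\lambda}\big)=\lambda^2\sup_t{\rm Var}(\mu_t)\le 2V$ once $\lambda$ is near $1$; the interval $\lambda^2[-T-1,0]$ over which $\XX^{0,\lambda}$ lives contains $[-T-1,0]$ when $\lambda\ge 1$; and $\dist_{\mathbb{F}}$ is symmetric. Hence applying the $\lambda\le1$ case of the proposition (with $V$ replaced by $2V$) to the pair $\big(\XX^{0,\lambda},(\mu_t^{0,\lambda})\big)$ restricted to $[-T-1,0]$ and to the rescaling factor $\lambda^{-1}\le1$ disposes of the range $\lambda>1$, at the cost of shrinking the final $\delta$. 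So from now on take $\lambda\le1$, put $t':=\lambda^{-2}t$ for $t\in[-T,0)$, and note $\sigma(t):=t-t'=|t|\,(\lambda^{-2}-1)\in[0,\delta_0]$ with $\delta_0:=T(\lambda^{-2}-1)\to0$ as $\lambda\to1$; for $|\lambda-1|$ small in terms of $T$ we have $t'\in[-T-1,0)$. I would then run the argument of Proposition~\ref{prop: translation continuity} with the constant shift $\sigma$ there replaced throughout by the variable but uniformly small shift $\sigma(t)$; since that argument only ever uses the bound $\sigma(t)\le\delta_0$, this costs nothing. In particular one keeps $v(t)={\rm Var}(\mu_t)+Ht$, defines the bad set $E=\{t\in[-T,0):v(t)-v(t-\delta_0)\ge\beta\}$ and bounds $|E|$ by the same Vitali argument, and, for $t\notin E\cup(-H^{-1}\delta_0^3,0]$, invokes \cite[Proposition~4.1]{Bam20b} for the mass-concentration bound and \cite[Proposition~4.14]{Bam20b} at the time pair $(t,t')$ to produce a closed $W_t\subset\XX_t$ with $\mu_t(\XX_t\setminus W_t)<\Psi$ together with the $W_1$-almost-isometry of the map $y\mapsto\nu_{y\,|\,t'}$ on $W_t$.

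The genuinely new ingredient is that the time-$t$ slice of $\XX^{0,\lambda}$ is $(\XX_{t'},\lambda\dist_{t'})$, not $(\XX_{t'},\dist_{t'})$, so the common space $Z_t$ in the correspondence must contain an isometric copy of the \emph{rescaled} slice. I would build $Z_t$ by gluing $(\XX_t,\dist_t)$ to $(\XX_{t'},\lambda\dist_{t'})$ along the graph of $y\mapsto\nu_{y\,|\,t'}$ over $W_t$, with a fixed offset of size $\Psi_0\asymp\Psi$ to absorb the $\Psi$-error in \cite[Proposition~4.14]{Bam20b}. Once $Z_t$ is in hand, the inclusions $\psi_t,\phi_t$ are (almost) isometric embeddings and the analogues of items (1)--(4) of the proof of Proposition~\ref{prop: translation continuity} hold, with all bounds picking up at worst a factor $\lambda\le1$ wherever $\phi_t$ dilates $W_1$-distances among measures on $\XX_{t'}$. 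Every subsequent estimate then transcribes: the extra terms produced by the rescaling all have the shape $\lambda\sqrt{H\sigma(\cdot)}\le\sqrt{H\delta_0}$ or $(1-\lambda)$ times $\Psi$-controlled quantities bounded, after integration against $\mu_t,\mu_{t'},\nu_{\cdot\,|\,\cdot}$, by a fixed multiple of $(V+HT)^{1/2}$, hence are all $\Psi(\delta_0\,|\,H,V,T)$. As in the last paragraph of that proof, one fixes $\beta$ small, then $\delta$ small enough that $|\lambda-1|\le\delta$ makes $\delta_0$, and hence $\Psi_2(\delta_0,\beta\,|\,H,V,T)$ and $\big|E\cup(-H^{-1}\delta_0^3,0]\big|$, all as small as required, and reads off $\dist_{\mathbb{F}}\big(\XX_{[-T,0]},\XX^{0,\lambda}_{[-T,0]}\big)<\epsilon$ from the definition of the $\mathbb{F}$-distance.

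I expect the main obstacle to be precisely the construction of $Z_t$: one must glue $(\XX_t,\dist_t)$ to the shrunk slice $(\XX_{t'},\lambda\dist_{t'})$ so that the conjugate-heat-kernel map stays $W_1$-almost-isometric while neither factor develops a shortcut through the other --- a tension absent when $\lambda=1$, since the almost-isometry of \cite[Proposition~4.14]{Bam20b} is stated with the \emph{unrescaled} $W_1$-metric, whereas the $\XX_{t'}$-factor now carries $\lambda\dist_{t'}$. Either one produces a gluing (with the offset $\Psi_0$ chosen uniformly in $t$) reconciling the two scales while restricting to the correct metric on each factor, or one works with $\mathbb{F}$-distances up to $\Psi$-distortion and absorbs the loss at the end; either way, everything downstream is a routine transcription of the proof of Proposition~\ref{prop: translation continuity}, with $\sigma(t)\le\delta_0$ and $\lambda\le1$ in place of the constant small shift there.
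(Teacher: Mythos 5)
The paper does not supply a proof of Proposition~\ref{prop: scaling continuity}; it remarks only that the argument is ``essentially the same'' as for Proposition~\ref{prop: translation continuity} and leaves the details to the reader. Your proposal follows precisely that intended route: reduce to $\lambda\le 1$, replace the constant shift $\sigma$ by the variable shift $\sigma(t)=t(1-\lambda^{-2})\le\delta_0$, invoke \cite[Propositions~4.1 and~4.14]{Bam20b} on the set of good times, and transcribe the $I_1$, $I_2$, $A_1$, $A_2$ estimates. So the strategy matches what the paper has in mind, and you have correctly isolated what is new compared to Proposition~\ref{prop: translation continuity}: the time-$t$ slice of $\XX^{0,\lambda}$ carries the \emph{rescaled} metric $\lambda\dist_{t'}$, so the correspondence must host an isometric copy of $(\XX_{t'},\lambda\dist_{t'})$, not of $(\XX_{t'},\dist_{t'})$.

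Where the proposal has a genuine gap is exactly where you flag uncertainty: the gluing with a \emph{fixed} offset $\Psi_0\asymp\Psi$ does not, as stated, yield a metric when the slices are unbounded. If one glues $(\XX_t,\dist_t)$ to $(\XX_{t'},\lambda\dist_{t'})$ along $y\mapsto\nu_{y\,|\,t'}$ over all of $W_t$, the triangle inequality forces the offset $c$ to satisfy $2c\ge \Psi+(1-\lambda)\,\dist^{\XX_{t'}}_{W_1}(\nu_{w_1\,|\,t'},\nu_{w_2\,|\,t'})$ for all $w_1,w_2\in W_t$, and the right side grows with $\operatorname{diam}(W_t)$, which need not be bounded in terms of $\Psi$. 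So ``a fixed offset of size $\Psi_0\asymp\Psi$'' cannot work without an additional restriction. The missing step is to first intersect $W_t$ with a ball $B_{\dist_t}(z_t,D)$ about an $H$-center, losing at most $V/D^2$ of $\mu_t$-mass by Chebyshev's inequality and the variance bound, and then choose $D\sim\sqrt{V/\Psi}$; the offset needed is then $\Psi_0\sim \Psi + (1-\lambda)\sqrt{V/\Psi}$, which is $\lesssim\Psi$ once $|1-\lambda|\lesssim\Psi^{3/2}V^{-1/2}$. This costs only a further shrinking of $\delta$, and with this modification the rest of your transcription of the $I_1$, $I_2$ estimates goes through as described. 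Your alternate fallback (``work with $\mathbb{F}$-distances up to $\Psi$-distortion and absorb the loss'') is not available: the definition of $\dist_{\mathbb{F}}$ requires genuine isometric embeddings into the $Z_t$, so the correspondence has to be built honestly, as above.

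One small remark on the reduction to $\lambda\le 1$: the identity $(\XX^{0,\lambda})^{0,\lambda^{-1}}=\XX$ and the scaling behavior of $H$-concentration and variance are correct, and the domain containment $\lambda^2[-T-1,0]\supset[-T-1,0]$ for $\lambda\ge 1$ is what makes the restriction well-defined, so this part is fine, though it is worth noting explicitly that the final $\delta$ must be shrunk to accommodate the worsened variance bound $2V$.
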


\begin{Proposition} \label{prop: local continuity}
For any $H$, $T<\infty$ and $\epsilon>0,$ there is 
$\delta=\delta(\epsilon,H,T)>0$ such that the following holds.
Let $\XX$ be an $H$-concentrated metric flow over $(-\infty,1)$
and $x_0\in \XX_{0}.$ 
If $\sigma\in (0,\delta)$ and
$y_0\in \PP^{*}(x_0;\delta)\cap \XX_{-\sigma},$
then
\[
    \dist_{\mathbb{F}}
    \left(
    \left(
    \XX_{[-T,0]},
    (\nu_{x_0\,|\,t})_{t\in [-T,0]}
    \right),
    \left(
    \XX_{[-T,0]}^{\sigma},
    (\nu_{y_0\,|\,t}^{\sigma})_{t\in [-T,0]}
    \right)
    \right) < \epsilon.
\]
Here $\PP^*$ (as well as $\PP^{*+}$ and $\PP^{*-}$ in the proof below) is the $W_1$-parabolic neighborhood defined in \cite[Definition 3.38, Definition 3.39]{Bam20b}.
\end{Proposition}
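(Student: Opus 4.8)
The plan is to reduce the statement to Proposition~\ref{prop: translation continuity} together with the monotonicity of the $W_1$-Wasserstein distance along conjugate heat flows, chained by the triangle inequality for $\dist_{\mathbb{F}}$. The key observation is that $\XX^{\sigma}$ differs from $\XX$ only by a time shift, so one should first ``absorb'' that shift with Proposition~\ref{prop: translation continuity}, after which the task becomes that of moving the base point \emph{within a single fixed metric flow}, which is controlled purely by $W_1$-monotonicity.

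First I would note that, since $\XX$ is $H$-concentrated and $x_0\in\XX_0$, the conjugate heat kernel $(\nu_{x_0\,|\,t})_{t\in[-T-1,0]}$ is a conjugate heat flow on $\XX_{[-T-1,0]}$ with ${\rm Var}(\nu_{x_0\,|\,t})\le H|t|\le H(T+1)$ by \cite[Proposition 3.34]{Bam20b}. Applying Proposition~\ref{prop: translation continuity} with $V=H(T+1)$ and with $\epsilon/2$ in place of $\epsilon$ produces $\delta_1=\delta_1(\epsilon,H,T)>0$ such that for all $0\le\sigma\le\delta_1$,
\[
    \dist_{\mathbb{F}}\left(\left(\XX_{[-T,0]},(\nu_{x_0\,|\,t})_{t\in[-T,0]}\right),\left(\XX^{\sigma}_{[-T,0]},(\nu^{\sigma}_{x_0\,|\,t})_{t\in[-T,0]}\right)\right)<\epsilon/2 .
\]
(The remark following Proposition~\ref{prop: translation continuity} lets us read this over the closed interval $[-T,0]$.) In $\XX^{\sigma}$ the point $x_0$ sits in the slice $\XX^{\sigma}_{\sigma}=\XX_0$, and $(\nu^{\sigma}_{x_0\,|\,t})_{t\le 0}$ is the conjugate heat flow on $\XX^{\sigma}$ over $(-\infty,0]$ determined by $\nu^{\sigma}_{x_0\,|\,0}=\nu_{x_0\,|\,-\sigma}$.

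Next I would compare, on the \emph{single} metric flow $\XX^{\sigma}$, the two conjugate heat flows $(\nu^{\sigma}_{x_0\,|\,t})_{t\in[-T,0]}$ and $(\nu^{\sigma}_{y_0\,|\,t})_{t\in[-T,0]}$. Since $y_0\in\PP^{*}(x_0;\delta)\cap\XX_{-\sigma}$ lies in a past time slice of $x_0$, the defining property of $\PP^{*-}$ (\cite[Definition 3.38, Definition 3.39]{Bam20b}) gives $\dist^{\XX_{-\sigma}}_{W_1}(\nu_{x_0\,|\,-\sigma},\delta_{y_0})<\delta$, i.e.\ $\dist^{\XX^{\sigma}_{0}}_{W_1}(\nu^{\sigma}_{x_0\,|\,0},\nu^{\sigma}_{y_0\,|\,0})<\delta$; by the $W_1$-monotonicity along conjugate heat flows (a consequence of \cite[Proposition 3.24(b)]{Bam20b}, applied after writing a conjugate heat flow as a superposition of conjugate heat kernels) this propagates to $\dist^{\XX^{\sigma}_{t}}_{W_1}(\nu^{\sigma}_{x_0\,|\,t},\nu^{\sigma}_{y_0\,|\,t})<\delta$ for all $t\in[-T,0]$. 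I would then use the trivial correspondence $\mathfrak{C}=\big((\XX^{\sigma}_{t},\dist^{\XX^{\sigma}_{t}}),\mathrm{id},\mathrm{id}\big)_{t\in[-T,0]}$ and, for each $t$, an optimal $W_1$-coupling $q_t\in\Pi(\nu^{\sigma}_{x_0\,|\,t},\nu^{\sigma}_{y_0\,|\,t})$; for $s\le t$ and $x^1,x^2\in\XX^{\sigma}_{t}$, $W_1$-monotonicity gives $\dist^{\XX^{\sigma}_{s}}_{W_1}(\nu^{\sigma}_{x^1\,|\,s},\nu^{\sigma}_{x^2\,|\,s})\le\dist^{\XX^{\sigma}_{t}}_{W_1}(\delta_{x^1},\delta_{x^2})=\dist^{\XX^{\sigma}_{t}}(x^1,x^2)$, so
\[
    \int_{\XX^{\sigma}_{t}\times\XX^{\sigma}_{t}}\dist^{\XX^{\sigma}_{s}}_{W_1}\!\left(\nu^{\sigma}_{x^1\,|\,s},\nu^{\sigma}_{x^2\,|\,s}\right)dq_t(x^1,x^2)\le\dist^{\XX^{\sigma}_{t}}_{W_1}\!\left(\nu^{\sigma}_{x_0\,|\,t},\nu^{\sigma}_{y_0\,|\,t}\right)<\delta .
\]
By the definition of $\dist_{\mathbb{F}}$ this bounds $\dist_{\mathbb{F}}\big(\big(\XX^{\sigma}_{[-T,0]},(\nu^{\sigma}_{x_0\,|\,t})\big),\big(\XX^{\sigma}_{[-T,0]},(\nu^{\sigma}_{y_0\,|\,t})\big)\big)\le\delta$. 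Choosing $\delta$ small enough that $\sigma\le\delta_1$ whenever $y_0\in\PP^{*}(x_0;\delta)\cap\XX_{-\sigma}$ and that $\delta<\epsilon/2$, and combining the two estimates by the triangle inequality for $\dist_{\mathbb{F}}$, yields the claim.

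Both analytic ingredients are already available, so I expect the main obstacle to be the bookkeeping: confirming that the ``trivial correspondence plus optimal coupling'' really witnesses $\mathbb{F}$-closeness in the precise sense of \cite[Section 5]{Bam20b} --- in particular that one coupling $q_t$ can be used simultaneously for all $s\le t$, which is exactly why one couples optimally at the \emph{later} time $t$ --- and that the time-shift and interval conventions for $\XX^{\sigma}$, $\nu^{\sigma}_{x_0\,|\,t}$ and $\PP^{*-}$ line up (including the relation between the radius $\delta$ of the parabolic neighborhood, the size of $\sigma$, and the threshold $\delta_1$ coming from Step~1).
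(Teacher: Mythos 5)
Your proposal follows essentially the same route as the paper: absorb the time shift via Proposition~\ref{prop: translation continuity}, then compare the two conjugate heat flows based at $x_0$ and $y_0$ inside the single flow $\XX^{\sigma}$ using $W_1$-monotonicity, and conclude by the triangle inequality. Your third step is in fact a hand-unfolding of \cite[Lemma 5.19]{Bam20b}, which is what the paper cites for exactly this purpose, and the trivial-correspondence/optimal-coupling computation you give is the correct way to witness the $\mathbb{F}$-bound.

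One step is misstated, though in a repairable way. The defining property of $y_0\in \PP^{*-}(x_0;\delta)$ is a $W_1$-bound at the \emph{bottom} time of the parabolic neighborhood, namely $\dist^{\XX_{-\delta^2}}_{W_1}(\nu_{x_0\,|\,-\delta^2},\nu_{y_0\,|\,-\delta^2})<\delta$; it is not the bound $\dist^{\XX_{-\sigma}}_{W_1}(\nu_{x_0\,|\,-\sigma},\delta_{y_0})<\delta$ that you assert. Since the $W_1$-distance between two conjugate heat flows is non-increasing as time decreases, the bound at $-\delta^2$ propagates to all $t\le -\delta^2$ but \emph{not} forward to $t\in(-\delta^2,-\sigma]$ --- monotonicity runs the wrong way there --- so your claim that $\dist^{\XX^{\sigma}_t}_{W_1}(\nu^{\sigma}_{x_0\,|\,t},\nu^{\sigma}_{y_0\,|\,t})<\delta$ holds for \emph{all} $t\in[-T,0]$ is not justified near $t=0$. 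The fix is exactly what the paper's appeal to \cite[Lemma 5.19]{Bam20b} encodes: run your coupling argument only for $s\le t$ in $[-T,-\delta^2]$ and throw the uncontrolled interval $(-\delta^2,0]$, of measure $\delta^2<(\epsilon/2)^2$, into the exceptional set allowed by the definition of $\dist_{\mathbb{F}}$. With that adjustment your argument is complete and coincides with the paper's.
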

\begin{proof}
We may assume that $y_0\in \PP^{*-}(x_0;\delta)$ because if 
$y_0\in \PP^{*+}(x_0;\delta),$ then $x_0\in \PP^{*-}(y_0;\delta)$
and we switch the role of $x$ and $y$.

Let 
$\delta=\delta_{\ref{prop: translation continuity}}(H,H(T+1),T,\epsilon/2)>0$
be given by Proposition \ref{prop: translation continuity} such that
\[
    \dist_{\mathbb{F}}
    \left(
    \left(
    \XX_{[-T,0]},
    (\nu_{x_0\,|\,t})_{t\in [-T,0]}
    \right),
    \left(
    \XX_{[-T,0]}^{\sigma},
    (\nu_{x_0\,|\,t}^{\sigma})_{t\in [-T,0]}
    \right)
    \right) < \epsilon/2.
\]
By the monotonicity of the $W_1$-Weinssernstein distance  \cite[Proposition 3.24(b)]{Bam20b} and the definition of $\PP^{*-}(x_0,\delta)$,
for each $t\in [-T,-\delta^2],$ we have
\[
\dist^{\XX_t^{\sigma}}_{W_1}
(\nu_{x_0\,|\,t}^{\sigma},\nu_{y_0\,|\,t}^{\sigma})
\le \dist^{\XX_{-\delta^2}}_{W_1}
(\nu_{x_0\,|\,-\delta^2},\nu_{y_0\,|\,-\delta^2})
< \delta.
\]
By \cite[Lemma 5.19]{Bam20b}, if $\delta<\epsilon/2$, then we have
\[
    \dist_{\mathbb{F}}
    \left(
    \left(
    \XX_{[-T,0]}^{\sigma},
    (\nu_{x_0\,|\,t}^{\sigma})_{t\in [-T,0]}
    \right),
    \left(
    \XX_{[-T,0]}^{\sigma},
    (\nu_{y_0\,|\,t}^{\sigma})_{t\in [-T,0]}
    \right)
    \right) < \epsilon/2,
\]
The conclusion follows from the triangle inequality of $\dist_{\mathbb{F}}.$

\end{proof}

We are now ready to prove Theorem \ref{thm: rough tangent} and Theorem \ref{thm: tangent flows}.

\begin{proof}[Proof of Theorem \ref{thm: rough tangent} and Theorem \ref{thm: tangent flows}]
Suppose 
$x_0\in \XX_{t_0}$ and $y_0\in \XX_{s_0}$ with $s_0\le t_0.$
Suppose that $\lambda_j\to 0$
is a sequence such that 
\[
\left(
    \XX_{[-T,0]}^{-t_0,\lambda_j},
    (\nu_{x_0\,|\,t}^{-t_0,\lambda_j})_{t\in [-T,0]}
    \right)
\xrightarrow[j\to \infty]{\mathbb{F}} 
\left(
    \XX_{[-T,0]}^{\infty},
    (\nu_{x_{\max}\,|\,t}^{\infty})_{t\in [-T,0]}
    \right)
    \in \TT_{x_0},
\]
for each $T<\infty$.
\def \YY {\mathcal{Y}}

Suppose that $y_0\in \PP^{*}(x_0;\rho),$
for some $\rho<\infty.$ In fact, such a number $\rho$ exists because we may take $z_0$ to be an $H$-center of $x_0$ at time $s_0$ and take $\rho$ to be any large number so that  \[
\rho> \dist_{s_0}(y_0,z_0) + \sqrt{H(t_0-s_0)} + \sqrt{t_0-s_0}.
\]
Then
\begin{align*}
    &\dist_{W_1}^{\XX_{t_0-\rho^2}}(\nu_{y_0\,|\,t_0-\rho^2},\nu_{x_0\,|\,t_0-\rho^2})
    \le \dist_{W_1}^{\XX_{s_0}}(\delta_{y_0},\nu_{x_0\,|\,s_0})\\
    \le&\  \dist_{s_0}(y_0,z_0)
    + \dist_{W_1}^{\XX_{s_0}}(\delta_{z_0},\nu_{x_0\,|\,s_0})
    \le \dist_{s_0}(y_0,z_0)
    + \sqrt{H(t_0-s_0)} < \rho.
\end{align*}
If we write
\[
    \tilde\XX^j := \XX^{-t_0,\lambda_j},\quad
    \tilde\nu^j_{x\,|\,t} := \nu^{-t_0,\lambda_j}_{x\,|\,t}
    = \nu_{x\,|\, t_0 + \lambda_j^{-2}t},
\]
then
\[
    x_0\in \tilde\XX^j_0,
    \quad
    \XX^{-s_0,\lambda_j}
    =(\tilde\XX^j)^{\lambda_j^2(t_0-s_0)},\quad
    y_0\in \PP^{*}_{\tilde\XX^j}(x_0;\lambda_j\rho)
    \cap \tilde\XX^j_{-\lambda_j^2(t_0-s_0)}.
\]
Note that $H$-concentration is invariant under parabolic rescaling. Hence for any $ \epsilon>0,$ by Proposition \ref{prop: local continuity}, there is $\delta>0$ depending on $\epsilon, T, H$ such that
if $j$ is sufficiently large so that $\lambda_j\rho<\delta,$ then
\begin{align*}
    &
    \dist_{\mathbb{F}}
    \left(
    \left(
    \XX_{[-T,0]}^{-t_0,\lambda_j},
    (\nu_{x_0\,|\,t}^{-t_0,\lambda_j})_{t\in [-T,0]}
    \right),
    \left(
    \XX_{[-T,0]}^{-s_0,\lambda_j},
    (\nu_{y_0\,|\,t}^{-s_0,\lambda_j})_{t\in [-T,0]}
    \right)
    \right)\\
= & \, \dist_{\mathbb{F}}
    \left(
    \left(
    \tilde\XX^j_{[-T,0]},
    (\tilde\nu^j_{x_0\,|\,t})_{t\in [-T,0]}
    \right),
    \left(
    \left(\tilde\XX^j_{[-T,0]}\right)^{\lambda_j^2(t_0-s_0)},
    \left(\left(\tilde \nu^j_{y_0\,|\,t}\right)^{\lambda_j^2(t_0-s_0)}\right)_{t\in [-T,0]}
    \right)
    \right) < \epsilon.
\end{align*}
It follows from taking $\epsilon\to 0$ that $\TT_{x_0}^\infty\subset \TT_{y_0}^\infty.$
The proof of the other direction is the same.
\end{proof}

\appendix
\appendixpage

\section{Proof of Theorem 2.3}

The proof is not essentially different from \cite[Lemma 3.2]{SW15}. Let $\varepsilon>0$ be an arbitrary small number and let $\delta\in(0,\frac{1}{2}\varepsilon)$ which will be specified later. We first of all assume that $\mathcal V_{2\varepsilon}^{k,\gamma}\subset \mathcal U$, where $\mathcal U$ is a regular neighborhood of $g_o$. Now, let $(M,g_t)_{t\in[0,T)}$ be a backward modified Ricci flow with $g_0\in \mathcal V_{\delta}^{k,\gamma}$ and $[0,T)$ the maximum interval of existence.

\begin{Lemma}\label{shorttimestability}
 There exists a small $\eta>0$,
 such that $g_t\in \mathcal V_{2\delta}^{k,\gamma}$ for all $t\in[0,\eta]$.
\end{Lemma}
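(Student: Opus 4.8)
The plan is a parabolic bootstrap followed by an openness (continuity) argument, exactly in the spirit of the corresponding short-time step in Sun--Wang. First I would record the uniform geometric bounds that $g_0\in\mathcal V^{k,\gamma}_{\delta}$ automatically carries: since $\mathcal V^{k,\gamma}_{\delta}$ is a $C^{k,\gamma}$-ball of radius $\delta<\tfrac12\varepsilon\le\tfrac12$ around the fixed smooth metric $g_o$, there are constants depending only on $g_o$ (and $k$), \emph{uniform over all admissible} $g_0$, with $\sup_M|\Rm_{g_0}|\le C_0$, $\|\Rm_{g_0}\|_{C^{k-2,\gamma}_{g_o}}\le C_0$, $\Vol(M,g_0)\ge v_0>0$, and $\operatorname{diam}(M,g_0)\le D_0$.

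Next I would reduce the curvature estimate to the genuine Ricci flow underlying the modified flow. By the discussion in Section~2.2, wherever the backward modified Ricci flow $(M,g_t)_{t\in[0,T)}$ exists it is obtained from a genuine Ricci flow $\hat g$ by composing with a one-parameter family of diffeomorphisms and a time-reparametrization, and a short computation shows that increasing $t$ corresponds to running $\hat g$ \emph{backward} in its own time, with the $t=0$ slice being $g_0$ itself. Since the bounds of the previous paragraph are invariant under diffeomorphism and rescaling, the backward pseudolocality theorem (Theorem~\ref{thm: backward pseudo}), applied at the slice corresponding to $t=0$, yields $\eta_1=\eta_1(g_o)>0$ and $C_1=C_1(g_o)$ with $\sup_M|\Rm_{g_t}|\le C_1$ for all $t\in[0,\min(\eta_1,T))$. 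Shi's local derivative estimates --- in the form that uses the $C^{k-2,\gamma}$ control of $g_0$, so as to remain valid down to $t=0$ --- then upgrade this to $\|\Rm_{g_t}\|_{C^{k-2,\gamma}_{g_o}}\le C_2(g_o)$ on $[0,\min(\eta_2,T))$ for some $0<\eta_2\le\eta_1$ depending only on $g_o$.

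With curvature under control I would close the argument by an openness scheme. Note $\mathcal V^{k,\gamma}_{2\delta}\subset\mathcal U$ because $2\delta<\varepsilon<2\varepsilon$. Let
\[
    \eta^*:=\sup\Big\{\eta\in\big(0,\min(\eta_2,T)\big)\ :\ g_s\in\mathcal V^{k,\gamma}_{2\delta}\ \text{ for all }\ s\in[0,\eta]\Big\};
\]
since $g_0\in\mathcal V^{k,\gamma}_{\delta}$ lies in the interior of $\mathcal V^{k,\gamma}_{2\delta}$ and $t\mapsto g_t$ is continuous, $\eta^*>0$. On $[0,\eta^*]$ we have $g_s\in\mathcal V^{k,\gamma}_{2\delta}\subset\mathcal U$, so $f_{g_s}$ is well defined and, since $P:g\mapsto f_g$ is analytic (hence locally Lipschitz) on $\mathcal U$, the quantity $\|\nabla^2 f_{g_s}\|_{C^{k,\gamma}_{g_o}}$ is bounded in terms of $g_o$ only. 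Combined with the curvature bounds this controls the right-hand side of $\partial_s g_s=2\big(\Ric_{g_s}+\nabla^2 f_{g_s}-\tfrac12 g_s\big)$, giving $\|\partial_s g_s\|_{C^{k,\gamma}_{g_o}}\le K=K(g_o)$ on $[0,\eta^*]$, hence $\|g_{\eta^*}-g_0\|_{C^{k,\gamma}_{g_o}}\le K\eta^*$. Shrinking the threshold once more so that $\eta_2\le\delta/(2K)$ forces $\|g_{\eta^*}-g_o\|_{C^{k,\gamma}_{g_o}}<\tfrac{\delta}{2}+\delta<2\delta$, i.e.\ $g_{\eta^*}$ lies in the interior of $\mathcal V^{k,\gamma}_{2\delta}$; by continuity this contradicts the maximality of $\eta^*$ unless $\eta^*=\min(\eta_2,T)$. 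Then $\eta:=\eta_2$ if $T>\eta_2$, and $\eta:=T/2$ otherwise, works.

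I expect the main obstacle to be the circular dependence inherent in the modified flow: the right-hand side of the equation --- and, a priori, the continuation of the flow --- only makes sense once $g_t$ is known to sit in the regular neighborhood $\mathcal U$ where $f_{g_t}$ is the unique minimizer and $g\mapsto f_g$ is well behaved; the openness step above is precisely the mechanism that breaks this circle, and it is where all the smallness thresholds get fixed. A secondary technical point is justifying the Shi-type estimates all the way down to $t=0$, which needs the version incorporating the initial $C^{k-2,\gamma}$-regularity of $g_0$ rather than the naive $t^{-\ell/2}$ blow-up bound; this is harmless here since $g_0\in\mathcal V^{k,\gamma}_{\delta}$ already carries that regularity.
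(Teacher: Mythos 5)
Your overall strategy is the same as the paper's: reduce via the pullback to a genuine (normalized) Ricci flow, invoke backward pseudolocality (Theorem~\ref{thm: backward pseudo}) to get a short-time curvature bound, upgrade with Shi's estimates, and close with an ODE/openness argument. The explicit $\eta^*$-openness scheme is a welcome clarification of a step the paper leaves implicit. However, there are two interlocking issues in the regularity bookkeeping that need repair.

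First, the worry that Shi's estimates need to be ``the version incorporating the initial $C^{k-2,\gamma}$-regularity of $g_0$'' is misplaced and in fact works against you. Under the backward modified flow, increasing $t$ corresponds to running the underlying Ricci flow $h_\tau$ \emph{backward} in its own time $\tau$. Backward pseudolocality gives $|\Rm_{h_\tau}|\le C$ on $[\tau_1,0]$ with $\tau_1<0$, and the modified-flow slice $t=0$ corresponds to $\tau=0$, which is the \emph{final} time of that interval, i.e.\ an interior point for Shi's smoothing estimates. Applying Shi from $\tau_1$ forward therefore yields bounds $\|\nabla^m_{h_\tau}\Rm_{h_\tau}\|\le C(m,|\tau_1|)$ for \emph{every} $m$ on, say, $[\tau_1/2,0]$, i.e.\ on $t\in[0,\eta_0/2]$, with no reference to the initial regularity of $g_0$ at all. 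This is precisely what the paper uses. By restricting yourself to the initial-regularity version, you cap the curvature bound at $C^{k-2,\gamma}$.

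Second, and consequently, your openness step does not close. With only $\|\Rm_{g_s}\|_{C^{k-2,\gamma}_{g_o}}\le C_2$ and $f_{g_s}=P(g_s)\in C^{k,\gamma}$ (so $\nabla^2 f_{g_s}\in C^{k-2,\gamma}$, not $C^{k,\gamma}$ as you wrote), the right-hand side of $\partial_s g_s=2(\Ric_{g_s}+\nabla^2 f_{g_s}-\tfrac12 g_s)$ is only controlled in $C^{k-2,\gamma}_{g_o}$. Integrating then gives $\|g_{\eta^*}-g_0\|_{C^{k-2,\gamma}_{g_o}}\le K\eta^*$, which does \emph{not} re-establish membership in the $C^{k,\gamma}$-ball $\mathcal V^{k,\gamma}_{2\delta}$; the openness hypothesis loses two derivatives per pass and the bootstrap fails. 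The paper's remedy is exactly to use the all-order Shi bounds just described (applied to the diffeomorphism-equivalent normalized flow $\tilde g_t$), and the accompanying all-order bounds on $f_{\tilde g_t}$ from the analyticity/elliptic regularity of $P$ applied to a now-smooth $\tilde g_t$; this gives $\|\Ric_{g_t}+\nabla^2 f_{g_t}-\tfrac12 g_t\|_{C^{l,\gamma}_{g_t}}\le C(l)$ for every $l$, in particular $l=k$, after which your ODE argument does close in $C^{k,\gamma}$.

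A minor related point: the bound $\|\Rm_{g_t}\|_{C^{k-2,\gamma}_{g_o}}\le C_2(g_o)$ is stated in step~3 as a free-standing estimate, but the $g_o$-norm of $\Rm_{g_t}$ is only comparable to the diffeomorphism-invariant norm $\|\Rm_{g_t}\|_{C^{m,\gamma}_{g_t}}$ once you already know $g_t$ is $C^{k,\gamma}$-close to $g_o$; as written, that estimate only makes sense inside the openness hypothesis of step~4, not before it. The paper sidesteps this by phrasing all intermediate estimates in the metric's own norm (which is diffeomorphism-invariant between $g_t$ and $\tilde g_t$) and converting to the $g_o$-norm only inside the final continuity argument.
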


\begin{proof}
Since the backward modified Ricci flow and the backward Ricci flow differ only by a one-parameter family of diffeomorphisms and a time scaling, we can consider 
\begin{eqnarray}\label{supralapsarianism}
\left\{\begin{array}{rl}
     \tilde g_t &=\ \psi_t^* g_t,  \\
     \partial_t \psi_t &=\ -\nabla_{g_t}f_{g_t}\circ\psi_t, \\
     \psi_0&=\ \operatorname{id}.
\end{array}\right.
\end{eqnarray}
Note that the existence of $g_t$ presupposes that $\nabla f_{g_t}$ is well-defined for all $t\in[0,T)$. Then $\tilde g$ is a normalized backward Ricci flow satisfying $\partial_t \tilde g_t=2(\Ric_{\tilde g_t}-\frac{1}{2}\tilde g_t)$ and $\tilde g_0=g_0$.
Applying Bamler's backward pseudolocality at $t=0$ to $\tilde g$, we have that, there is an $\eta_0>0$, such that
$$\left|\Rm_{\tilde g_t}\right|_{\tilde g_t}\leq C(\eta_0)\quad \text{ for all }\quad t\in[0,\eta_0].$$
By applying Shi's estimates, we also have that all derivatives of the curvature are bounded on $M\times[0,\frac{\eta_0}{2}]$ in terms of $C(\eta_0)$. Therefore, taking $\eta_1\leq \overline \eta_1(\eta_0,\delta)$, we have
$$\tilde g_t\in\mathcal V_{2\delta}^{k,\gamma}\quad\text{ for all }\quad t\in[0,\eta_1],$$
and consequently 
$$\|f_{g_t}\|_{C^{l,\gamma}_{g_t}}=\|f_{\tilde g_t}\|_{C^{l,\gamma}_{\tilde g_t}}\leq C(l) \quad\text{ for all }\quad t\in[0,\eta_1]\quad\text{ and for all }\quad l\in\mathbb{N}.$$
This further implies that
$$\left\|\Ric_{g_t}+\nabla^2_{g_t}f_{g_t}-\frac{1}{2}g_t\right\|_{C^{l,\gamma}_{g_t}}=\left\|\Ric_{\tilde g_t}+\nabla^2_{\tilde g_t}f_{\tilde g_t}-\frac{1}{2}\tilde g_t\right\|_{C^{l,\gamma}_{\tilde g_t}}\leq C(l),$$
for all $t\in[0,\eta_1]$ and for all $l\in\mathbb{N}$. Finally, if we take $\eta<\overline\eta(\delta,\eta_1)$ small enough, then the conclusion of the lemma follows.
\end{proof}

\begin{Lemma}\label{longtimecontinuity}
By taking $\delta<\overline\delta(\varepsilon)$ small enough, we have that, for all $T_0\in(0,T-\varepsilon]$, if $g_t\in \mathcal V_{\varepsilon}^{k,\gamma}$ for all $t\in[0,T_0)$, then $g_{T_0}\in\mathcal V_{\frac{\varepsilon}{2}}^{k,\gamma}$.
\end{Lemma}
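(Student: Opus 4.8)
The plan is to run the \L ojasiewicz argument of Sun--Wang in the backward direction, feeding in Bamler's backward pseudolocality (Theorem \ref{thm: backward pseudo}) in place of the forward parabolic smoothing that is available in Theorem \ref{neighborhoods}.

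\emph{Step 1: the $L^2$--displacement along the flow is $O(\delta^{1-\alpha})$.} Since by hypothesis $g_t\in\V^{k,\gamma}_\varepsilon\subset\mathcal U$ for all $t\in[0,T_0)$, the potential $f_{g_t}$ is well defined along the flow and the backward modified Ricci flow can be written as $\partial_t g_t=-\nabla\mu_{g_t}$, so $\frac{d}{dt}\mu_{g_t}=-\|\nabla\mu_{g_t}\|^2_{L^2_{g_t}}\le 0$. Put $h(t):=\mu_{g_t}-\mu_{g_o}$. By the standing assumption $\mu_{g_t}\ge\mu_{g_o}$ we have $h\ge 0$ and $h$ nonincreasing, and since $g\mapsto\mu_g$ is analytic on $\mathcal U$ and $g_0\in\V^{k,\gamma}_\delta$, we have $0\le h(0)\le C\delta$. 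Wherever $h>0$, Theorem \ref{Lojaciewicz} gives $\|\nabla\mu_{g_t}\|_{L^2_{g_t}}\ge C\,h(t)^\alpha$, hence
\[
\frac{d}{dt}\,h(t)^{1-\alpha}=-(1-\alpha)\,h(t)^{-\alpha}\,\|\nabla\mu_{g_t}\|^2_{L^2_{g_t}}\le -(1-\alpha)\,C\,\|\nabla\mu_{g_t}\|_{L^2_{g_t}}.
\]
Integrating, and using that $g_t$ and $g_o$ are uniformly equivalent, I get, for all $0\le t_1\le t_2<T_0$,
\[
\|g_{t_2}-g_{t_1}\|_{L^2_{g_o}}\le\int_{t_1}^{t_2}\|\partial_t g_t\|_{L^2_{g_o}}\,dt\le C\bigl(h(t_1)^{1-\alpha}-h(t_2)^{1-\alpha}\bigr),
\]
so that the total displacement $\int_0^{T_0}\|\partial_t g_t\|_{L^2_{g_o}}\,dt\le C\,h(0)^{1-\alpha}\le C\,\delta^{1-\alpha}$ is small, \emph{independently of} $T_0$.

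\emph{Step 2: upgrade from $L^2$ to $C^{k,\gamma}$.} Because every time-slice lies in $\V^{k,\gamma}_\varepsilon$, each $(M,g_t)$ has curvature, volume and diameter uniformly controlled (close to those of $g_o$); applying the forward and backward pseudolocality theorems (Theorems \ref{thm: forward pseudo} and \ref{thm: backward pseudo}) at each time to the (time-reversed) Ricci flow underlying the backward modified flow, together with Shi's estimates, produces a uniform bound $\|g_t\|_{C^{k+m,\gamma}_{g_o}}\le B$ on each compact subinterval of $(0,T_0)$, with $m,B$ depending only on $\varepsilon$ and the geometry of $g_o$. On a unit interval $[j,j+1]$ away from the endpoints, interpolation between $C^{k+m,\gamma}_{g_o}$ and $L^2_{g_o}$ and then Step 1 give, with $\theta=\theta(m)\in(0,1)$,
\[
\|g_{j+1}-g_j\|_{C^{k,\gamma}_{g_o}}\le C\,B^{\theta}\,\|g_{j+1}-g_j\|_{L^2_{g_o}}^{1-\theta}\le C\,B^{\theta}\bigl(h(j)^{1-\alpha}-h(j+1)^{1-\alpha}\bigr)^{1-\theta}.
\]
Summing over $j$ and using that $h(t)$ decays along the flow at the rate forced by the \L ojasiewicz ODE of Step 1 (exponentially if $\alpha=\tfrac12$, and like $t^{-1/(2\alpha-1)}$ otherwise), and choosing $m$ large enough, depending on $\alpha$, so that the resulting series converges, I obtain $\sum_j\|g_{j+1}-g_j\|_{C^{k,\gamma}_{g_o}}\le C\,\delta^{c}$ for some $c=c(\alpha)>0$. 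Combining this with $\|g_0-g_o\|_{C^{k,\gamma}_{g_o}}\le\delta$ and treating the two short windows adjacent to $t=0$ and $t=T_0$ — where only the $C^{k,\gamma}$--bound is directly available — by the short-time stability of Lemma \ref{shorttimestability} (after first interpolating at an interior time to see that $g_t$ is already $O(\delta^{c})$-close to $g_o$ there), one concludes $\|g_{T_0}-g_o\|_{C^{k,\gamma}_{g_o}}\le C\,\delta^{c'}$ for some $c'>0$; choosing $\delta=\overline\delta(\varepsilon)$ small enough makes this $<\varepsilon/2$.

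\emph{Where the work is.} The genuinely delicate step is the regularity bookkeeping in Step 2: unlike in the forward setting of Theorem \ref{neighborhoods}, the backward modified Ricci flow is backward--parabolic, so no smoothing is available and all higher-order control must come from Bamler's backward pseudolocality (with forward pseudolocality covering the window adjacent to $T_0$); one must keep careful track of how many Hölder derivatives are controlled on which subinterval, the endpoints being the worst case. A second point to watch is that a naive bound on $\int_0^{T_0}\|\partial_t g_t\|_{C^{k,\gamma}_{g_o}}\,dt$ would pick up a factor growing in $T_0$ when converting the $L^2$ displacement into a $C^{k,\gamma}$ displacement; this is what the decay of $\mu_{g_t}-\mu_{g_o}$ absorbs, and it is the reason the \L ojasiewicz exponent $\alpha$ feeds into the final rate.
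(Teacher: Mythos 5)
Your overall strategy is right — backward pseudolocality plus Shi for higher-order bounds along the flow, and the \L{}ojasiewicz inequality to control the drift — but you take a genuinely different route through the interpolation step than the paper does, and the route you chose creates a nontrivial summation problem that you acknowledge but do not actually close.

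\textbf{What the paper does differently.} You bound the $L^2$ displacement by integrating $\tfrac{d}{dt}h^{1-\alpha}\le -(1-\alpha)C\|\partial_t g_t\|_{L^2}$, then chop $[0,T_0]$ into unit intervals, interpolate each increment $g_{j+1}-g_j$ between $L^2$ and $C^{k+m,\gamma}$ to get $\|g_{j+1}-g_j\|_{C^{k,\gamma}}\le C\,a_j^{1-\theta}$ with $a_j=h(j)^{1-\alpha}-h(j+1)^{1-\alpha}$, and sum. The paper instead interpolates the \emph{velocity} $\partial_t g_t$ directly, $\|\partial_t g_t\|_{C^{k,\gamma}_{g_t}}\le C\|\partial_t g_t\|_{L^2_{g_t}}^{\beta}$ with $\beta\in(2-1/\alpha,1)$ chosen so that the \L{}ojasiewicz ODE becomes $\tfrac{d}{dt}(\mu_{g_t}-\mu_{g_o})^{1-(2-\beta)\alpha}\le -C\|\partial_t g_t\|_{L^2}^{\beta}$, and then the total $C^{k,\gamma}$ displacement $\int_0^{T_0}\|\partial_t g_t\|_{C^{k,\gamma}}\,dt$ is telescoped in one line, $\le C(\mu_{g_0}-\mu_{g_o})^{1-(2-\beta)\alpha}$, with no summation at all. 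The exponent $1-(2-\beta)\alpha>0$ is tailored exactly so the interpolation power $\beta$ and the \L{}ojasiewicz power $\alpha$ cancel against each other inside a single Lyapunov function.

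\textbf{Where your version is delicate.} The summation $\sum_j a_j^{1-\theta}$ is \emph{not} controlled by the telescoping bound $\sum_j a_j\le h(0)^{1-\alpha}\le C\delta^{1-\alpha}$ alone: if all $a_j$ were of comparable size, $\sum_j a_j^{1-\theta}$ would grow like $N^{\theta}$ in the number of unit intervals $N$, and $T_0$ is unbounded. To make this work one really must feed in the full decay rate $h(t)\lesssim (h(0)^{1-2\alpha}+C t)^{-1/(2\alpha-1)}$, decompose the sum dyadically, use the power-mean inequality blockwise, and verify both that the resulting series converges (which forces $\theta<(1-\alpha)/\alpha$, so $m=m(\alpha)$ must indeed be large when $\alpha$ is close to $1$) \emph{and} that the sum scales as a positive power of $\delta$ rather than merely being finite. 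You flag the convergence issue but state the $\delta^c$ bound without proof, and the endpoint windows require one further patch with Lemma \ref{shorttimestability}. None of this is irreparable, but it is extra machinery that the paper's choice of Lyapunov exponent avoids entirely; if you want to keep the interval-chopping route you should write out the dyadic estimate explicitly, since it is the place where the exponents $\alpha$, $\theta$, and the smallness in $\delta$ must all be reconciled.
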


\begin{proof}
Since $T_0< T$, we may apply Bamler's backward pseudolocality theorem as in the proof of the above lemma, that is, for all $t_0\in[0,T_0)$, we may define a flow $\tilde g$ as in (\ref{supralapsarianism}) with $\psi_{t_0}=\operatorname{id}$ instead, and apply Theorem \ref{thm: backward pseudo} to $\tilde g_t$ at $t_0$. Then we can find a positive number $\eta>0$, independent of $T_0$ and $t_0$ (note that since $g_{t_0}$ is very close to $g_o$, hence we can estimate all of its geometric quantities using $g_o$, and this is why $\eta$ is independent of $T_0$ and $t_0$), such that, for each $t_0\in[0,T_0)$, we have
$$\left|\nabla^l\Rm_{g_t}\right|_{g_t}\leq C(l),\quad \left\|\Ric_{g_t}+\nabla_{g_t}^2f_{g_t}-\frac{1}{2}g_t\right\|_{C^{l,\gamma}_{g_t}}\leq C(l),$$
for all $t\in[t_0,t_0+\eta]$ and $l\in\mathbb{N}$, where the constants $C(l)$ are also independent of $T_0$ and $t_0$. Note that here we have also applied $\mathcal V_{2\varepsilon}^{k,\gamma}\subset\mathcal U$ as in the above lemma. It is easily observed that
$$g_t\in\mathcal V_{2\varepsilon}^{k,\gamma}\quad\text{ for all }\quad t\in[0,T_0].$$

As in  \cite[Lemma 3.2]{SW15}, for all integer $p\ge 1$ and for all $t\in[0,T_0)$, we obtain, by the standard interpolation inequality, that,
\begin{eqnarray*}
    \left\|
    \partial_t g_t
    \right\|_{W^{p,2}_{g_t}}
    &\le& C(p) \left\|
    \partial_t g_t
    \right\|_{L^{2}_{g_t}}^{\beta}\left\|\Ric_{g_t}+\nabla_{g_t}^2f_{g_t}-\frac{1}{2}g_t\right\|^{1-\beta}_{W^{N(p), 2}_{g_t}}
    \\
    &\leq &C(p)\left\|
    \partial_t g_t
    \right\|_{L^{2}_{g_t}}^{\beta},
\end{eqnarray*}
where $\beta\in( 2- 1/\alpha, 1)$ and $p\ll N(p)\in\mathbb{N}$.  Here $\alpha\in[\frac{1}{2},1)$  is the constant in Theorem \ref{Lojaciewicz}. 
Since the Sobolev constants are uniformly bounded for metrics in $\mathcal{V}^{k,\gamma}_{2\varepsilon}$, we have
\[
    \left\|
    \partial_t g_t
    \right\|_{C^{l,\gamma}_{g_t}}
    \le C(l)\left\|
    \partial_t g_t
    \right\|_{W^{p, 2}_{g_t}}
    \le C(l) \left\|
    \partial_t g_t
    \right\|_{L^{2}_{g_t}}^{\beta}\quad\text{ for all }\quad t\in[0,T_0].
\]
By the definition of backward modified Ricci flow and Theorem \ref{Lojaciewicz}, we have
\begin{align*}
    \frac{d}{dt}  \left( \mu_{g_t} - \mu_{g_o}\right)^{1-(2-\beta)\alpha}
    &= -2 (1-(2-\beta)\alpha) \left( \mu_{g_t} - \mu_{g_o}\right)^{-(2-\beta)\alpha} \|\nabla \mu_{g_t}\|^2_{L^2_{g_t}}\\
    & \le - C(\beta) \left\|
    \partial_t g_t
    \right\|_{L^{2}_{g_t}}^{\beta}.
\end{align*}
Then
\[
\left\|
    \partial_t g_t
    \right\|_{C^{l,\gamma}_{g_t}}
    \le -C(\beta,l)  \frac{d}{dt}  \left( \mu_{g_t} - \mu_{g_o}\right)^{1-(2-\beta)\alpha}.
\]
Hence, for all $t\in (0,T_0]$, we have
\begin{align}\label{infralapsarianism}
    \| g_t - g_0\|_{C^{k,\gamma}_{g_o}}
    &\le C\int_{0}^{t} \left\|
    \partial_s g_s
    \right\|_{C^{k,\gamma}_{g_s}}\, ds\\\nonumber
    &\le C(\beta) \left( \mu_{g_0}- \mu_{g_o}\right)^{1-(2-\beta)\alpha}\\\nonumber
    & < \varepsilon/4,
\end{align}
if we take $\delta \le \bar \delta(\varepsilon)$. Here we also applied the fact that the $C^{k,\gamma}$ norms defined using all metrics in $\mathcal V^{k,\gamma}_{2\varepsilon}$ are equivalent. This finishes the proof of the claim. 
\end{proof}

Let $[0,T_0)\subset[0,T-\varepsilon)$ be the maximum interval such that $g_t\in\mathcal{V}_{\varepsilon}^{k,\gamma}$ for all $t\in[0,T_0)$. If $T_0< T-\varepsilon$, then by Lemma \ref{longtimecontinuity} above, we have $g_{T_0}\in\mathcal{V}^{k,\gamma}_{\frac{\varepsilon}{2}}$. Using the argument as in the proof of Lemma \ref{shorttimestability}, we can find a positive number $\eta>0$, depending on $\varepsilon$ and $T_0$, such that $g_t\in\mathcal{V}_{\varepsilon}^{k,\gamma}$ for all $t\in[0,T_0+\eta]$, and this contradicts the definition of $T_0$. Hence we have $g_t\in\mathcal{V}_{\varepsilon}^{k,\gamma}$ for all $t\in[0,T-\varepsilon)$.

If $T=\infty$, then we may compute using the \L ojasiewicz inequality (Theorem \ref{Lojaciewicz}):
\begin{eqnarray*}
\frac{d}{dt}\left(\mu_{g_t}-\mu_{g_o}\right)^{1-2\alpha}&=&-2(1-2\alpha)\left(\mu_{g_t}-\mu_{g_o}\right)^{-2\alpha}\left\|\nabla\mu_{g_t}\right\|^2_{L^2_{g_t}}
\\
&\geq& 2(2\alpha-1)\left(\mu_{g_t}-\mu_{g_o}\right)^{-2\alpha}\cdot C\left(\mu_{g_t}-\mu_{g_o}\right)^{2\alpha}
\\
&\geq& 2C(2\alpha-1).
\end{eqnarray*}
Since $\mu_{g_t}\geq \mu_{g_o}$ for all $t\in[0,\infty)$, by integrating the above inequality, we have
\begin{equation}\label{mu decay}
\mu_{g_t}-\mu_{g_o}\leq C(\alpha)t^{-\frac{1}{2\alpha-1}}\quad\text{ for all }\quad t>0.
\end{equation}
Hence, by (\ref{infralapsarianism}), we have
\begin{equation*}
    \| g_{t_2} - g_{t_1}\|_{C^{k,\gamma}_{g_o}}\leq C\left( \mu_{g_{t_1}}- \mu_{g_o}\right)^{1-(2-\beta)\alpha}\leq C t_1^{-\frac{1-(2-\beta)\alpha}{2\alpha-1}}\quad\text{ for all }\quad 0<t_1<t_2<\infty.
\end{equation*}
This finishes the proof of the theorem.

\section{\texorpdfstring{$\nu$}{nu}-functional of closed manifolds with positive scalar curvature}

The following proposition is a straightforward and well-understood consequence of the Sobolev inequality. Since we have not found it in the literature, we shall include the proof for the convenience of the reader.

\begin{Proposition}\label{lowerbound of nu}
Let $(M^n,g)$ be a closed Riemannian manifold satisfying the following Sobolev inequality
\begin{eqnarray*}
\left(\int_M|u|^{\frac{2n}{n-2}}dg\right)^{\frac{n-2}{n}}\leq A\int_M|\nabla u|^2dg+B\int_M u^2dg\quad\text{ for all }\quad u\in W^{1,2}(M).
\end{eqnarray*}
Furthermore, assume $R\geq c_0>0$ everywhere on $M$. Then we have
$$\nu(g)\geq -C,$$
where $C$ is a constant depending only on $n$, $A$, $B$, and $c_0$.
\end{Proposition}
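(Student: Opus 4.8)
The plan is to reduce $\W(g,f,\tau)$ to a one-variable minimization via the usual substitution $w := (4\pi\tau)^{-n/4}e^{-f/2}$, under which the unit-mass constraint becomes $\|w\|_{L^2_g}^2=1$. A direct computation (using $\nabla f=-2\nabla w/w$) gives
$$\W(g,f,\tau)=4\tau\int_M|\nabla w|^2\,dg+\tau\int_M Rw^2\,dg-\int_M w^2\log w^2\,dg-\tfrac n2\log(4\pi\tau)-n.$$
The only term not manifestly under control is the entropy $-\int_M w^2\log w^2\,dg$, and the Sobolev hypothesis is precisely what is needed to bound it. First I would apply Jensen's inequality with respect to the probability measure $w^2\,dg$ to the function $\log\big(w^{4/(n-2)}\big)$, obtaining $\int_M w^2\log w^2\,dg\le \tfrac n2\log\|w\|_{L^{2n/(n-2)}_g}^2$, and then feed in the assumed Sobolev inequality to get $\int_M w^2\log w^2\,dg\le \tfrac n2\log\big(A\|\nabla w\|_{L^2_g}^2+B\big)$; here one may enlarge $B$ at the outset so that $A,B>0$, and if $w\notin W^{1,2}$ the term $4\tau\|\nabla w\|_{L^2_g}^2$ forces the bound trivially.

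Combining this with $R\ge c_0>0$, so $\tau\int_M Rw^2\,dg\ge c_0\tau$, and writing $x:=\|\nabla w\|_{L^2_g}^2\ge 0$, we arrive at the $f$-free lower bound
$$\mu(g,\tau)\ \ge\ \inf_{x\ge 0}\Big(4\tau x-\tfrac n2\log(Ax+B)\Big)+c_0\tau-\tfrac n2\log(4\pi\tau)-n;$$
note that no minimizer of $\W(g,\cdot,\tau)$ need be invoked. It then remains to carry out two elementary scalar estimates. For fixed $\tau$ the function $x\mapsto 4\tau x-\tfrac n2\log(Ax+B)$ is minimized either at the interior point with $Ax+B=nA/(8\tau)$ (when $\tau\le nA/(8B)$) or at $x=0$ (when $\tau>nA/(8B)$). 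In the first regime the $\tau$-dependent logarithms cancel, $-\tfrac n2\log\!\big(\tfrac{nA}{8\tau}\big)-\tfrac n2\log(4\pi\tau)=-\tfrac n2\log\!\big(\tfrac{n\pi A}{2}\big)$, leaving a constant plus a term linear in $\tau$ over a bounded $\tau$-interval, hence bounded below; in the second regime one is left with $c_0\tau-\tfrac n2\log(4\pi\tau)$, which is bounded below over all $\tau>0$. Either way $\mu(g,\tau)\ge -C(n,A,B,c_0)$ uniformly in $\tau$, and taking the infimum over $\tau>0$ yields $\nu(g)\ge -C$.

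There is no serious obstacle here — this is the classical argument that an $L^2$ Sobolev inequality together with a positive lower bound on scalar curvature bounds Perelman's $\nu$-invariant from below. The only point requiring care is the uniformity in $\tau$: one must handle both the small-$\tau$ and large-$\tau$ regimes, the key structural facts being the cancellation of the $\log\tau$ terms in the first regime and the dominance of $c_0\tau$ over $\tfrac n2\log(4\pi\tau)$ in the second.
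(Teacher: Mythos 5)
Your proof is correct, and it establishes the same reduction as the paper — Sobolev $\Rightarrow$ a log-Sobolev estimate $\Rightarrow$ a lower bound on $\mu(g,\tau)$ uniform in $\tau$ — but the two arguments are organized differently in two respects. First, where you derive the entropy bound $\int w^2\log w^2\,dg\le\tfrac n2\log\big(A\|\nabla w\|^2_{L^2}+B\big)$ by hand via Jensen applied to $\log w^{4/(n-2)}$, the paper invokes the known equivalence of the $L^2$ Sobolev inequality with a one-parameter family of log-Sobolev inequalities (Zhang, \emph{Sobolev inequalities, heat kernels under Ricci flow}, Theorem 4.2.1), so your version is more self-contained while the paper's is shorter. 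Second, the mechanism for obtaining uniformity in $\tau$ differs: you minimize explicitly over the Dirichlet energy $x$ and split into the regimes $\tau\le nA/(8B)$ (where the $\log\tau$ terms cancel) and $\tau>nA/(8B)$ (where $c_0\tau$ dominates $\tfrac n2\log(4\pi\tau)$), whereas the paper avoids the case split by first enlarging $A$ until $4B/(Ac_0)\le 1$, so that the problematic additive constant $BA^{-1}\epsilon^2$ can be absorbed directly into the term $\tau\int_M Rv^2\,dg\ge c_0\tau$ already present in Perelman's functional. Both devices exploit exactly the hypothesis $R\ge c_0>0$ and produce the same dependence of $C$ on $(n,A,B,c_0)$; yours is a touch more elementary, the paper's a touch more economical.
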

\begin{proof}
It is well-known that a Sobolev inequality is equivalent to a family of logarithmic Sobolev inequalities; see, for instance, \cite[Theorem 4.2.1]{Zhq11}. 
Hence, for all $\epsilon>0$ and for all $v\in W^{1,2}(M)$ with $\int_Mv^2dg=1$ we have 
\begin{eqnarray*}
\int_Mv^2\log v^2dg\leq \epsilon^2\int_M|\nabla v|^2dg-\frac{n}{2}\log \epsilon^2+BA^{-1}\epsilon^2+\frac{n}{2}\log\frac{nA}{2e}.
\end{eqnarray*}
By the lower bound of $R$, we have $\displaystyle \int_M Rv^2dg\geq c_0$. Furthermore, we can always take $A$ to be larger. Hence we may assume that $4B(Ac_0)^{-1}\leq1$. Then, for all $v\in W^{1,2}(M)$ with $\int_M v^2dg=1$, we have
\begin{eqnarray*}
\int_Mv^2\log v^2dg &\leq& \frac{\epsilon^2}{4}\left(\int_M4|\nabla v^2|dg+ 4BA^{-1}\right)-\frac{n}{2}\log \epsilon^2+\frac{n}{2}\log\frac{nA}{2e}.
\\
&\leq&\frac{\epsilon^2}{4}\left(\int_M4|\nabla v^2|dg+ 4B(Ac_0)^{-1}\int_M Rv^2dg\right)-\frac{n}{2}\log\left( 4\pi\cdot\frac{\epsilon^2}{4}\right)+\frac{n}{2}\log\frac{nA\pi}{2e}.
\\
&\leq&\tau\int_M\left(4|\nabla v|^2+Rv^2\right)dg-\frac{n}{2}\log(4\pi\tau)-n+C(n,A,B,c_0),
\end{eqnarray*}
where we have let $\tau:=\frac{\epsilon^2}{4}$. This obviously is equivalent to
$$\mu(g,\tau)\geq -C(n,A,B,c_0)$$
and the conclusion follows.
\end{proof}

\bibliography{bibliography}{}
\bibliographystyle{amsalpha}

\newcommand{\etalchar}[1]{$^{#1}$}
\providecommand{\bysame}{\leavevmode\hbox to3em{\hrulefill}\thinspace}
\providecommand{\MR}{\relax\ifhmode\unskip\space\fi MR }
\providecommand{\MRhref}[2]{%
  \href{http://www.ams.org/mathscinet-getitem?mr=#1}{#2}
}
\providecommand{\href}[2]{#2}

\bigskip
\bigskip

\noindent Department of Mathematics, University of California, San Diego, CA, 92093
\\ E-mail address: \verb"pachan@ucsd.edu "
\\

\noindent Department of Mathematics, University of California, San Diego, CA, 92093
\\ E-mail address: \verb"zim022@ucsd.edu"
\\

\noindent School of Mathematics, University of Minnesota, Twin Cities, MN, 55414
\\ E-mail address: \verb"zhan7298@umn.edu"

\end{document}